\let\orig@enumerate =\enumerate
\renewenvironment{enumerate}
{\orig@enumerate [label=(\roman{*}), ref=(\roman{*})]}
{\endlist}
\definecolor{grispuce}{gray}{0.7}
\newcommand{\ops}{$\omega$-presque surement ($\omega$-ps)\xspace \renewcommand{\ops}{$\omega$-ps\xspace}}
\newcommand{\oeb}{$\omega$-essentiellement borné  ($\omega$-eb)\xspace \renewcommand{\oeb}{$\omega$-eb\xspace}}
\newtheorem{defi}{Definition}[section]
\newtheorem{lemm}[defi]{Lemma}
\newtheorem{prop}[defi]{Proposition}
\newtheorem{theo}[defi]{Theorem}
\renewcommand{\thesection}{%
	\arabic{section}%
}
\renewcommand{\thesubsection}{%
	\ifnum \value{section}>0
		\thesection.%
	\else%
	\fi%
	\arabic{subsection}%
}
\renewcommand{\thedefi}{%
	\ifnum \value{section}>0
		\thesection.%
	\else%
	\fi%
	\arabic{defi}%
}
\newcommand{\rem}{\paragraph{Remark :}}
\newcommand{\ex}{\paragraph{Example :}}
\newcommand{\N}{\mathbf{N}}
\newcommand{\Z}{\mathbf{Z}}
\newcommand{\Q}{\mathbf{Q}}
\newcommand{\R}{\mathbf{R}}
\newcommand{\C}{\mathbf{C}}
\renewcommand{\H}{\mathbf{H}}
\newcommand{\intval}[2]{\left[#1\, , #2\right]}
\newcommand{\intvald}[2]{\left\{ #1,\dots , #2\right\} }
		\def\ph@corps{\R}
		\def\ph@corps{\C}
		\def\ph@corps{\H}
		\def\ph@corps{}
\newcommand{\HP}[2][]{
	\begingroup
		\setkeys{planhyp}{#1}
		\ifthenelse{\equal{\ph@corps}{}}
			{\mathbf{H}_{#2}}
			{\mathbf{H}_{#2}\left(\ph@corps \right)}
	\endgroup
}
\newcommand*{\dist}[3][]{
	\ifthenelse{\equal{#1}{}}
		{\left| #2- #3 \right|}
		{
			\ifthenelse{\equal{#1}{SC}}
			{\left\| #2- #3 \right\|}
			{\left| #2- #3 \right|_{#1}}
		}
}
\newcommand*{\distV}[1][]{
	\ifthenelse{\equal{#1}{}}
		{\left| \ . \ \right|}
		{
			\ifthenelse{\equal{#1}{SC}}
			{\left\| \ . \ \right\|}
			{\left| \ . \ \right|_{#1}}
		}
}
\newcommand*{\len}[2][]{
	\begingroup
		\setkeys{longueurtrans}{#1}
		\ifKV@longueurtrans@stable {
			\ifthenelse{\equal{\lt@espace}{}}
				{\left[ {#2}\right]^{\infty}}
				{\left[ {#2}\right]^{\infty}_{\lt@espace}}
		}
		\else {
			\ifthenelse{\equal{\lt@espace}{}}
				{\left[ {#2}\right]}
				{\left[ {#2}\right]_{\lt@espace}}
		}
		\fi
	\endgroup
}
\newcommand{\geo}[2]{\left[ #1, #2 \right]}
\newcommand{\rinj}[2]{r_{\textit{inj}}\left(#1,#2\right)}
\newcommand{\stab}[1]{\operatorname{Stab}\left( #1\right)}
\newcommand{\rips}[3][]{
	\ifthenelse{\equal{#1}{}}
		{P_{#2}\left( #3\right)}
		{P_{#2}^{(#1)}\left( #3\right)}
}
\newcommand{\diam}{\operatorname{diam}}
\newcommand{\sdp}[3][]{
	\ifthenelse{\equal{#1}{}}
		{#2 \rtimes #3}
		{#2 \rtimes _{#1} #3}
}
\newcommand{\aut}[1]{\operatorname{Aut} \left( #1\right)}
\newcommand{\out}[1]{\operatorname{Out} \left( #1\right)}
\newcommand{\free}[1]{\mathbf F_{#1}}
\newcommand{\F}{\mathbf F}
\newcommand{\burn}[2]{\mathbf B _{#1}(#2)}
\newcommand{\dlim}{\displaystyle{\lim_{\longrightarrow}}\ }
\renewcommand{\sinh}{\operatorname{sh}}
\renewcommand{\cosh}{\operatorname{ch}}
\newcommand{\id}{\operatorname{id}}
\newcommand{\rinjRF}[1]{r_{\textit{inj}}\left(#1\right)}
\newcommand{\fantomB}{\vphantom{\Big\vert}\!}
\renewcommand{\epsilon}{\varepsilon}
\renewcommand{\phi}{\varphi}
\renewcommand{\leq}{\leqslant}
\renewcommand{\geq}{\geqslant}
\begin{document}

\title{Outer automorphisms of free Burnside groups}
\author{R\'emi Coulon \\ \small{Max-Planck-Institut f\"ur Mathematik}\\\small{Vivatsgasse 7,
53111 Bonn, Germany} \\ \small{\texttt{coulon@mpim-bonn.mpg.de}}}

\maketitle

\abstract{
	In this paper, we study some properties of the outer automorphism group of free Burnside groups of large odd exponent.
	In particular, we prove that it contains free and free abelian subgroups.
}

\tableofcontents 

\section*{Introduction}

\paragraph{}
The free Burnside group of rank $r$ and exponent $n$, denoted by $\burn rn$, is the quotient of the free group $\free r$ by the subgroup $\free r^n$ generated by the $n$-th powers of all its elements.
In 1902, W.~Burnside asked whether $\burn rn$ has to be finite of not (see \cite{Bur02}).
For a long time, one only knows that the answer was positive for some small exponents (for $n=2$ see \cite{Bur02}, $n=3$ \cite{Bur02} and \cite{LevWae33}, $n=4$ \cite{San40} and $n=6$ \cite{Hal57}).
In 1968, P.S.~Novikov and S.I.~Adian achieved a breakthrough (see \cite{NovAdj68}, \cite{NovAdj68a} and \cite{NovAdj68b}).
Using the small cancellation theory, developed by V.A.~Tartakovski\u\i\  \cite{Tar49} and M.~Greendlinger \cite{Gre60}, \cite{Gre60a}, \cite{Gre61}, they proved that for large odd exponents, $\burn rn$ is infinite.
Thanks to a diagrammatic formulation of small cancellation, A.Y.~Ol'shanski\u\i\ simplified in 1982 the proof of P.S.~Novikov and S.I.~Adian \cite{Olc82}.
Recently, T.~Delzant and M.~Gromov, gave a more geometrical proof of the same theorem \cite{DelGro08}.
These works not only provide examples of infinite Burnside groups, they also study many of their properties (solution for the word-problem, description of finite subgroups,...).
Other information about the history of the Burnside problems can be found in \cite{GriLys02}.

\paragraph{}
The next step to understand Burnside groups is to study their automorphisms.
In this paper, we are interested in the following questions.
What kind of outer automorphisms of $\burn rn$ have infinite order?
Does $\out{\burn rn}$ contain relevant subgroups like free groups or free abelian groups?
To that end, we focus on the canonical map $\out{\free r} \rightarrow \out{\burn rn}$.

\paragraph{} Using the work of P.S.~Novikov and S.I.~Adian, E.A.~Cherepanov proved that the automorphism $\phi$ of $\free 2 = \mathbf F(a,b)$, defined by $\phi(a) =b$ and $\phi (b) = ab$, induces an infinite order outer automorphism of $\burn rn$ (see \cite{Che05} and Proposition \ref{theo:infinite order automorphism novikov}).
Our first theorem provides a large class of automorphisms of free group having the same property.

\begin{theo}[see Th. {\ref{theo:infinite order automorphism sc}}]
\label{intro:theo:infinite order automorphism}
	Let $\phi$ be an automorphism of $\free r$.
	Assume that $\phi$ is hyperbolic, i.e. the semi-direct product $\sdp[\phi]{\free r}{\Z}$ defined by $\phi$ is a hyperbolic group.
	There exists an integer $n_0$ such that for all odd integer $n$ larger than $n_0$, $\phi$ induces an infinite order outer automorphism of $\burn rn$.
\end{theo}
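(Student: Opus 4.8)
We may assume $r \geq 2$: for $r = 1$ no automorphism of $\free r$ is hyperbolic, since $\sdp[\phi]{\free 1}{\Z}$ is then virtually $\Z^2$. Write $G := \sdp[\phi]{\free r}{\Z} = \langle \free r, t\rangle$, which is hyperbolic by hypothesis. Since $\free r^n$ is characteristic in $\free r$ it is normal in $G$, and $G/\free r^n = \sdp[\bar\phi]{\burn rn}{\Z}$; more generally, any normal subgroup $N$ of $G$ contained in $\free r$ produces a mapping-torus decomposition $G/N = \sdp{(\free r/N)}{\Z}$. The plan is to realize the approximating groups of $\burn rn$ as such quotients of the hyperbolic group $G$, and then to rule out the $\Z^2$-type configuration that an inner power of $\phi$ would produce.

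Let $\Phi \in \out{\burn rn}$ be the class of $\phi$; then $\Phi$ has infinite order if and only if $\bar\phi^k$ is not inner on $\burn rn$ for every $k \geq 1$. Suppose the construction provides a chain $\free r = G_0 \twoheadrightarrow G_1 \twoheadrightarrow \cdots$ with $G_l = \free r/N_l$, each $N_l \triangleleft G$ and $N_l \subseteq \free r$, with $\bigcup_l N_l = \free r^n$ (so $\burn rn$ is the direct limit of the $G_l$), $\phi$ descending to $\phi_l \in \aut{G_l}$, and $\hat G_l := G/N_l = \sdp[\phi_l]{G_l}{\Z}$. If $\bar\phi^k$ were inner on $\burn rn$, a conjugator $\bar w$ (lifted to $w\in\free r$) together with the finitely many relations saying that $w$ conjugates each of the $r$ generators correctly would already hold in some $G_l$, so $\phi_l^k$ would be inner on $G_l$. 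Thus it suffices to prove: \emph{$\phi_l^k$ is never inner on $G_l$, for every $l \geq 0$ and $k \geq 1$.}

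For the construction itself, I would invoke the geometric construction of $\burn rn$ for $n$ a large odd integer (this is where $n_0$ is produced) as an iterated cone-off / small-cancellation quotient, in the framework where one successively kills $n$-th powers of primitive loxodromic elements of a group acting on a hyperbolic space (Ol'shanskii, Delzant--Gromov, Coulon). One carries the $\Z$-action along by performing the construction \emph{over $G$}: at stage $l$ one kills $n$-th powers $w^n$ of suitable primitive loxodromic elements $w$ in the image of $\free r$ inside $\hat G_{l-1}$, taking the normal closure in $\hat G_{l-1}$. Because $w \in \free r \triangleleft G$, that normal closure stays inside $\free r$ and is normal in $G$, which automatically gives both the mapping-torus form $\hat G_l = \sdp[\phi_l]{G_l}{\Z}$ and the $\phi$-equivariance — one never needs the family of relators to be literally $\phi$-invariant, since $\phi$ is conjugation by $t$ and the relators are killed $G$-normally. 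One then checks that $\bigcup_l N_l = \free r^n$ and, crucially, that each $\hat G_l$ acts acylindrically and non-elementarily on a hyperbolic space $X_l$ with uniformly controlled constants, $t$ remaining loxodromic on $X_l$. This last point is the main obstacle: keeping hyperbolicity and acylindricity of the $X_l$ under control through infinitely many quotients, and checking that $n$-th powers of short elements such as the generators do get killed at some stage — but this is exactly what the quantitative Burnside machinery supplies, and it applies here because the ambient group $G$ is hyperbolic.

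Granting all this, fix $l \geq 0$ and $k \geq 1$ and suppose $\phi_l^k = \iota_v$ is conjugation by some $v \in G_l$. Set $c := v^{-1}t^k \in \hat G_l$. Then for every $x \in G_l$ we get $cxc^{-1} = v^{-1}\phi_l^k(x)v = x$, so $c$ centralizes $G_l$; moreover $c$ has infinite order, since it maps to $k \neq 0$ under $\hat G_l \to \hat G_l / G_l = \Z$. If $l = 0$, then $c$ centralizes $\free r$, which contains a copy of $\Z$, so $\langle c\rangle \times \Z \cong \Z^2 \hookrightarrow G$, contradicting the hyperbolicity of $G$. If $l \geq 1$, note that $G_l$ surjects onto the infinite torsion group $\burn rn$, hence $G_l$ is infinite and not virtually cyclic; being an infinite normal subgroup of the group $\hat G_l$ acting acylindrically and non-elementarily on $X_l$, it acts non-elementarily, so it contains two loxodromic elements $g_1, g_2$ with disjoint limit sets $\Lambda(g_1), \Lambda(g_2) \subseteq \partial X_l$. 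Since $c$ commutes with both $g_1$ and $g_2$, each subgroup $\langle c, g_j\rangle$ is abelian and contains a loxodromic element, hence is virtually cyclic by acylindricity; consequently $c$ is loxodromic with $\Lambda(c) = \Lambda(g_j)$ for $j = 1, 2$, forcing $\Lambda(g_1) = \Lambda(g_2)$ — a contradiction. Therefore $\phi_l^k$ is never inner, and $\phi$ induces an infinite order element of $\out{\burn rn}$.
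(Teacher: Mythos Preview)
Your proposal is correct and takes essentially the same approach as the paper: the result is obtained by applying the Main Theorem to the short exact sequence $1 \to \free r \to \sdp[\phi]{\free r}{\Z} \to \Z \to 1$, and the Main Theorem itself is proved exactly via the iterated small-cancellation tower over $G$ that you sketch, with the same finite-generation argument to push an inner-automorphism relation in $\burn rn$ down to some finite stage $G_l$. The only cosmetic difference is in the final contradiction: where you trap the centralizing element $c$ between two independent loxodromics of $G_l$, the paper instead takes a single hyperbolic $h\in H_k$, uses the small-centralizers hypothesis to force $\langle c,h\rangle$ to be cyclic, and projects the resulting relation $c^p=h^q$ to $F=\Z$ to contradict torsion-freeness.
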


All proofs dealing with free Burnside groups have the same weakness: they involve a presentation of $\burn rn$ which is not stable under automorphisms.
Our work try to regain a little symmetry: we build a sequence of groups $(H_k)$ such that for all $k$, $\phi$ induces an automorphism of $H_k$ and $\dlim H_k = \burn rn$.
To that end, we start with $H_0 = \free r$ and, at each step, we construct $H_{k+1}$ as a small cancellation quotient of $H_k$.
Some difficulties appear during this process.
Assume that $\rho$ is one of the relations defining the first quotient $\free r = H_0 \twoheadrightarrow H_1$. 
Since we want $\phi$ to induce an automorphism of $H_1$, the elements $\phi^m(\rho)$, $m \in \N$, have to belong to the set of relations.
However the small cancellation theory only deals with relations having more or less the same length.
In our case, the relations $\phi^m(\rho)$ may have very different lengths.
To avoid this problem, we encode the information concerning the automorphism in a larger group: $\sdp[\phi]{\free r}{\Z}$.
Thus the elements $\phi^m(\rho)$ become conjugates of $\rho$ and do not need to be added to the set of relations.
We shall now use the fact that the group $\sdp {\free r}{\Z}$ is hyperbolic.
In 1991, A.Y.~Ol'shanski\u\i\  provided indeed a generalisation of the Novikov-Adian theorem (see \cite{Olc91}).
Given a torsion-free, hyperbolic group $G$, he proved that for large odd exponent $n$ the quotient $G/G^n$ is infinite.
This result was recovered by T. Delzant and M. Gromov in \cite{DelGro08}.
We would like to apply the same techniques to $G=\sdp {\free r}{\Z}$. 
However we must take care not to kill all $n$-th powers of $G$.
Indeed, if we did so the automorphism obtained at the end of the construction would have finite order dividing $n$.
That is why we propose an extension of the Delzant-Gromov construction where the relations are chosen in a normal subgroup of $\sdp {\free r}{\Z}$.
This construction works in a more general situation.
It leads to our main theorem

\begin{theo}[Main Theorem]
\label{intro:theo:main theorem}
	Let $1 \rightarrow H \rightarrow G \rightarrow F \rightarrow 1$ be a short exact sequence of groups. 
	Assume that $H$ is finitely generated, $G$ is hyperbolic, torsion-free and $F$ is torsion-free. 
	There exists an integer $n_0$ such that for all odd integers $n$ larger than $n_0$, the canonical map $F \rightarrow \out H$ induces an injective homomorphism $F \hookrightarrow \out{H/H^n}$.
\end{theo}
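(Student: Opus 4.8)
The plan is to rewrite the kernel of $F\to\out{H/H^{n}}$ as the image in $F$ of a suitable centraliser, and then to show that this centraliser sits inside $H/H^{n}$ by means of a relative version of the Burnside construction. Since $H^{n}$ is characteristic in $H$ and $H$ is normal in $G$, the subgroup $H^{n}$ is normal in $G$, and it coincides with the normal closure in $G$ of the $n$-th powers of elements of $H$; in particular $H^{n}\subseteq H$ and there is a short exact sequence $1\to H/H^{n}\to G/H^{n}\to F\to1$. Conjugation in $G$ gives an action of $G$ on $H$ preserving the characteristic subgroup $H^{n}$, hence an action on $H/H^{n}$ in which the elements of $H$ act as inner automorphisms; after passing to $F=G/H$ this is exactly the map $F\to\out{H/H^{n}}$ of the statement. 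An element $f\in F$ lies in its kernel if and only if some lift of $f$ to $G/H^{n}$ centralises $H/H^{n}$, so the kernel is the image in $F$ of $C:=C_{G/H^{n}}(H/H^{n})$, and it suffices to prove $C\subseteq H/H^{n}$. As $G$ is torsion-free, so is $H$; if $H$ is trivial the statement is degenerate, and if $H\cong\Z$, say $H=\langle a\rangle$, then $G$ stabilises the pair of endpoints of $a$ in $\partial G$ and is therefore virtually cyclic, so $G\cong\Z$ and $F$ is trivial. Hence we may assume $H$ is finitely generated, torsion-free, infinite and not virtually cyclic; then $H/H^{n}$ is finitely generated, and --- this being the relative form of the Novikov--Adian theorem supplied by the construction below --- for $n$ odd and large it is \emph{infinite}.

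\paragraph{}
The technical core is a version of the Delzant--Gromov iterated small-cancellation construction in which the defining relators are kept inside the normal subgroup. One builds a tower $G_{0}=G\twoheadrightarrow G_{1}\twoheadrightarrow G_{2}\twoheadrightarrow\cdots$ in which $G_{k+1}$ is a small-cancellation quotient of $G_{k}$ obtained by adjoining the $n$-th powers of finitely many loxodromic elements lying in the image $H_{k}$ of $H$ and of translation length in a window adapted to the hyperbolicity constant of $G_{k}$. One arranges that: (a) each $G_{k}$ is hyperbolic and the injectivity radius of $G_{k}\to G_{k+1}$ tends to infinity; (b) $G_{k}$ surjects onto $F$ with kernel $H_{k}$ (here one uses that the normal closure in $G_{k}$ of $w^{n}$ lies in $H_{k}$ as soon as $w\in H_{k}$); (c) the finite subgroups of $G_{k}$ are cyclic of order dividing $n$; and (d) $\dlim G_{k}=G/H^{n}$, so that $\dlim H_{k}=H/H^{n}$. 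Carrying this out --- in particular, proving that uniform hyperbolicity survives when the relators are confined to $H_{k}$, that the torsion stays under control, and that $H/H^{n}$ does not collapse to a finite group --- will be the bulk of the work, and I expect it to be the main obstacle; the rest is a formal consequence.

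\paragraph{}
Granting the construction, I would argue as follows. Suppose for contradiction that there is $\bar g\in C\setminus(H/H^{n})$, and lift it to $g\in G$. Its image in $F$ is nontrivial, hence of infinite order since $F$ is torsion-free, so $\bar g$ has infinite order in $G/H^{n}$; consequently the image $g_{k}$ of $g$ in each $G_{k}$ has infinite order, i.e.\ is loxodromic. Pick generators $x_{1},\dots,x_{r}$ of $H/H^{n}$, lift them to $\tilde x_{1},\dots,\tilde x_{r}\in H$, and write $x_{i,k}$ for the image of $\tilde x_{i}$ in $G_{k}$. Each of the relations $\tilde x_{i}^{\,n}=1$ and $[\tilde x_{i},g]=1$ holds in $G/H^{n}=\dlim G_{k}$, hence already in $G_{k}$ for every $k$ beyond some index $k_{1}$; fix such a $k$. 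In the hyperbolic group $G_{k}$ the centraliser of the loxodromic element $g_{k}$ is contained in its maximal elementary subgroup $E$; since $n$ is odd, $G_{k}$ has no element of even order, so $E$ contains no infinite dihedral subgroup and is an extension of $\Z$ by a finite normal subgroup $T$, this $T$ being precisely the set of torsion elements of $E$. Each $x_{i,k}$ has order dividing $n$ and commutes with $g_{k}$, so $x_{i,k}\in T$, and $|T|\le n$ by (c). The image of $T$ under the natural map $G_{k}\to G/H^{n}$ is then a subgroup of order at most $n$ containing every $x_{i}$, hence containing $\langle x_{1},\dots,x_{r}\rangle=H/H^{n}$; so $H/H^{n}$ is finite, contradicting its infiniteness. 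Therefore $C\subseteq H/H^{n}$, and the induced map $F\to\out{H/H^{n}}$ is injective.
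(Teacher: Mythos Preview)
Your proposal is correct and follows the paper's strategy: build the tower $G=G_0\twoheadrightarrow G_1\twoheadrightarrow\cdots$ by iterated small cancellation with all relators confined to $H_k$, preserve the exact sequence $1\to H_k\to G_k\to F\to 1$, control the elementary subgroups of $G_k$, and identify $\dlim G_k=G/H^n$. This is exactly the content of the paper's Induction Lemma and the lemmas of Section~\ref{sec:proof main theorem}.

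The only divergence is the closing argument. Given $f\ne1$ with lift $g$ acting as an inner automorphism on $H/H^n$, the paper picks a hyperbolic $h\in H_k$ (using that $H_k$, being a nontrivial normal subgroup of a group with small centralisers, is non-elementary), notes that $\langle l^{-1}g,h\rangle$ is abelian hence cyclic, writes $(l^{-1}g)^p=h^q$ with $p\ne0$, and pushes this to $F$ to get $f^p=1$. You run the same idea from the other side: torsion-freeness of $F$ forces $g_k$ to be loxodromic, its centraliser in $G_k$ is cyclic, the torsion generators $x_{i,k}$ of $H_k$ must lie in its finite part, and hence $H/H^n$ is finite. Both endings rest on the single fact that elementary subgroups of $G_k$ are cyclic, applied to a hyperbolic element on opposite sides of the extension; the paper's version has the mild advantage of not needing infiniteness of $H/H^n$ as a separate input, reaching the contradiction directly in $F$.
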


Theorem \ref{intro:theo:infinite order automorphism} is then an application of the main theorem to the short exact sequence $1 \rightarrow \free r \rightarrow \sdp {\free r}{\Z} \rightarrow \Z \rightarrow 1$.
The work of M.~Bestvina, M.~Feighn and M.~Handel, provides examples of hyperbolic extensions of free groups by free groups.
Using this result we obtain our second theorem.

\begin{theo}[see Th. {\ref{theo:free subgroups}}]
\label{intro:theo:free subgroup}
	Let $r \geq 3$.
	There exists an integer $n_0$ such that for all odd integers $n$ larger than $n_0$, the group $\out{\burn rn}$ contains a subgroup which is isomorphic to $\free 2$.
\end{theo}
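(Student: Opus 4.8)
The plan is to obtain the statement as a direct application of the Main Theorem (Theorem~\ref{intro:theo:main theorem}) to a short exact sequence of the form
\[
	1 \longrightarrow \free r \longrightarrow G \longrightarrow \free 2 \longrightarrow 1 ,
\]
where $G$ is torsion-free and word-hyperbolic. Indeed, once such a $G$ is available, the hypotheses of the Main Theorem hold with $H = \free r$ (which is finitely generated), $F = \free 2$ (which is torsion-free) and $G$ as above; hence there is an integer $n_0$ such that for every odd $n > n_0$ the canonical map $\free 2 \to \out{\free r}$ induces an injection $\free 2 \hookrightarrow \out{\free r / \free r^n} = \out{\burn rn}$, whose image is the desired copy of $\free 2$. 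Note that the canonical map $\free 2 \to \out{\free r}$ attached to the semi-direct product below will in fact already be injective, but this is not needed: the Main Theorem yields the injectivity of the composite directly.

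Thus the whole point is to construct, for every $r \geq 3$, a hyperbolic torsion-free extension of $\free 2$ by $\free r$, and this is where the work of M.~Bestvina, M.~Feighn and M.~Handel enters. For $r \geq 3$ one can choose two fully irreducible automorphisms $\phi_1,\phi_2$ of $\free r$ which are atoroidal (equivalently, hyperbolic in the sense of Theorem~\ref{intro:theo:infinite order automorphism}) and whose attracting and repelling laminations are in ``general position''; the subgroup $\langle \phi_1 , \phi_2 \rangle$ of $\out{\free r}$ is then free of rank $2$. Lifting $\phi_1,\phi_2$ to genuine automorphisms, one forms the semi-direct product $G = \free r \rtimes \free 2$ in which the two free generators of $\free 2$ act through $\phi_1$ and $\phi_2$. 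Being an extension of a torsion-free group by a torsion-free group, $G$ is torsion-free, it sits in the required short exact sequence with quotient $\free 2$, and the Bestvina--Feighn combination theorem (the mapping-torus version, via the flaring condition for the annuli, or ``hallways'', in the associated complex) shows that $G$ is word-hyperbolic. The hypothesis $r \geq 3$ is essential here: in rank $2$ every automorphism preserves the conjugacy class of the commutator $[a,b]$ up to inversion, so no atoroidal automorphism exists, $\free 2 \rtimes \free 2$ always contains a copy of $\Z^2$, and no such $G$ can be hyperbolic — which is exactly why the theorem is stated only for $r\geq 3$.

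The main obstacle is precisely this geometric input: producing two atoroidal fully irreducible automorphisms of $\free r$ with sufficiently independent laminations in every rank $r \geq 3$, and verifying that the rank-$2$ subgroup they generate in $\out{\free r}$ satisfies the flaring hypothesis of the combination theorem, so that the extension $G$ is genuinely hyperbolic. Everything else is formal — torsion-freeness and finite generation are immediate — and all the difficulties specific to Burnside groups, namely controlling the quotient by $H^n$ and keeping the induced $\free 2$-action faithful after passing to it, are absorbed entirely by the Main Theorem.
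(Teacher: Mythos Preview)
Your proposal is correct and follows essentially the same approach as the paper: both invoke the Bestvina--Feighn--Handel result (stated in the paper as Theorem~\ref{theo:hyperbolic extension of free group by free group}) to produce a hyperbolic extension $1 \to \free r \to \sdp{\free r}{\free 2} \to \free 2 \to 1$ and then apply the Main Theorem. The only cosmetic difference is that the paper quotes the BFH theorem as a black box (passing to powers $\phi_1^m,\phi_2^m$ to guarantee freeness and hyperbolicity), whereas you unpack its ingredients (atoroidal, fully irreducible, generic laminations, combination theorem) more explicitly.
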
 

The strategy to embed abelian subgroups in $\out{\burn rn}$ is a little different.
We do not apply the main theorem to an appropriate hyperbolic extension of the free group.
We construct a family of automorphisms of $\free r$ which already commute in $\aut {\free r}$ and check ``by hand'' that they do not satisfy any other relation in $\out{\burn rn}$.
Thus, we obtain the following result.

\begin{theo}[see Th. {\ref{theo:abelian free subgroups}}]
\label{intro:theo:abelian subgroup}
	Let $r \geq 1$.
	There exists an integer $n_0$ such that for all odd integers $n$ larger than $n_0$, the groups $\out{\burn {2r}n}$ and $\out{\burn {2r+1}n}$ contains a subgroup which is isomorphic to $\Z^r$.
\end{theo}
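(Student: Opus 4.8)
The plan is to realise $\Z^r$ inside $\out{\burn{2r}n}$ by a family of automorphisms of $\free{2r}$ that already commute in $\aut{\free{2r}}$ and whose independence persists in the Burnside quotient, which one can verify by an elementary argument rather than through the Main Theorem. Write the basis of $\free{2r}$ as $a_1,b_1,\dots,a_r,b_r$ and let $\phi$ be the automorphism of $\free 2=\mathbf F(a,b)$ given by $\phi(a)=b$, $\phi(b)=ab$. For each $i\in\intvald 1r$, let $\psi_i\in\aut{\free{2r}}$ act as $\phi$ on the free factor $\langle a_i,b_i\rangle$ (under the identification $a_i\leftrightarrow a$, $b_i\leftrightarrow b$) and fix every other basis element. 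Since the $\psi_i$ act on pairwise disjoint free factors, they commute in $\aut{\free{2r}}$; as $\free{2r}^n$ is characteristic, each $\psi_i$ descends to an automorphism of $\burn{2r}n$, and the induced automorphisms still commute. We thus obtain a homomorphism $\Theta\colon\Z^r\to\out{\burn{2r}n}$ carrying $(k_1,\dots,k_r)$ to the outer class of $\Psi:=\psi_1^{k_1}\cdots\psi_r^{k_r}$.

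The device used to separate the coordinates is a retraction onto each block. Let $p_i\colon\free{2r}\to\langle a_i,b_i\rangle\cong\free 2$ be the homomorphism fixing $a_i,b_i$ and killing every other basis element. It sends $n$-th powers to $n$-th powers, hence factors through the Burnside quotients and yields a surjection $\pi_i\colon\burn{2r}n\twoheadrightarrow\burn 2n$. Checking on basis elements, $p_i\circ\psi_i=\phi\circ p_i$ and $p_i\circ\psi_j=p_i$ for $j\ne i$; iterating and composing gives $p_i\circ\Psi=\phi^{k_i}\circ p_i$, which descends to $\pi_i\circ\Psi=\phi^{k_i}\circ\pi_i$ on $\burn{2r}n$. (The same retraction shows, incidentally, that the subgroup of $\burn{2r}n$ generated by the images of $a_i,b_i$ is a copy of $\burn 2n$, though this will not be needed.)

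Now suppose $(k_1,\dots,k_r)\in\ker\Theta$, so that $\Psi$ is inner on $\burn{2r}n$, say $\Psi(x)=gxg^{-1}$ for every $x\in\burn{2r}n$. Given $y\in\burn 2n$, choose $x\in\burn{2r}n$ with $\pi_i(x)=y$; then $\phi^{k_i}(y)=\phi^{k_i}(\pi_i(x))=\pi_i(\Psi(x))=\pi_i(gxg^{-1})=\pi_i(g)\,y\,\pi_i(g)^{-1}$, so $\phi^{k_i}$ is an inner automorphism of $\burn 2n$. By Cherepanov's theorem (Proposition \ref{theo:infinite order automorphism novikov}), or equivalently by Theorem \ref{theo:infinite order automorphism sc} applied to $\phi$ --- whose mapping torus $\sdp[\phi]{\free 2}{\Z}$ is a hyperbolic group --- there is an integer $n_0$ such that for every odd $n>n_0$ the outer class of $\phi$ has infinite order in $\out{\burn 2n}$; hence $k_i=0$. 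Letting $i$ range over $\intvald 1r$ forces $(k_1,\dots,k_r)=0$, so $\Theta$ is injective and $\out{\burn{2r}n}$ contains a copy of $\Z^r$. For odd rank $2r+1$ one introduces an additional basis element $c$, declares every $\psi_i$ to fix it and every $p_i$ to kill it, and runs the same argument, obtaining $\Z^r\hookrightarrow\out{\burn{2r+1}n}$. Moreover $n_0$ is the threshold attached to the single automorphism $\phi$ of $\free 2$ and in particular does not depend on $r$.

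The essential content is thus imported wholesale from Cherepanov's theorem (or Theorem \ref{theo:infinite order automorphism sc}): the only genuine input is the existence of a single infinite-order outer automorphism of $\burn 2n$ induced by an automorphism of $\free 2$. Everything else --- that the block automorphisms commute, that the retractions exist and descend to Burnside quotients, and that $\pi_i\circ\Psi=\phi^{k_i}\circ\pi_i$ holds --- is formal. The one step requiring a little care, and the reason this can be verified ``by hand'', is the check that $p_i$ and the $\psi_i$ respect the subgroup generated by $n$-th powers, which is immediate since homomorphisms and automorphisms carry $n$-th powers to $n$-th powers.
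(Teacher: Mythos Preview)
Your argument is correct in outline and close to the paper's, but there is one genuine error: the parenthetical invoking Theorem~\ref{theo:infinite order automorphism sc} is false. The automorphism $\phi$ of $\free 2$ is \emph{not} hyperbolic --- indeed $\phi^2$ fixes the conjugacy class of $[a^{-1},b^{-1}]$, so $\sdp[\phi]{\free 2}{\Z}$ contains a $\Z^2$ and is not word-hyperbolic; more generally, no automorphism of $\free 2$ is hyperbolic since each one preserves the boundary conjugacy class of the once-punctured torus. Delete that alternative and rely solely on Cherepanov (Proposition~\ref{theo:infinite order automorphism novikov}), and your proof goes through.

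Apart from that slip, your route differs from the paper's in the mechanism used to isolate the $i$-th coordinate. The paper uses the \emph{injection} $F_i/F_i^n \hookrightarrow \burn{2r}n$, established via a Bass--Serre argument about roots in free products (Lemmas~\ref{theo:root in free product}--\ref{theo:injective burnside map}); it then argues that an inner automorphism of a Burnside group has finite order, so by the injection lemma its restriction to each factor has finite order, forcing $k_i=0$. You instead use the \emph{retraction} $p_i\colon\free{2r}\twoheadrightarrow\free 2$ and the intertwining $p_i\circ\Psi=\phi^{k_i}\circ p_i$ to push an inner $\Psi$ down to an inner $\phi^{k_i}$ directly. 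Your approach is a little more elementary --- it avoids the tree argument entirely and never needs the embedding of $\burn 2n$ into $\burn{2r}n$ --- while the paper's injection lemma is a reusable statement of independent interest. Either device suffices; just drop the false appeal to hyperbolicity.
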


\paragraph{} Hyperbolic automorphisms induce infinite order automorphisms of free Burnside groups of large exponent. 
But they are not the only one. 
For instance, the automorphism $\phi$ studied by E.A. Cherepanov, characterized by $\phi(a)=b$ and  $\phi(b)=ab$, is not hyperbolic. 
Indeed, $\phi^2$ fixes the commutator $\left[a^{-1},b^{-1}\right]$.
The semi-direct product $\sdp[\phi]{\free r}{\Z}$ contains therefore a subgroup which is isomorphic to $\Z^2$.
We wonder if there exists a criterion  to decide whether an automorphism of $\free r$ induces a infinite order outer automorphism of $\burn rn$ for some large exponent or not.
In particular, is there a link between this property and the growth of the automorphism?
Section~\ref{section:polynomially growing automorphisms} gives a partial answer.
We prove that a polynomially growing automorphism always induces a finite order automorphism of $\burn rn$.

\paragraph{Outline of the paper.}  In Section \ref{section:automorphisms} we explain the consequences of the main theorem. 
In particular, we provide examples of infinite order automorphisms of $\burn rn$.
We also construct free and free abelian subgroups of $\out{\burn rn}$.
Section \ref{section:small cancellation theory} deals with the proof of the main theorem.
At first, we recall the geometrical point of view on the small cancellation theory developed by T.~Delzant and M.~Gromov.
We also improve some results of \cite{DelGro08} which are necessary to control the small cancellation parameters in our situation.
Then, we prove an induction lemma (Lemma \ref{induction lemma}) which is the fundamental step of the induction process used in Section \ref{sec:proof main theorem} to prove the main theorem.

\paragraph{Acknowledgement.}  
I am grateful to Thomas Delzant for his invaluable help and advice during this work.
I would like to thank Gilbert Levitt for related discussions, in particular concerning the growth of automorphism.
Many thanks also go to  Étienne Ghys who points out many questions to me, like the embedding of free abelian subgroups.

\section{Automorphisms of Burnside groups}
\label{section:automorphisms}

\rem In this paper, we are interested in outer automorphisms of free Burnside groups.
One question we look at is the following:
given an automorphism of the free group $\free r$, does it induce an infinite order automorphism of $\burn rn$ ?
Note that any element of the Burnside group has finite order.
In particular any inner automorphism of $\burn rn$ has finite order.
It follows that an element of $\aut{\burn rn}$ has finite order if and only if its image in $\out{\burn rn}$ has finite order.

\subsection{Examples of infinite order automorphisms}

Using the work of P.S.~Novikov and S.I.~Adian (\cite{NovAdj68}, \cite{NovAdj68a}, \cite{NovAdj68b}), we can produce a first example of an infinite order outer automorphism of Burnside groups.
This example was already studied by E.A.~Cherepanov in \cite{Che05}.

\begin{prop}[see {\cite[Th. 1]{Che05}}]
\label{theo:infinite order automorphism novikov}
Let $\left\{a,b\right\}$ be a generating set of the free group $\free 2$.
Let $\phi$ be the automorphism of $\free 2$ defined by $\phi(a) = ab$ and $\phi(b) =a$. 
There exists an integer $n_0$, such that for all odd integers $n$ larger than $n_0$, $\phi$ induces an infinite order automorphism of $\burn 2n$.
\end{prop}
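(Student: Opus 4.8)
\emph{Proof strategy.} The plan is to first reduce the statement to the non-triviality of one explicit family of words in $\burn 2n$, and then to feed that family to the Novikov--Adian theorem. Suppose, for contradiction, that the image of $\phi$ in $\out{\burn 2n}$ has finite order; then (cf.\ the Remark above) some power $\phi^m$, $m\ge1$, is inner in $\aut{\burn 2n}$, say $\phi^m(x)=gxg^{-1}$ in $\burn 2n$ for every $x$. Iterating the automorphism $x\mapsto gxg^{-1}$ gives $\phi^{mt}(x)=g^txg^{-t}$ for all $t\ge1$, and since $g^n=1$ in $\burn 2n$ we would obtain $\phi^{mn}=\id_{\burn 2n}$. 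Hence it suffices to prove that for every integer $N\ge1$ the automorphism $\phi^N$ does \emph{not} act as the identity on $\burn 2n$; concretely, I would show that $\phi^N(a)\neq a$ in $\burn 2n$.

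Next I would examine the words $f_N:=\phi^N(a)\in\free 2$. Since $\phi$ is a positive substitution ($\phi(a)=ab$, $\phi(b)=a$), $f_N$ is freely reduced, satisfies $f_N=f_{N-1}f_{N-2}$ with $f_0=a$ and $f_{-1}=b$, and the $f_N$ are the finite Fibonacci words $a,\,ab,\,aba,\,abaab,\,abaababa,\dots$; each is a prefix of the infinite Fibonacci word $\mathbf w=abaababaabaab\cdots$. For $N\ge1$, $f_N$ begins with $ab$ and ends with $a$ or $b$ according to the parity of $N$, so that $\phi^N(a)a^{-1}$, represented by $f_Na^{-1}$, is conjugate in $\free 2$ to the word $u_N$ obtained from $f_N$ by deleting its last letter when $N$ is even (free reduction) and its first letter when $N$ is odd (cyclic reduction against the initial letter). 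This $u_N$ is non-empty, since $|u_N|=|f_N|-1\ge1$; it is a factor of $\mathbf w$; and, being a positive word, it is cyclically reduced.

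Now comes the only non-elementary step. It is classical that the Fibonacci word $\mathbf w$ contains no fourth power --- its critical exponent equals $\tfrac{5+\sqrt5}{2}<4$ --- hence neither does the factor $u_N$. By the Novikov--Adian theorem (\cite{NovAdj68}; see also \cite{Olc82}, \cite{Olc91}, \cite{DelGro08}) there is an integer $n_0$ such that for every odd $n\ge n_0$ a non-empty cyclically reduced word of $\free 2$ containing no fourth power represents a non-trivial element of $\burn 2n$ (in fact an element of order $n$). Applied to $u_N$ this gives $\phi^N(a)a^{-1}\neq1$, i.e.\ $\phi^N(a)\neq a$ in $\burn 2n$, for every $N\ge1$; together with the reduction of the first paragraph, the image of $\phi$ in $\out{\burn 2n}$ has infinite order.

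The main obstacle is precisely this last input: controlling which words survive in $\burn 2n$ for large $n$, i.e.\ the fact that words with uniformly bounded powers stay non-trivial there. Everything else is bookkeeping with inner automorphisms and the elementary combinatorics of the Fibonacci word. Note finally that this $\phi$ is \emph{not} covered by Theorem~\ref{intro:theo:infinite order automorphism}: $\phi([a,b])$ is conjugate to $[a,b]^{-1}$, so $\phi^2$ fixes the conjugacy class of $[a,b]$, and hence $\sdp[\phi]{\free 2}{\Z}$ contains a copy of $\Z^2$ and is not hyperbolic --- which is exactly why one must rely on Novikov--Adian here rather than on the main theorem.
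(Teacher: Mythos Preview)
Your proof is correct and follows the same route as the paper: both exploit that the iterates $\phi^N(a)$ are prefixes of the fourth-power-free Fibonacci word and then invoke the Novikov--Adian result to conclude these survive in $\burn 2n$. Your version is slightly more explicit in reducing the problem to the non-triviality of a positive factor $u_N$ of the Fibonacci word, whereas the paper directly asserts that the $\phi^p(a)$ are pairwise distinct in $\burn 2n$; your closing remark that $\phi$ is not hyperbolic is also made in the paper right after the proof.
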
 

\begin{proof}
	We consider the sequence of iterated images of $a$ by $\phi$.
	\begin{displaymath}
		\begin{array}{lclclcl}
		\phi^0(a) & = & a			&\quad 	& \phi^4(a) & = & abaababa\\
		\phi^1(a) & = & ab			&		& \phi^5(a) & = & abaababaabaab\\
		\phi^2(a) & = & aba			&		& \phi^6(a) & = & abaababaabaababaababa\\
		\phi^3(a) & = & abaab		&		& \dots & &\\
		\end{array}
	\end{displaymath}
	This sequence converges to a right infinite positive word 
	\begin{displaymath}
		\phi^\infty(a) =  abaababaabaababaababa\dots
	\end{displaymath}
	which has the following property. 
	For every word $u$ in $\{a,b\}$, $u^4$ is not a subword of $\phi^\infty(a)$ (see \cite{Mos92}).
	Let $n$ be an odd integer larger than 10 000. 
	In order to prove that the free Burnside group of large exponent is infinite, P.S.~Novikov and S.I.~Adian use the following fact : if $m$ is a reduced word in $\{a,b\}$ which does not contain a subword equal to a fourth power, then $m$ defines a non-trivial element of $\burn 2n$ (see \cite[IV. 2.16.]{Adi79} or \cite[Statement 1]{AdiLys92}).
	In particular $\left(\phi^p(a)\right)$ induces a sequence of pairwise distinct elements of $\burn 2n$.
	It follows that $\phi$ induces an infinite order automorphism of $\burn 2n$.
\end{proof}

We are now interested in a large class of automorphisms of the free group: the hyperbolic automorphisms.
We prove that they all induce infinite order automorphisms of the free Burnside group.

\begin{defi}
	Let $G$ be a hyperbolic group.
	An automorphism $\phi$ of $G$ is hyperbolic if the semi-direct product $\sdp[\phi] G\Z$ defined by $\phi$ is hyperbolic. 
\end{defi}

\ex Let $\Sigma$ be the fundamental group of a compact surface $S$ of genus larger than 2.
Thanks to Thurston's hyperbolisation Theorem, any pseudo-Anosov homeomorphism of $S$ induces a hyperbolic automorphism of $\Sigma$ (see \cite{Ota96}).

\paragraph{} There exist many characterizations of hyperbolic automorphisms. 
Assume that $G$ is endowed with the word metric $\distV$ relative to a generating set, M.~Bestvina and M.~Feighn proved in \cite{BesFei92} that an automorphism $\phi$ of $G$ is hyperbolic if and only if there exist $\lambda > 1$ and $m \in \N$ such that for all $g \in G$,
\begin{displaymath}
	\lambda \left|g\right| \leq \max \left\{ \left|\vphantom{\phi^{-m}(g)} \phi^m(g)\right|, \left|\phi^{-m}(g) \right|\right\}.
\end{displaymath}
On the other hand, an automorphism of the free group is hyperbolic, if and only if it has no non-trivial periodic conjugacy classes (see \cite{BesFeiHan97} and \cite{Bri00}).
Note that the automorphism $\phi$ studied in proposition \ref{theo:infinite order automorphism novikov} is not hyperbolic:
$\phi^2$ fixes the commutator $\left[ a^{-1}; b^{-1}\right]$.
More generally, $\aut {\free 2}$ does not contain hyperbolic elements.
Any automorphism $\phi$ of $\free 2$ is indeed induced by a homeomorphism of the punctured torus.
Therefore $\phi$ has to fix the conjugacy class of $\free 2$ corresponding to the boundary of the torus.

\begin{theo}
\label{theo:infinite order automorphism sc}
	Let $r \geq 3$. Let $\phi$ be a hyperbolic automorphism of $\free r$.
	There exists an integer $n_0$, such that for all odd integers $n$ larger than $n_0$, $\phi$ induces an infinite order outer automorphism of $\burn rn$.
\end{theo}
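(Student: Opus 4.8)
The plan is to obtain this as a direct application of the Main Theorem (Theorem~\ref{intro:theo:main theorem}) to a well-chosen short exact sequence. Since $\phi$ is by hypothesis a hyperbolic automorphism of $\free r$, the semi-direct product $G = \sdp[\phi]{\free r}{\Z}$ is a hyperbolic group. It fits into the short exact sequence
\begin{displaymath}
	1 \rightarrow \free r \rightarrow \sdp[\phi]{\free r}{\Z} \rightarrow \Z \rightarrow 1,
\end{displaymath}
in which $H = \free r$ is finitely generated and $F = \Z$ is torsion-free. So the only remaining hypothesis of the Main Theorem to verify is that $G$ is torsion-free: writing an element of $G$ as a pair $(w,k)$ with $w \in \free r$, $k \in \Z$, its $m$-th power has second coordinate $mk$, hence a torsion element must have $k = 0$, and then $(w,0)^m = (w^m,0)$, which is trivial only for $w = 1$ because $\free r$ is torsion-free.

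Applying the Main Theorem then yields an integer $n_0$ such that for every odd $n > n_0$ the canonical map $\Z \rightarrow \out{\free r}$ induces an injective homomorphism $\Z \hookrightarrow \out{\free r/\free r^n}$; here $\free r/\free r^n = \burn rn$ since $\free r^n$ is verbal, hence fully invariant. It remains to identify the relevant maps. The generator $1 \in \Z$ lifts to the stable letter $t \in G$, and conjugation by $t$ on $\free r$ is precisely $\phi$, so the image of $1$ under the canonical map $\Z \rightarrow \out{\free r}$ is the outer class of $\phi$; composing with $\out{\free r} \rightarrow \out{\burn rn}$, injectivity of $\Z \hookrightarrow \out{\burn rn}$ says exactly that the outer class of $\phi$ in $\out{\burn rn}$ has infinite order. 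Finally, by the remark opening Section~\ref{section:automorphisms}, every inner automorphism of $\burn rn$ has finite order, so an element of $\aut{\burn rn}$ has finite order if and only if its image in $\out{\burn rn}$ does; therefore $\phi$ induces an infinite order outer automorphism of $\burn rn$.

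I do not expect a genuine obstacle in this argument, since all the difficulty is already packaged in the Main Theorem, of which the present statement is a straightforward specialization. The only points requiring (minor) care are the torsion-freeness of $\sdp[\phi]{\free r}{\Z}$ and the identification of the canonical map $\Z \to \out{\free r}$ with the one sending $1$ to the class of $\phi$; the hypothesis $r \geq 3$ only serves to guarantee that hyperbolic automorphisms of $\free r$ exist at all, as $\aut{\free 2}$ contains none (see the discussion preceding the statement).
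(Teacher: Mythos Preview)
Your proof is correct and follows essentially the same approach as the paper: apply the Main Theorem to the short exact sequence $1 \rightarrow \free r \rightarrow \sdp[\phi]{\free r}{\Z} \rightarrow \Z \rightarrow 1$ and identify the canonical map $\Z \rightarrow \out{\free r}$ with $m \mapsto [\phi^m]$. You are in fact more careful than the paper in explicitly verifying that $\sdp[\phi]{\free r}{\Z}$ is torsion-free, which the paper simply asserts.
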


\begin{proof}
	By definition, the group $\sdp[\phi] {\free r} \Z$ is hyperbolic.
	It follows that the short exact sequence $1 \rightarrow \free r \rightarrow \sdp[\phi] {\free r} \Z \rightarrow \Z \rightarrow 1$ satisfies the assumptions of the main theorem (see Theorem \ref{intro:theo:main theorem}).
	Thus there exists an integer $n_0$ such that for all odd integers $n$ larger than $n_0$, the map $\Z \rightarrow \out {\free r}$ induces an injective homomorphism $\Z \hookrightarrow \out{\burn rn}$.
	However, the morphism $\Z \rightarrow \out{\free r}$ is by construction the one that associates to an integer $m$ the outer automorphism induced by $\phi^m$.
	Consequently, $\phi$ induces an infinite order outer automorphism of $\burn rn$.
\end{proof}

\subsection{Polynomially growing automorphisms of free groups}
\label{section:polynomially growing automorphisms}
	\paragraph{} We provide now examples of infinite order automorphisms of  $\free r$ which induce finite order automorphisms of $\burn rn$.
	If $x$ is a conjugacy class of $\free r$, we denote by $[x]$ the length of any cyclically reduced word representing $x$.
	Given an outer automorphism $\Phi$ of $\free r$ we look at the action of $\Phi$ on the conjugacy classes of $\free r$.
	
	\begin{defi}
		The automorphism $\Phi$ grows polynomially if for every conjugacy class $x$ of $\free r$, the sequence $\left( \left[\Phi^p (x)\right] \right)$ grows polynomially.
	\end{defi}
	
	\begin{prop}[see {\cite{Lev08}}]
	\label{theo:decomposition polynomial automorphism}
		Let $\Phi$ be a polynomially growing outer automorphism of $\free r$.
		Up to replace $\Phi$ by a power of $\Phi$, one of the following assertion is true.
		\begin{enumerate}
			\item There exist $\phi \in \aut{\free r}$ representing $\Phi$ and a non-trivial free decomposition $F_1 * F_2$ of $\free r$ which is invariant under $\phi$.
			\item There exist $\phi \in \aut{\free r}$ representing $\Phi$, a non-trivial free decomposition $F_1*\left< t \right>$ of $\free r$ and an element $f$ of $F_1$ such that $F_1$ is invariant under $\phi$ and $\phi(t) = tf$.
		\end{enumerate}
	\end{prop}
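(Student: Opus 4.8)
The plan is to reduce to the class of \emph{unipotent} polynomially growing automorphisms, for which a rigid normal form is available, and then to read off the two alternatives from the top stratum of that form. First I would analyse the action of $\Phi$ on $H_1(\free r;\Z)\cong\Z^r$, which is given by a matrix $M\in\mathrm{GL}_r(\Z)$: since the abelianisation map $\free r\to\Z^r$ is Lipschitz for the word metrics, an eigenvalue of $M$ of modulus $>1$ would force some conjugacy class (one whose homology class lies in a rational $M$-invariant subspace on which $M$ expands) to grow exponentially under $\Phi$. Hence every eigenvalue of $M$ has modulus $\leq1$; being algebraic integers with $\left|\det M\right|=1$ they are roots of unity by Kronecker's theorem, so for a suitable $k\geq1$ the matrix $M^k$ has characteristic polynomial $(x-1)^r$ and is therefore unipotent. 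Replacing $\Phi$ by $\Phi^k$, I may assume $\Phi$ is UPG, i.e. polynomially growing and unipotent on homology.

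Next I would invoke the structure theorem of Bestvina--Feighn--Handel for UPG automorphisms: such a $\Phi$ has a representative $f\colon G\to G$ on a marked graph with $\pi_1 G\cong\free r$, together with a filtration $\emptyset=G_0\subset G_1\subset\cdots\subset G_N=G$ in which $G_i$ is obtained from $G_{i-1}$ by attaching a single edge $e_i$, where $f$ fixes every vertex and $f(e_i)=e_i u_i$ with $u_i$ a closed path in $G_{i-1}$. In particular $f(G_i)\subseteq G_i$ and $f$ restricts to a homotopy equivalence of each $G_i$, and one may further take $f$ reduced, with no valence-one vertices. The decomposition of $\free r$ will come from the top edge $e_N$ and the subgraph $G'$ obtained from $G$ by deleting the interior of $e_N$, that is $G'=G_{N-1}$.

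The final step is a dichotomy according to whether $e_N$ separates $G$. If $G'$ is connected, then $\pi_1 G'$ is a corank-one free factor of $\pi_1 G$; writing $t$ for the class of a loop running once over $e_N$ and closing up inside $G'$, one gets $\free r=F_1*\langle t\rangle$ with $F_1=\pi_1 G'$, which is $\Phi$-invariant because $f(G')\subseteq G'$ and $f|_{G'}$ is a homotopy equivalence, and from $f(e_N)=e_N u_N$ with $u_N\subset G'$ (and $f$ vertex-fixing) one computes directly that $\phi(t)=t\,w$ for an explicit element $w\in F_1$ --- this is alternative~(ii), nontrivial when $r\geq2$ (the case $r=1$ being degenerate). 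If instead $e_N$ separates, say $G'=G'_1\sqcup G'_2$, then $e_N$ is a bridge and $\free r=\pi_1 G'_1*\pi_1 G'_2$; since each $G'_j$ is connected, $f$ permutes $\{G'_1,G'_2\}$, and after replacing $\Phi$ by $\Phi^2$ it preserves each side, which gives a $\Phi$-invariant free product --- alternative~(i). Reducedness rules out a rank-zero factor, since a finite tree has at least two valence-one vertices while at most one vertex of $G'_j$ meets $e_N$; so both factors are non-trivial.

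I expect the whole weight of the argument to sit in the second step: the existence of the triangular relative train track representative for UPG automorphisms is the substantial theorem --- the heart of the relative train track machinery of Bestvina--Handel and Bestvina--Feighn--Handel --- whereas the reduction on homology and the reading-off of the two cases are short. The points that still need care are the bookkeeping in the separating case: excluding a rank-zero factor (handled by using a reduced representative) and absorbing a possible exchange of the two sides into a further power of $\Phi$; and, in case~(ii), checking that $w$ genuinely lies in $F_1$ rather than only in $\pi_1 G$, which is immediate from $u_N\subset G'$ together with the fact that $f$ fixes the endpoints of $e_N$.
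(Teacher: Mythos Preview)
The paper does not prove this proposition at all: it is stated with the attribution ``see \cite{Lev08}'' and used as a black box. There is therefore no proof in the paper to compare against.

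That said, your outline is sound. The reduction to the UPG case via Kronecker's theorem is standard and correct, and the Bestvina--Feighn--Handel normal form for UPG representatives (each stratum a single edge $e_i$ with $f(e_i)=e_i u_i$, $f$ fixing vertices) is exactly the right tool; this is indeed the substantial input. Two small remarks on the dichotomy step. First, since $f$ fixes every vertex in the BFH normal form, the components of $G_{N-1}$ are automatically preserved rather than permuted, so the passage to $\Phi^2$ in the separating case is unnecessary (though harmless). Second, your exclusion of a rank-zero factor via ``a finite tree has at least two valence-one vertices'' misses the case where $G'_j$ is a single vertex; but then that vertex has valence one in $G$, which is already forbidden, so the conclusion stands. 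The verification in case~(ii) that $\phi(t)=tw$ with $w\in F_1$ is correct once one fixes the basepoint at the initial vertex of $e_N$ and writes $t=e_N\gamma$ for a path $\gamma$ in $G_{N-1}$, exactly as you indicate.
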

	
	\begin{theo}
		Let $r \geq 1$.
		Let $\Phi$ be a polynomially growing outer automorphism of $\free r$.
		For all positive integers $n$, $\Phi$ induces a finite order outer automorphism of $\burn rn$.
	\end{theo}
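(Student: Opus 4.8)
The plan is to prove the statement by induction on $r$, with Proposition~\ref{theo:decomposition polynomial automorphism} as the only structural input. By the remark at the beginning of Section~\ref{section:automorphisms}, an element of the automorphism group of a free Burnside group has finite order if and only if its image in the outer automorphism group does; in particular the conclusion is unaffected if one replaces $\Phi$ by any of its powers (if $\Phi^m$ has finite order in $\out{\burn rn}$, so does $\Phi$). After such a replacement we may therefore fix a representative $\phi\in\aut{\free r}$ of a power of $\Phi$ together with a $\phi$-invariant free decomposition of $\free r$ of one of the two types in Proposition~\ref{theo:decomposition polynomial automorphism}; in both types the free factors involved have rank strictly smaller than $r$. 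For $r=1$ the group $\out{\burn 1n}=\aut{\Z/n\Z}$ is finite and there is nothing to prove, so assume $r\geq 2$ and that the statement holds in all smaller ranks.

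Two elementary facts carry the argument. First, if $F_1$ is a free factor of $\free r$ that is invariant under $\phi$, then the composite $F_1\hookrightarrow\free r\twoheadrightarrow\burn rn$ identifies $F_1/F_1^n$ with the subgroup $\overline{F_1}$ of $\burn rn$ generated by the images of a basis of $F_1$: a retraction $\free r\to F_1$ onto this factor maps $\free r^n$ into $F_1^n$, hence $F_1\cap\free r^n=F_1^n$. Consequently $\overline{F_1}$ is itself a free Burnside group of rank $\operatorname{rank}(F_1)$, and the automorphism it inherits from $\phi$ is exactly the one induced by $\phi|_{F_1}$. Second, $\phi|_{F_1}$ is again polynomially growing on $F_1$: any conjugacy class of $F_1$ has the same cyclically reduced length whether computed in $F_1$ or in $\free r$ (a word cyclically reduced over a basis of the factor stays cyclically reduced over the corresponding basis of $\free r$), and $\phi$ acts on these classes through $\phi|_{F_1}$, so $\phi|_{F_1}$ stretches conjugacy lengths at most polynomially. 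By the induction hypothesis $\phi|_{F_1}$ therefore induces a finite order automorphism of $\overline{F_1}$, i.e. some power $(\phi|_{F_1})^N$ acts as the identity on $\overline{F_1}$.

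It remains to dispatch the two alternatives of Proposition~\ref{theo:decomposition polynomial automorphism}, letting $\bar\phi$ denote the automorphism of $\burn rn$ induced by $\phi$. In the first, $\free r=F_1*F_2$ with $\phi(F_i)=F_i$ (replace $\phi$ by $\phi^2$ if $\phi$ merely preserves the unordered pair $\{F_1,F_2\}$). Choose $N$ so that $\phi^N$ acts as the identity on both $\overline{F_1}$ and $\overline{F_2}$; then $\bar\phi^N$ is the identity on the generating set $\overline{F_1}\cup\overline{F_2}$ of $\burn rn$, so $\bar\phi^N=\id$ and $\phi$ induces a finite order automorphism. In the second, $\free r=F_1*\langle t\rangle$ with $\phi(F_1)=F_1$ and $\phi(t)=tf$, $f\in F_1$; then $\phi^k(t)=t\,w_k$ with $w_k=f\,\phi(f)\cdots\phi^{k-1}(f)\in F_1$. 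Pick $N$ so that $\phi^N$ acts trivially on $\overline{F_1}$ and set $\psi=\phi^N$. The induced automorphism $\bar\psi$ fixes $\overline{F_1}$ pointwise and sends $\bar t$ to $\bar t\,\bar w_N$ with $\bar w_N\in\overline{F_1}$; since $\bar\psi$ fixes $\bar w_N$, we get $\bar\psi^k(\bar t)=\bar t\,\bar w_N^{\,k}$ for all $k$. As $\burn rn$ has exponent $n$, $\bar\psi^{\,n}$ fixes $\bar t$, and it still fixes $\overline{F_1}$ pointwise; since $\overline{F_1}$ together with $\bar t$ generate $\burn rn$, we get $\bar\psi^{\,n}=\id$, and again $\phi$ induces a finite order automorphism. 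This closes the induction.

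The whole argument is short once Proposition~\ref{theo:decomposition polynomial automorphism} is granted; the only steps that require genuine care are the two facts of the second paragraph — that an invariant free factor descends to a free Burnside group of the same rank (the retraction argument) and that restricting a polynomially growing automorphism to an invariant free factor preserves polynomial growth (the cyclically reduced length comparison) — together with the bookkeeping of repeatedly replacing $\phi$ by a power. The one substantial ingredient, Proposition~\ref{theo:decomposition polynomial automorphism} (Levitt's structure theorem for polynomially growing outer automorphisms of free groups), is cited rather than reproved.
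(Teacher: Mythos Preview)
Your proof is correct and follows the same inductive strategy as the paper's proof: both split according to the two cases of Proposition~\ref{theo:decomposition polynomial automorphism}, apply the induction hypothesis to the restriction on the smaller free factor(s), and in the second case exploit that the $t$-orbit becomes periodic of period dividing $n$ once the restriction to $F_1$ has been made trivial. Your argument is somewhat more explicit than the paper's in two respects --- you justify that the restriction $\phi|_{F_1}$ is again polynomially growing (which the paper uses without comment), and you spell out via a retraction that $\overline{F_1}\cong F_1/F_1^n$ (though in fact only the trivial inclusion $F_1^n\subset\free r^n$ is needed here) --- but these are refinements of the same proof rather than a different approach.
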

	
	\begin{proof}
		We prove this result by induction on the rank $r$ of the free group.
		The outer automorphism group of $\Z$ is trivial. 
		Hence the theorem is true for rank one.
		Let $r \geq 1$.
		We assume now that the theorem is true for any rank that is smaller or equal to $r$.
		Let $\Phi$ be a polynomially growing outer automorphism of $\free {r+1}$ and $n$ a positive integer.
		Due to Proposition \ref{theo:decomposition polynomial automorphism}, we distinguish two cases.
		
		\paragraph{First case.} There exist an automorphism $\phi \in \aut{\free {r+1}}$ representing a power of $\Phi$  and a non-trivial free decomposition $F_1*F_2$ of $\free {r+1}$ invariant under $\phi$.
		We denote by $\phi_i$ the restriction of $\phi$ to $F_i$.
		By induction, there exists an integer $p_i$ such that $\phi_i^{p_i}$ induces the identity of $F_i /F_i^n$.
		It follows that $\phi^{p_1p_2}$ is trivial in $\aut{\burn {r+1}n}$.
		Therefore $\Phi$ induces a finite order outer automorphism of $\burn {r+1}n$.
		
		\paragraph{Second case.} There exist an automorphism $\phi \in \aut{\free {r+1}}$ representing a power of $\Phi$, a free decomposition $F_1 * \left<t \right>$ of $\free {r+1}$ and an element $f$ of $\F_1$ such that $F_1$ is invariant under $\phi$ and $\phi(t) = tf$.
		We denote by $\phi_1$ the restriction of $\phi$ to $F_1$. 
		By induction, there exists an integer $p_1$ such that $\phi_1^{p_1}$ induces the identity of $F_1/F_1^n$.
		On the other hand, for all integers $q$, $\phi^q(t)$ is equal to 
		\begin{math}
			tf \phi_1(f) \phi_1^2(f) \dots \phi_1^{q-1}(f)
		\end{math}.
		It follows that the below equality holds in $\burn {r+1}n$.
		\begin{displaymath}
			\phi^{np_1}(t) = t \left[ f \phi_1(f)\phi_1^2(f) \dots \phi_1^{p_1-1}(f)\right]^n = t
		\end{displaymath}
		Hence $\phi^{np_1}$ is trivial in $\aut{\burn {r+1}n}$.
		Therefore $\Phi$ induces a finite order outer automorphism of $\burn {r+1}n$.
	\end{proof}
	
\subsection{Subgroups of $\out{\burn rn}$}

We are now interested in relevant subgroups that can be embedded in $\out{\burn rn}$.
We start with free subgroups.
The following result is due to M.~Bestvina, M.~Feighn and M.~Handel

\begin{theo}[see {\cite[Th. 5.2]{BesFeiHan97}}]
\label{theo:hyperbolic extension of free group by free group}
	Let $r\geq 3$.
	Let $\phi_1$ and $\phi_2$ two automorphisms of $\free r$.
	We assume that the outer automorphisms induced by $\phi_1$ and $\phi_2$ are irreducible, do not have common powers and neither have a nontrivial periodic conjugacy class.
	There exists an integer $m$ such that $\phi_1^m$ and $\phi_2^m$ generate a free group.
	Moreover, the semi-direct product $\sdp {\free r}{\free 2}$ defined by $\phi_1^m$ and $\phi_2^m$ is hyperbolic.
\end{theo}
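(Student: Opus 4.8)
\emph{Proof sketch (proposal).}
The statement combines two classical tools: a ping--pong argument in $\out{\free r}$ to produce the free group, and the Bestvina--Feighn combination theorem to make the associated mapping torus hyperbolic. Write $\overline\psi$ for the outer class of an automorphism $\psi$ of $\free r$.

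\emph{Step 1: the free group.}
An irreducible outer automorphism of $\free r$ with no nontrivial periodic conjugacy class acts with North--South dynamics on the space $\mathbb{P}\mathrm{Curr}(\free r)$ of projectivised geodesic currents of $\free r$ (this is the point of view developed in \cite{BesFeiHan97} in the language of attracting and repelling laminations); let $\mu_i^{+}$ and $\mu_i^{-}$ be the attracting and repelling fixed points of $\overline{\phi_i}$. Since $\overline{\phi_1}$ and $\overline{\phi_2}$ have no common power, the four points $\mu_1^{\pm},\mu_2^{\pm}$ are pairwise distinct. Choosing pairwise disjoint neighbourhoods and applying the Tits ping--pong lemma, one gets an integer $m_0$ such that for every multiple $m$ of $m_0$ the elements $\overline{\phi_1^{\,m}}$ and $\overline{\phi_2^{\,m}}$ freely generate a rank-$2$ free subgroup of $\out{\free r}$. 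In particular the homomorphism $\free 2 \to \out{\free r}$ sending the two generators to $\overline{\phi_1^{\,m}}$ and $\overline{\phi_2^{\,m}}$ is injective; as it factors through the homomorphism $\free 2 \to \aut{\free r}$ sending the generators to $\phi_1^{\,m}$ and $\phi_2^{\,m}$, the latter is injective too, so $\phi_1^{\,m}$ and $\phi_2^{\,m}$ generate a free group of rank $2$.

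\emph{Step 2: the mapping torus acts on a tree.}
Let $G=\sdp{\free r}{\free 2}$ be the semi-direct product defined by $\phi_1^{\,m}$ and $\phi_2^{\,m}$. The quotient map $G\to\free 2$, with kernel $\free r$, lets $G$ act on the Cayley tree $T$ of $\free 2$; since $\free 2$ acts freely on $T$, every vertex and edge stabiliser of the $G$-action is a conjugate of $\free r$, hence quasi-isometrically embedded in $G$. By the Bestvina--Feighn combination theorem \cite{BesFei92} (together with its addendum), $G$ is hyperbolic provided the action satisfies the hallways/annuli flare condition: there are $\lambda>1$ and constants so that for every reduced bi-infinite word $\dots w_{-1}w_0w_1\dots$ in the letters $\phi_1^{\pm m},\phi_2^{\pm m}$ and every nontrivial conjugacy class $c$ of $\free r$, the length $\bigl[(w_k\cdots w_1)(c)\bigr]$ grows exponentially as $k\to\pm\infty$ away from its minimum.

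\emph{Step 3 (the main obstacle): uniform exponential flaring.}
Verifying this flare condition is the heart of the matter, and I expect it to be the real difficulty. First one checks that the ping--pong dynamics descends to the whole free group: enlarging $m$ if necessary, every nontrivial $\psi\in\langle\phi_1^{\,m},\phi_2^{\,m}\rangle$, written as a cyclically reduced word, has the property that $\psi^N$ acts with North--South dynamics on $\mathbb{P}\mathrm{Curr}(\free r)$ for large $N$, its two fixed points lying inside the ping--pong neighbourhoods; hence $\overline\psi$ is again irreducible, and its fixed laminations are arational --- were one of them carried by a single conjugacy class, that class would be periodic for $\overline\psi$, and tracing through the ping--pong sets would manufacture a periodic conjugacy class for $\overline{\phi_1}$ or $\overline{\phi_2}$, against the hypothesis. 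So each such $\psi$ is a hyperbolic automorphism, which by the Bestvina--Feighn criterion recalled above (see \cite{BesFei92}, and \cite{Bri00} for the equivalence with the absence of periodic conjugacy classes) is precisely the exponential flaring of $k\mapsto[\psi^k(c)]$ for every nontrivial $c$. It then remains to upgrade this flaring along the \emph{periodic} edge-paths of $T$ to a \emph{uniform} flaring along all bi-infinite edge-paths: one shows the flare constants may be taken uniform over the finitely many letter-types, and deals with non-periodic words by a compactness/attraction argument in $\mathbb{P}\mathrm{Curr}(\free r)$ (a long reduced prefix pushes any conjugacy class deep into an attracting region, where its length has already grown exponentially). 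Granting the flare condition, the combination theorem gives that $G=\sdp{\free r}{\free 2}$ is hyperbolic; combined with Step 1 this proves the theorem.
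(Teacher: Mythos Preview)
The paper does not prove this theorem: it is quoted verbatim from \cite[Th.~5.2]{BesFeiHan97} and used as a black box to feed into the main theorem. So there is no ``paper's own proof'' to compare against; the only relevant benchmark is the original Bestvina--Feighn--Handel argument.

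Your sketch follows exactly the architecture of that original proof: ping--pong on the attracting/repelling laminations (or currents) to produce the free subgroup, then the Bestvina--Feighn combination theorem applied to the action on the Bass--Serre tree of $\free 2$, with the annuli flare condition as the technical core. Two remarks. First, in Step~1 the assertion that the four fixed points $\mu_1^\pm,\mu_2^\pm$ are pairwise distinct is not an immediate consequence of ``no common power''; one must rule out, for instance, that the attracting lamination of $\overline{\phi_1}$ coincides with the repelling lamination of $\overline{\phi_2}$, and this uses the uniqueness properties of attracting laminations for fully irreducible automorphisms established in \cite{BesFeiHan97}. Second, and more importantly, your Step~3 is honest about being the crux, but the passage from ``each individual $\psi$ is hyperbolic'' to ``uniform flaring along all bi-infinite edge-paths'' is precisely where the work in \cite{BesFeiHan97} lies, and your compactness/attraction paragraph is a plausible heuristic rather than an argument---the delicate point is controlling how quickly a conjugacy class is pulled into an attracting neighbourhood, uniformly in the initial class and in the word. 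As a proof \emph{proposal} this is a fair roadmap of the BFH strategy; as a proof it would need that step fleshed out or simply replaced by the citation, which is what the present paper does.
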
	

\begin{theo}
\label{theo:free subgroups}
	Let $r\geq 3$.
	There exists  an integer $n_0$, such that for all odd integers $n$ larger than $n_0$, $\out{\burn rn}$ contains a subgroup which is  isomorphic to $\free 2$.
\end{theo}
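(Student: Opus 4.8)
The plan is to realise $\burn rn=\free r/\free r^n$ as $H/H^n$ for a short exact sequence $1\to H\to G\to F\to 1$ satisfying the hypotheses of the Main Theorem (Theorem~\ref{intro:theo:main theorem}) with $H=\free r$ and $F\cong\free 2$, and then simply to read off the conclusion. The extension itself will be furnished by Theorem~\ref{theo:hyperbolic extension of free group by free group}. So fix $r\geq 3$.

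First I would produce two automorphisms $\phi_1,\phi_2$ of $\free r$ to which Theorem~\ref{theo:hyperbolic extension of free group by free group} applies, i.e.\ whose outer classes are irreducible, have no non-trivial periodic conjugacy class, and admit no common power. Such pairs exist precisely because $r\geq 3$: one takes an atoroidal fully irreducible automorphism of $\free r$ --- the mapping torus of such an automorphism is a hyperbolic group, and these automorphisms are produced by the train-track machinery underlying \cite{BesFeiHan97} --- and for $\phi_2$ a generic automorphism of the same kind; two such automorphisms can share a power only if their attracting laminations coincide, a coincidence one arranges to avoid. This is the only place where the hypothesis $r\geq 3$ enters, and it is essential: as recalled above, every automorphism of $\free 2$ has a periodic conjugacy class, so the construction genuinely fails for $r=2$.

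Next, Theorem~\ref{theo:hyperbolic extension of free group by free group} gives an integer $m$ such that $\psi_1=\phi_1^m$ and $\psi_2=\phi_2^m$ generate a free subgroup of $\aut{\free r}$ and the associated semi-direct product $G=\sdp{\free r}{\free 2}$ is hyperbolic. Since $\phi_1$ and $\phi_2$ have no common power, $\langle\psi_1,\psi_2\rangle$ is non-cyclic, hence free of rank exactly $2$; call this subgroup $F\cong\free 2$. We thus obtain a short exact sequence $1\to\free r\to G\to F\to 1$. Here $H=\free r$ is finitely generated, $F=\free 2$ is torsion-free, and $G$ is torsion-free: a finite-order element of $G$ maps to a finite-order element of $F$, hence to $1$, so it lies in $\free r$ and is therefore trivial. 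Hence all hypotheses of the Main Theorem are met, and the canonical map $F\to\out{\free r}$ is the one sending $\psi_i$ to its outer class.

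Applying the Main Theorem then yields an integer $n_0$ (depending on this extension, hence ultimately on $r$) such that for every odd $n>n_0$ the canonical map $F\to\out{\free r}$ induces an injective homomorphism $F\hookrightarrow\out{\free r/\free r^n}=\out{\burn rn}$. Since $F\cong\free 2$, this exhibits a subgroup of $\out{\burn rn}$ isomorphic to $\free 2$, which is exactly the claim. Once the Main Theorem and Theorem~\ref{theo:hyperbolic extension of free group by free group} are granted, the only genuine content is the first step, namely checking that the Bestvina--Feighn--Handel hypotheses can really be realised in $\free r$ for $r\geq 3$ --- the existence of two atoroidal fully irreducible automorphisms with no common power; the remaining steps are routine verifications of the hypotheses of the Main Theorem.
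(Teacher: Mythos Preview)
Your proof is correct and follows exactly the same route as the paper: invoke Theorem~\ref{theo:hyperbolic extension of free group by free group} to produce a hyperbolic extension $1\to\free r\to\sdp{\free r}{\free 2}\to\free 2\to 1$, then apply the Main Theorem. The paper's own proof is two sentences long and simply cites these two results; you have written out the same argument with the routine verifications (torsion-freeness of $G$, existence of a pair $\phi_1,\phi_2$ meeting the Bestvina--Feighn--Handel hypotheses) made explicit.
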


\begin{proof}
	Theorem \ref{theo:hyperbolic extension of free group by free group} provides a hyperbolic extension of $\free r$ by $\free 2$.
	In other words, $1 \rightarrow \free r \rightarrow \sdp{\free r}{\free2} \rightarrow \free 2 \rightarrow 1$ is a short exact sequence such that $\sdp{\free r}{\free 2}$ is hyperbolic.
	The result follows from the main theorem (see Theorem \ref{intro:theo:main theorem}).
\end{proof}

\paragraph{} We are now looking for free abelian subgroups of $\out{\burn rn}$.
Let $G_1$ and $G_2$ be two torsion-free groups.
We denote by $G$ the free product $G_1 * G_2$.
Since $G_1$ and $G_2$ are torsion-free, so is $G$ (see \cite{Ser77}).

\begin{lemm}
\label{theo:root in free product}
	Let $g$ be a non-trivial element of $G$.
	If there exists a positive integer $k$ such that $g^k$ belongs to $G_1$, then $g$ belongs to $G_1$.
\end{lemm}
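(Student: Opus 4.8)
\noindent\emph{Proof sketch.} The plan is to argue entirely with the normal form in the free product $G = G_1 * G_2$. Recall (see \cite{Ser77}) that every $w \in G$ has a well-defined syllable length $\lvert w\rvert$ coming from its reduced form, that $w$ is called cyclically reduced when its reduced form cannot be shortened by conjugation, that every element of $G$ is conjugate to a cyclically reduced one, and that the cyclically reduced conjugate of $w$ is unique up to cyclic permutation of its syllables; in particular, two conjugate cyclically reduced elements have the same syllable length. First I would write $g = u h u^{-1}$ with $h$ cyclically reduced; since $g \neq 1$ we also have $h \neq 1$. Then I would split the argument according to whether $\lvert h\rvert \geq 2$ or $\lvert h\rvert = 1$.

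If $\lvert h\rvert \geq 2$, write the reduced form $h = s_1 \cdots s_m$; since $h$ is cyclically reduced, $s_1$ and $s_m$ lie in different free factors, so for every $k \geq 1$ no cancellation or amalgamation occurs at the junctions of $h^k = s_1 \cdots s_m s_1 \cdots s_m \cdots s_1 \cdots s_m$. Hence $h^k$ is cyclically reduced of syllable length $km \geq 2$. As $g^k = u h^k u^{-1}$ is conjugate to $h^k$, its cyclically reduced conjugate has syllable length $\geq 2$; but $g^k \in G_1$ is already cyclically reduced with $\lvert g^k\rvert \leq 1$, a contradiction. So this case cannot occur.

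If $\lvert h\rvert = 1$, then $h \in G_i \setminus\{1\}$ for $i = 1$ or $i = 2$. By repeatedly absorbing into $h$ a final syllable of $u$ that lies in $G_i$ (which replaces $h$ by a conjugate of it inside $G_i$, hence keeps it nontrivial, and shortens $u$), I may assume that $u = 1$ or that the last syllable of $u$ lies in the factor other than $G_i$. In the latter situation, torsion-freeness of $G_i$ gives $h^k \neq 1$, and no cancellation occurs in $u h^k u^{-1}$, so $\lvert g^k\rvert = 2\lvert u\rvert + 1 \geq 3$, contradicting $g^k \in G_1$. Hence $u = 1$ and $g = h \in G_i$. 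If $i = 2$, then $g^k \in G_1 \cap G_2 = \{1\}$, so $g^k = 1$, and torsion-freeness of $G_2$ forces $g = 1$, against the hypothesis. Therefore $i = 1$ and $g \in G_1$, as wanted.

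The only genuinely delicate point is the bookkeeping in the case $\lvert h\rvert = 1$: one must choose the conjugator $u$ so that passing to the $k$-th power triggers no collapse in $u h^k u^{-1}$, and it is exactly there that the torsion-freeness hypotheses on $G_1$ and $G_2$ are used, through $h^k \neq 1$. Everything else is a routine application of the normal-form and conjugacy theorems for free products.
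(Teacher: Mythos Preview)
Your proof is correct, but it follows a different route from the paper's. The paper argues via Bass--Serre theory: $G = G_1 * G_2$ acts on its Bass--Serre tree $T$ with trivial edge stabilizers, the vertex $x$ with stabilizer $G_1$ is the unique fixed point of the nontrivial element $g^k$, hence $g$ is elliptic and its fixed point must be $x$ (else $g^k$ fixes an edge), so $g \in \stab x = G_1$. Your argument instead works directly with the normal-form and conjugacy theorems for free products, reducing $g$ to a cyclically reduced conjugate and computing syllable lengths. Your approach is more elementary in that it avoids the tree machinery, at the cost of a slightly longer case analysis and the explicit absorption step for the conjugator; the tree argument is shorter and more conceptual, and makes the role of torsion-freeness (forcing $g^k \neq 1$, hence a unique fixed vertex) particularly transparent. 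Both use the torsion-freeness hypothesis in essentially the same place.
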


\begin{proof}
	We use the Bass-Serre theory of groups acting on trees (see \cite{Ser77}).
	There exist a simplicial tree $T$ and a simplicial action without inversion of $G$ on $T$ satisfying the following properties.
	The stabilizers of the vertices are the conjugates of $G_1$ and $G_2$.
	The stabilizers of the edges are trivial.
	Since $G$ is torsion-free, $g^k$ is a non-trivial element of $G_1$. 
	In particular it fixes a unique point $x$ of $T$: the one whose stabilizer is $G_1$.
	Therefore $g$ is an elliptic isometry and fixes a point of $T$ (see \cite[Chap. 9, Cor.~3.2]{CooDelPap90}).
	This last point has to be $x$, otherwise $g^k$ would fix two distinct points of $T$.
	Consequently $g$ belongs to the stabilizer of $x$, i.e. $G_1$.
\end{proof}

\begin{lemm}
	Let $n$ be an integer. The canonical map $G_1 \hookrightarrow G$ induces an injective homomorphism $j: G_1/G_1^n \hookrightarrow G/G^n$.
\end{lemm}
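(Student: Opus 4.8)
The plan is to produce the inverse on the level of the ambient groups: namely, to use the canonical retraction $\pi\colon G = G_1 * G_2 \twoheadrightarrow G_1$ that restricts to the identity on $G_1$ and sends $G_2$ to the trivial element. This homomorphism exists by the universal property of the free product, and $\restriction{\pi}{G_1} = \id$. The whole argument will rest on the obvious but crucial observation that $\pi$ carries $n$-th powers to $n$-th powers.

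First I would record that $\pi(G^n) \subseteq G_1^n$. Indeed, for any $h \in G$ we have $\pi(h^n) = \pi(h)^n \in G_1^n$, and since $G^n$ is, by definition, the subgroup of $G$ generated by the elements $h^n$ with $h \in G$, while $G_1^n$ is a subgroup of $G_1$, the image $\pi(G^n)$ is contained in $G_1^n$. (No normality or verbal-subgroup subtlety is needed beyond the fact that both $G^n$ and $G_1^n$ are subgroups generated by $n$-th powers.)

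Next I would identify the kernel of the composite $G_1 \hookrightarrow G \twoheadrightarrow G/G^n$. This kernel is exactly $G_1 \cap G^n$. If $g$ belongs to it, then $g \in G_1$ forces $g = \pi(g)$, and $g \in G^n$ together with the previous step gives $\pi(g) \in \pi(G^n) \subseteq G_1^n$; hence $g \in G_1^n$. The reverse inclusion $G_1^n \subseteq G_1 \cap G^n$ is immediate, since every $n$-th power of an element of $G_1$ is an $n$-th power of an element of $G$. Therefore $G_1 \cap G^n = G_1^n$, and the map $j\colon G_1/G_1^n \to G/G^n$ induced by the inclusion is injective, as claimed.

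There is essentially no hard step here: the only point requiring a moment's care is making sure that $G^n$ is understood as the subgroup generated by $n$-th powers (so that a homomorphism sending each such generator into $G_1^n$ automatically sends all of $G^n$ into $G_1^n$), and that the retraction $\pi$ is genuinely a homomorphism, which is just the universal property of $G_1 * G_2$. Note in particular that Lemma~\ref{theo:root in free product} is not needed for this statement; it will be used elsewhere. I would also remark in passing that the same retraction argument shows more generally that $j$ is a split injection of the corresponding Burnside quotients, which may be convenient later.
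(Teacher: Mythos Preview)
Your argument is correct. It is, however, a genuinely different route from the paper's: the paper obtains $G_1\cap G^n=G_1^n$ by invoking the preceding root lemma (Lemma~\ref{theo:root in free product}), which was proved via the Bass--Serre action of $G_1*G_2$ on a tree, whereas you use the canonical retraction $\pi\colon G_1*G_2\to G_1$ and the elementary fact that a homomorphism sends the verbal subgroup $G^n$ into $G_1^n$. Your approach is strictly more elementary (no tree action is needed), works for arbitrary $G_1,G_2$ without any torsion-freeness assumption, and, as you observe, actually shows that $j$ is a split injection. It is also worth noting that the paper's appeal to the root lemma is somewhat opaque: an element of $G^n$ is a \emph{product} of $n$-th powers rather than a single $n$-th power, so the root lemma does not apply to it directly; your retraction argument bypasses this subtlety entirely.
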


\begin{proof}
	By the previous lemma $G_1 \cap G^n = G_1^n$. Thus the kernel of the map $G_1 \rightarrow G \rightarrow G/G^n$ is exactly $G_1^n$.
\end{proof}

\begin{lemm}
\label{theo:injective burnside map}
	Let $n$ be an integer.
	Let $\phi$ be an automorphism of $G$ which stabilizes the factor $G_1$.
	We denote by $\phi_1$ the restriction of $\phi$ to $G_1$.
	If $\phi$ induces a finite order automorphism of $G/G^n$ then $\phi_1$ induces a finite order automorphism of $G_1/G_1^n$.
\end{lemm}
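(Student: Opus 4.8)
The plan is to deduce the statement formally from the preceding lemma, using that the inclusion $G_1 \hookrightarrow G$ induces an injective, equivariant map on Burnside quotients. First I would observe that $G^n$ is a characteristic subgroup of $G$, since it is generated by all $n$-th powers and is therefore preserved by every automorphism; consequently $\phi$ descends to an automorphism $\bar\phi$ of $G/G^n$. In the same way $G_1^n$ is characteristic in $G_1$, so the restriction $\phi_1 = \phi\restriction{}{G_1}$ (which is an automorphism of $G_1$ because $\phi$ stabilises the factor $G_1$) descends to an automorphism $\bar\phi_1$ of $G_1/G_1^n$. The hypothesis that $\phi$ induces a finite order automorphism of $G/G^n$ says exactly that $\bar\phi$ has finite order in $\aut{G/G^n}$, say $\bar\phi^{\, p} = \id$ for some $p \geq 1$.

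Next I would record the key compatibility: the map $j\colon G_1/G_1^n \hookrightarrow G/G^n$ of the previous lemma satisfies $j \circ \bar\phi_1 = \bar\phi \circ j$. This is immediate by chasing a representative $g \in G_1$: both sides send the class of $g$ to the class of $\phi(g) = \phi_1(g)$ in $G/G^n$, using only that $j$ is induced by the inclusion $G_1 \hookrightarrow G$ and that $\phi$ agrees with $\phi_1$ on $G_1$. Iterating this identity $p$ times gives $j \circ \bar\phi_1^{\, p} = \bar\phi^{\, p} \circ j = j$. Since $j$ is injective by the previous lemma, it can be cancelled on the left, yielding $\bar\phi_1^{\, p} = \id$ on $G_1/G_1^n$. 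Hence $\phi_1$ induces a finite order automorphism of $G_1/G_1^n$, as claimed.

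I do not expect a genuine obstacle here: the argument is purely formal once one has the injectivity of $j$, which is the one non-trivial ingredient and is precisely what the preceding two lemmas provide (via Bass--Serre theory, to show $G_1 \cap G^n = G_1^n$). The only points requiring a little care are that the relevant subgroups $G^n$ and $G_1^n$ are characteristic, so that all the induced maps on Burnside quotients are well defined, and that "finite order" is meant in the full automorphism group $\aut{G/G^n}$ rather than in an outer automorphism group — which is the reading consistent with the statement.
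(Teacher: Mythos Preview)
Your proof is correct and follows essentially the same route as the paper: establish the commutativity $j\circ\bar\phi_1=\bar\phi\circ j$, iterate to get $j\circ\bar\phi_1^{\,p}=j$, and cancel $j$ using its injectivity from the preceding lemma. The only difference is that you spell out why $G^n$ and $G_1^n$ are characteristic and why the square commutes, details the paper leaves implicit.
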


\begin{proof}
	We respectively denote by $\bar \phi_1$ and $\bar \phi$ the automorphisms of $G_1/G_1^n$ and $G/G^n$ induced by $\phi_1$ and $\phi$.
	By assumption, there exists an integer $k$ such that $\bar \phi^k = \id$.
	However, the following diagram is commutative.
	\begin{center}
		\begin{tikzpicture}[description/.style={fill=white,inner sep=2pt}] 
			\matrix (m) [matrix of math nodes, row sep=3em, column sep=2.5em, text height=1.5ex, text depth=0.25ex] 
			{ 
				G_1/G_1^n 	& G/G^n \\ 
				G_1/G_1^n		& G/G^n \\ 
			}; 
			\path[>=stealth, right hook->] 
			(m-1-1)	edge node[auto, above] {$j$} (m-1-2) 
			(m-2-1) 	edge node[auto, below] {$j$} (m-2-2);
			\path[>=stealth, ->] 
			(m-1-1)	edge node[auto, left] {$\bar \phi_1$} (m-2-1) 
			(m-1-2) 	edge node[auto, right] {$\bar \phi$} (m-2-2); 
		\end{tikzpicture} 
	\end{center}
	Thus $j\circ\bar \phi_1^k = \bar \phi^k \circ j = j$.
	Since $j$ is injective (see Lemma \ref{theo:injective burnside map}) $\bar\phi_1^k=\id$.
	In particular, $\bar \phi_1$ has finite order. 
\end{proof}

\begin{theo}
\label{theo:abelian free subgroups}
Let $r \geq 2$.
There exists an integer $n_0$ such that for all odd integers $n$ larger than $n_0$, $\out{\burn {2r}n}$ and $\out{\burn {2r+1}n}$ contain a subgroup which is isomorphic to $\Z^r$.
\end{theo}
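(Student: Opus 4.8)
The plan is to build the copy of $\Z^r$ inside $\out{\burn{2r}n}$ by spreading $r$ independent copies of the automorphism of Proposition~\ref{theo:infinite order automorphism novikov} over $r$ disjoint rank-two free factors of $\free{2r}$, and then to rule out relations one factor at a time using Lemma~\ref{theo:injective burnside map}. Concretely, I would fix a basis $a_1,b_1,\dots,a_r,b_r$ of $\free{2r}$, set $F^{(i)}=\langle a_i,b_i\rangle$ so that $\free{2r}=F^{(1)}*\dots*F^{(r)}$, and let $\phi_i\in\aut{\free{2r}}$ act on $F^{(i)}$ by $a_i\mapsto a_ib_i$, $b_i\mapsto a_i$ and fix the remaining basis elements. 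Since distinct $\phi_i$ are supported on disjoint free factors, they commute in $\aut{\free{2r}}$; hence the induced outer automorphisms $\bar\phi_1,\dots,\bar\phi_r$ of $\burn{2r}n$ pairwise commute, and $\langle\bar\phi_1,\dots,\bar\phi_r\rangle$ is a quotient of $\Z^r$. Taking $n_0$ to be the integer given by Proposition~\ref{theo:infinite order automorphism novikov} --- one value works for all $i$, since the $\phi_i$ are copies of a single automorphism --- the task reduces to proving that, for odd $n>n_0$, the homomorphism $\Z^r\to\out{\burn{2r}n}$ sending the $i$-th standard generator to $\bar\phi_i$ is injective.

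To establish injectivity I would take $m_1,\dots,m_r\in\Z$ with $\psi:=\phi_1^{m_1}\circ\dots\circ\phi_r^{m_r}$ inducing an inner automorphism of $\burn{2r}n$, and show that all $m_i=0$. Because every element of a Burnside group has finite order, every inner automorphism of $\burn{2r}n$ has finite order, so $\psi$ induces a finite order automorphism of $\burn{2r}n=\free{2r}/\free{2r}^n$. Fixing $i$ and writing $\free{2r}=F^{(i)}*K_i$ with $K_i$ the free product of the other factors --- both torsion-free, so Lemmas~\ref{theo:root in free product}--\ref{theo:injective burnside map} apply to this splitting --- I observe that $\psi$ stabilizes $F^{(i)}$, since $\phi_i$ preserves $F^{(i)}$ while the other $\phi_j$ fix it pointwise, and that its restriction to $F^{(i)}$ is precisely $\phi_i^{m_i}$. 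Lemma~\ref{theo:injective burnside map} then yields that $\phi_i^{m_i}$ induces a finite order automorphism of $F^{(i)}/(F^{(i)})^n\cong\burn 2n$; but Proposition~\ref{theo:infinite order automorphism novikov} asserts that $\phi_i$ induces an infinite order automorphism of $\burn 2n$, forcing $m_i=0$. Running this over all $i$ gives the required injectivity, hence the embedding $\Z^r\hookrightarrow\out{\burn{2r}n}$.

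For $\burn{2r+1}n$ the same construction works after writing $\free{2r+1}=F^{(1)}*\dots*F^{(r)}*\langle c\rangle$ and extending each $\phi_i$ by $c\mapsto c$: the $\phi_i$ still pairwise commute in $\aut{\free{2r+1}}$, and applying Lemma~\ref{theo:injective burnside map} to the splitting $F^{(i)}*(K_i*\langle c\rangle)$ again forces every $m_i$ to vanish.

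I do not anticipate a genuine obstacle here: the substantive ingredients --- the infinite-order statement of Proposition~\ref{theo:infinite order automorphism novikov}, the chain of free-product lemmas culminating in Lemma~\ref{theo:injective burnside map}, and the fact that inner automorphisms of Burnside groups have finite order --- are all already in place. The points that need care are essentially bookkeeping: that the $\phi_i$ commute at the level of $\aut{\free r}$ rather than merely of $\out{\burn rn}$ (immediate, from disjoint supports); that a single $n_0$ serves all the factors simultaneously; and that $\psi$ restricts to $\phi_i^{m_i}$ on $F^{(i)}$ so that Lemma~\ref{theo:injective burnside map} may be invoked one factor at a time.
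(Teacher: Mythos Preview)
Your proposal is correct and follows essentially the same argument as the paper: the same free-factor decomposition $F_1*\cdots*F_r$ (respectively $F_1*\cdots*F_r*\Z$), the same automorphisms $\phi_i$ supported on a single factor, the passage from ``inner'' to ``finite order'' via the torsion of $\burn rn$, and the factor-by-factor reduction through Lemma~\ref{theo:injective burnside map} to force each exponent to vanish. The only differences are cosmetic --- you are a bit more explicit about the splitting $F^{(i)}*K_i$ to which Lemma~\ref{theo:injective burnside map} is applied and about commutativity already holding in $\aut{\free{2r}}$ --- but the structure and the ingredients are identical.
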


\begin{proof}
	We denote by $\phi$ the automorphism of $\free 2$ studied in Proposition~\ref{theo:infinite order automorphism novikov}.
	There exists an integer $n_0$ such that for all odd integers $n$ larger than $n_0$, $\phi$ induces an infinite order automorphism of $\burn 2n$.
	We consider $\free {2r}$ as a free product $F_1*\dots *F_r$ of $r$ copies of $\free 2$.
	For all $i \in \intvald 1r$, we define an automorphism $\phi_i$ of $\free {2r}$ as follows.
	\begin{enumerate}
		\item The restriction of $\phi_i$ to $F_i$ is $\phi$.
		\item The restriction of $\phi_i$ to any other factor is the identity.
	\end{enumerate}
	We respectively denote by $\bar \phi_i$  and $\bar \phi$ the automorphisms of $\burn {2r}n$ and $\burn 2n$ induced by $\phi_i$ and $\phi$.
	By construction, the $\bar\phi_i$'s generate an abelian subgroup of $\aut {\burn {2r}n}$.
	We now study the relations between the $\bar \phi_i$'s in $\out{\burn {2r}n}$.
	Consider $r$ integers $k_1,\dots,k_r$ such that $\psi = \phi_1^{k_1}\dots \phi_r^{k_r}$ induces an inner automorphism $\bar \psi$ of $\burn{2r}n$.
	In particular, $\bar\psi$ has finite order.
	By Lemma \ref{theo:injective burnside map}, the restriction of $\psi$ to  $F_i$ induces a finite order automorphism of $F_i/F_i^n$.
	In other words, $\bar \phi^{k_i}$ has finite order.
	By construction, $\bar\phi$ has infinite order.
	This forces $k_i$ to be zero.
	There is hence no relation between the $\bar \phi_i$'s in $\out{\burn{2r}n}$.
	Thus the $\bar\phi_i$'s generate a subgroup of $\out{\burn{2r}n}$ which is isomorphic to $\Z^r$.
	For $\out{\burn{2r+1}n}$ we apply the same argument with the following free factorization:  $\free {2r+1} = F_1 * \dots * F_r * \Z$.
\end{proof}

\section{Small cancellation theory}
\label{section:small cancellation theory}
\paragraph{}In this section, we expose the geometrical point of view on small cancellation developed by T. Delzant  and M. Gromov in \cite{DelGro08} and used in Section 3 to prove the main theorem.

\subsection{Hyperbolic spaces}

\paragraph{}Let $X$ be a proper, geodesic, $\delta$-hyperbolic (in the sense of Gromov) space.
The distance between two points $x$ and $x'$ of $X$ is denoted by $\dist[X]x{x'}$ (or simply $\dist x{x'}$).
Although it may not be unique, we denote by $\geo x{x'}$ a geodesic joining $x$ and $x'$.
The boundary at infinity of $X$ is denoted by $\partial X$ (see \cite[Chap. 2]{CooDelPap90}).
A part $Y$ of $X$ is $\alpha$-quasi-convex if every geodesic of $X$ joining two points of $Y$ lies in the $\alpha$-neighbourhood of $Y$, denoted by $Y^{+\alpha}$.

\begin{lemm}[see {\cite[Lemma 2.1.5]{DelGro08}} or {\cite[Cor. 1.2.2]{Cou09}}]
\label{theo:four points lemma}
	Let $x$, $x'$, $y$ and $y'$ be four points of $X$.
	Let $u$ be a point of $\geo x{x'}$ such that $\dist ux > \dist xy + 8 \delta$ and $\dist u{x'} > \dist {x'}{y'} + 8 \delta$.
	Then $u$ belongs to the $8 \delta$-neighbourhood of $\geo y{y'}$.
\end{lemm}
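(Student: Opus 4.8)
The plan is to view $x$, $y$, $y'$ and $x'$ as the vertices of a geodesic quadrilateral and to exploit the thinness of geodesic triangles in the $\delta$-hyperbolic space $X$. First I would fix a geodesic $\geo x{y'}$, which cuts the quadrilateral into the two geodesic triangles with vertex sets $\left\{x,y,y'\right\}$ and $\left\{x,y',x'\right\}$. Since geodesic triangles in $X$ are $4\delta$-thin (a standard consequence of hyperbolicity in the sense of Gromov), the side $\geo x{x'}$ is contained in $\left(\geo x{y'}\cup\geo{y'}{x'}\right)^{+4\delta}$, and the side $\geo x{y'}$ is contained in $\left(\geo xy\cup\geo y{y'}\right)^{+4\delta}$. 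Chaining these two inclusions, the point $u\in\geo x{x'}$ must be within $4\delta$ of $\geo{y'}{x'}$, or within $8\delta$ of $\geo xy$, or within $8\delta$ of $\geo y{y'}$.

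Next I would eliminate the first two alternatives using the hypotheses on $u$. If $u$ were within $8\delta$ of a point $p$ of $\geo xy$, then, $p$ lying on a geodesic from $x$ to $y$, we would have $\dist ux\leq\dist up+\dist px\leq 8\delta+\dist xy$, contradicting $\dist ux>\dist xy+8\delta$. Symmetrically, if $u$ were within $4\delta$ of a point of $\geo{y'}{x'}$, then $\dist u{x'}\leq 4\delta+\dist{x'}{y'}\leq 8\delta+\dist{x'}{y'}$, contradicting $\dist u{x'}>\dist{x'}{y'}+8\delta$. The only surviving possibility is that $u$ belongs to $\left(\geo y{y'}\right)^{+8\delta}$, which is exactly the claim.

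I do not expect a genuine obstacle here: the statement is a routine hyperbolicity estimate. The only points demanding a little care are propagating the constant correctly through the two successive applications of thinness so that it lands on precisely $8\delta$ (this is why one works with the convention in which geodesic triangles are $4\delta$-thin), and checking that the strict inequalities assumed on $u$ are strong enough to discard the unwanted cases — which they are, since the bounds produced in those cases are non-strict. One could equally phrase the argument in terms of the Gromov products $\gro{\cdot}{\cdot}{u}$, but the quadrilateral picture is the most transparent.
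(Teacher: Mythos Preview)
The paper does not give its own proof of this lemma; it is stated with a reference to \cite[Lemma 2.1.5]{DelGro08} and \cite[Cor.~1.2.2]{Cou09} and then used as a black box. Your argument is correct and is the standard one: cut the geodesic quadrilateral $x,y,y',x'$ by the diagonal $\geo x{y'}$, apply $4\delta$-thinness of triangles once to place $u$ near $\geo x{y'}\cup\geo{y'}{x'}$ and once more to replace $\geo x{y'}$ by $\geo xy\cup\geo y{y'}$, and then use the two distance hypotheses on $u$ to exclude proximity to $\geo xy$ and to $\geo{y'}{x'}$. The constant bookkeeping is right under the convention (used in \cite{CooDelPap90} and \cite{DelGro08}) that $\delta$-hyperbolicity is defined via the four-point inequality, so that geodesic triangles are $4\delta$-slim; this is precisely what makes the final constant come out as $8\delta$.
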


\begin{prop}[see {\cite[Lemma 2.2.2]{DelGro08}} or {\cite[Prop. 1.2.4]{Cou09}}]
\label{theo:intersection of quasi-convexes}
	Let $Y$ and $Z$ be two $\alpha$-quasi-convex parts of $X$. 
	For all $A>0$, we have
	\begin{displaymath}
		\diam \left(Y^{+A} \cap Z^{+A} \right) \leq \diam \left( Y^{+\alpha+10 \delta} \cap Z^{+\alpha+10 \delta} \right) + 2A + 20 \delta.
	\end{displaymath}
\end{prop}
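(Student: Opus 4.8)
The plan is to reduce the statement to an estimate on the length of a geodesic segment joining two points of $Y^{+A}\cap Z^{+A}$, and then to locate the bulk of that segment inside $Y^{+\alpha+10\delta}\cap Z^{+\alpha+10\delta}$ by means of the four points lemma (Lemma~\ref{theo:four points lemma}).

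First I would fix two points $p,q \in Y^{+A}\cap Z^{+A}$ and set $L = \dist{p}{q}$; it suffices to show $L \le \diam\left(Y^{+\alpha+10\delta}\cap Z^{+\alpha+10\delta}\right) + 2A + 20\delta$. I would choose $p_Y, q_Y \in Y$ and $p_Z, q_Z \in Z$ with $\dist{p}{p_Y}, \dist{q}{q_Y}, \dist{p}{p_Z}, \dist{q}{q_Z} \le A$ (the neighbourhoods $Y^{+A}$, $Z^{+A}$ being closed), and fix a geodesic $\geo{p}{q}$. If $L \le 2A + 16\delta$ there is nothing to prove, since the right-hand side is already at least $2A + 20\delta$; so one may assume $L > 2A + 16\delta$.

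Then, for a small $\epsilon>0$, I would consider the two points $u, u' \in \geo{p}{q}$ with $\dist{p}{u} = A + 8\delta + \epsilon$ and $\dist{q}{u'} = A + 8\delta + \epsilon$. Applying Lemma~\ref{theo:four points lemma} with $x = p$, $x' = q$, $y = p_Y$, $y' = q_Y$ — its hypotheses $\dist{u}{p} > \dist{p}{p_Y} + 8\delta$ and $\dist{u}{q} > \dist{q}{q_Y} + 8\delta$ being guaranteed by the choice of $u$ together with $L > 2A + 16\delta + \epsilon$ (true once $\epsilon$ is small enough) — shows that $u$ lies in the $8\delta$-neighbourhood of $\geo{p_Y}{q_Y}$; by $\alpha$-quasi-convexity of $Y$ this geodesic lies in $Y^{+\alpha}$, hence $u \in Y^{+\alpha+8\delta} \subseteq Y^{+\alpha+10\delta}$. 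Running the same argument with $Z$, $p_Z$, $q_Z$ in place of $Y$, $p_Y$, $q_Y$, and then the whole argument again with $u'$ in place of $u$, gives $u, u' \in Y^{+\alpha+10\delta} \cap Z^{+\alpha+10\delta}$. Since $\dist{u}{u'} = L - 2(A+8\delta) - 2\epsilon$, this yields $L - 2A - 16\delta - 2\epsilon \le \diam\left(Y^{+\alpha+10\delta}\cap Z^{+\alpha+10\delta}\right)$, and letting $\epsilon \to 0$ gives the desired inequality.

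I expect the argument to be essentially bookkeeping: the only delicate point is the interplay between the strict inequalities in Lemma~\ref{theo:four points lemma} and the length threshold on $L$, which is why one works with the auxiliary parameter $\epsilon$ and with neighbourhoods of radius $\alpha + 10\delta$ rather than $\alpha + 8\delta$ — the extra $2\delta$, together with the slack between $2A+20\delta$ and $2A+16\delta$, leaves enough room to absorb the losses. The four points lemma does all the geometric work, so there should be no serious obstacle.
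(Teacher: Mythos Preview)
Your argument is correct and is the standard one. The paper itself gives no proof of this proposition --- it simply cites \cite[Lemma~2.2.2]{DelGro08} and \cite[Prop.~1.2.4]{Cou09} --- but Lemma~\ref{theo:four points lemma} is recorded precisely so that estimates of this kind go through, and your use of it is exactly as intended.

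One minor quibble: the closedness of $Y^{+A}$ does not by itself guarantee a point $p_Y\in Y$ with $\dist p{p_Y}\leq A$; it only gives $d(p,Y)\leq A$, and the infimum need not be attained if $Y$ is not closed. This is harmless, however: choosing $p_Y$ with $\dist p{p_Y}\leq A+\epsilon$ (and similarly for $q_Y,p_Z,q_Z$) and absorbing this into the same $\epsilon$-bookkeeping you already perform leaves the argument and the final inequality unchanged.
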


\paragraph{}Let $G$ be a group acting properly, co-compactly, by isometries on $X$.
An element $g$ of $G$ is either \emph{elliptic} (in particular it has finite order) or \emph{hyperbolic} (see \cite[Chap. 9]{CooDelPap90}).
In the second case, $g$ fixes exactly two points of $\partial X$ denoted by $g^-$ and $g^+$.
In order to measure the action of an isometry $g$ of $X$, we define two translation lengths.
The \emph{translation length} $\len[espace=X]g$ (or simply $\len g$) is given by 
\begin{displaymath}
	\len g = \inf_{x \in X} \dist {gx}x.
\end{displaymath}
The \emph{asymptotic translation length} $\len[stable, espace=X] g$ (or simply $\len[stable]g$) is defined by 
\begin{displaymath}
	\len[stable] g = \lim_{n \rightarrow + \infty} \frac 1n \dist{g^nx}x.
\end{displaymath}
These two lengths satisfy the following inequality (see \cite[Chap. 10, Prop 6.4]{CooDelPap90}):
\begin{displaymath}
	\forall g \in G,\quad \len[stable]g \leq \len g \leq \len[stable] g + 32 \delta.
\end{displaymath}
An isometry of $X$ is hyperbolic if and only if its asymptotic translation length is positive (see \cite[Chap. 10, Prop. 6.3]{CooDelPap90}).
The \emph{axis} $A_g$ of an isometry $g$, defined as follows, is a $40 \delta$-subset of $X$ (cf. \cite[Prop. 2.3.3]{DelGro08}).
\begin{displaymath}
	A_g = \left\{ x \in X/ \dist {gx}x \leq \max\left\{ \len g, 40 \delta\right\} \right\}
\end{displaymath}

\begin{prop}
\label{theo:embedded quasi-convex in an axis}
	Let $g$ be a hyperbolic element of $G$.
	We denote by $\sigma$ a geodesic joining $g^-$ and $g^+$, the points of $\partial X$ fixed by $g$.
	Let $Y$ be a $\alpha$-quasi-convex part of $X$.
	If $Y$ is $g$-invariant, then $\sigma$ is contained in the $(\alpha+8 \delta)$-neighbourhood of $Y$.
	In particular $\sigma$ is contained in the $50 \delta$-neighbourhood of $A_g$.
\end{prop}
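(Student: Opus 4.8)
The plan is to squeeze the bi-infinite geodesic $\sigma$ between geodesic segments whose endpoints lie in $Y$, and then to apply Lemma~\ref{theo:four points lemma} together with the quasi-convexity of $Y$. Fix a point $u \in \sigma$ and a point $\pi \in Y$, and set $z_m = g^m\pi$ for $m \in \Z$; since $Y$ is $g$-invariant, every $z_m$ lies in $Y$. Because $g$ is hyperbolic we have $\len[stable]g > 0$, so the orbit map $m \mapsto z_m$ is a quasi-isometric embedding of $\Z$ into $X$, with constants depending only on $\dist{g\pi}{\pi}$ and $\len[stable]g$; by stability of quasi-geodesics in a $\delta$-hyperbolic space (see \cite{CooDelPap90}) its image stays within a bounded distance $C$ of any geodesic joining its endpoints at infinity. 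Those endpoints are precisely $g^-$ and $g^+$, and moreover $z_m \to g^+$, $z_{-m} \to g^-$ as $m \to +\infty$ (see \cite[Chap. 9]{CooDelPap90}); hence $z_m \in \sigma^{+C}$ for every $m$.

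For each $m$ choose $x_m \in \sigma$ with $\dist{x_m}{z_{-m}} \le C$ and $x_m' \in \sigma$ with $\dist{x_m'}{z_m} \le C$. Since $z_{-m} \to g^-$ and $z_m \to g^+$, the points $x_m$ and $x_m'$ escape to the two ends of $\sigma$; therefore, for all $m$ large enough, $u$ lies on the subsegment $\geo{x_m}{x_m'} \subseteq \sigma$ and $\dist{u}{x_m} > C + 8\delta$, $\dist{u}{x_m'} > C + 8\delta$. Lemma~\ref{theo:four points lemma}, applied to the four points $x_m$, $x_m'$, $z_{-m}$, $z_m$ with $u$ as the distinguished point of $\geo{x_m}{x_m'}$, then gives $u \in \geo{z_{-m}}{z_m}^{+8\delta}$. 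As $z_{-m}, z_m \in Y$ and $Y$ is $\alpha$-quasi-convex, we get $\geo{z_{-m}}{z_m} \subseteq Y^{+\alpha}$, whence $u \in Y^{+(\alpha+8\delta)}$. Since $u \in \sigma$ was arbitrary, $\sigma \subseteq Y^{+(\alpha+8\delta)}$.

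The final assertion follows by applying this to $Y = A_g$: the axis $A_g$ is $g$-invariant (if $\dist{gx}{x} \le \max\{\len g, 40\delta\}$ then $\dist{g(gx)}{gx} = \dist{gx}{x}$ obeys the same bound) and $40\delta$-quasi-convex by \cite[Prop. 2.3.3]{DelGro08}, so $\sigma \subseteq A_g^{+(40\delta+8\delta)} \subseteq A_g^{+50\delta}$.

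I expect the only real obstacle to be the first paragraph: one must make precise that the $g$-orbit of a point of $Y$ is a quasi-geodesic line with endpoints $g^\pm$ and keep it boundedly close to $\sigma$ via the Morse lemma. Everything after that is a direct application of the four points lemma and of quasi-convexity; the constant $C$ may depend on $g$, which is harmless since no uniformity in $g$ is claimed.
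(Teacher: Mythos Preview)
Your proof is correct and structurally the same as the paper's: both pin down a point of $\sigma$ between two far-away points of $\sigma$ that are close to points of $Y$, and then invoke Lemma~\ref{theo:four points lemma} together with the $\alpha$-quasi-convexity of $Y$. The only real difference is in how those far-away points are produced. You fix an arbitrary $\pi\in Y$ and use the Morse lemma to keep the orbit $(g^m\pi)$ within a (non-explicit, $g$-dependent) constant $C$ of $\sigma$. The paper instead translates the point $x\in\sigma$ itself: since $g^{\pm m}\sigma$ and $\sigma$ are bi-infinite geodesics with the same endpoints $g^\pm$, they are $8\delta$-close, so $g^{\pm m}x\in\sigma^{+8\delta}$; one then picks $y\in Y$ with $\dist xy\leq d(x,Y)+\delta$ and uses $g^{\pm m}y\in Y$ as the target points for the four-points lemma. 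This avoids the stability-of-quasi-geodesics machinery and keeps every constant explicit. Your derivation of the final assertion (take $Y=A_g$, which is $g$-invariant and $40\delta$-quasi-convex) is exactly the intended one.
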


\begin{proof}
	Let $x$ be a point of $\sigma$.
	We denote by $d$ the distance from $x$ to $Y$ and by $y$ a point of $Y$ such that $\dist xy \leq d + \delta$.
	Since $g$ is hyperbolic, there exists an integer $m$ such that $\dist {g^mx}{g^{-m}x} \geq 2d + 100\delta$ (see \cite[Chap. 10, Lemme 6.5]{CooDelPap90}).
	We respectively denote by $p_-$ and $p_+$ the projections of $g^{-m}x$ and $g^mx$ on $\sigma$.
	The geodesics $\sigma$ and $g^m\sigma$ have the same extremities. 
	It follows that  they are $8\delta$-closed (see \cite[Chap. 2, Prop 2.2]{CooDelPap90}).
	In particular $\dist{g^mx}{p_+} \leq 8 \delta$.
	In the same way, we have $\dist{g^{-m}x}{p_-} \leq 8 \delta$.
	Note that $x$ lies on the subgeodesic of $\sigma$ delimited by $p_-$ and $p_+$.
	Indeed, if it was not the case, we should have
	\begin{eqnarray*}
		\dist{g^{-m}x}{g^mx} \leq \dist{p_-}{p_+} + 16 \delta
		& \leq & \left| \dist x{p_-} - \dist x{p_+}\right| + 16 \delta \\
		& \leq & \left| \dist x{g^{-m}x} - \dist x{\vphantom{g^{-m}}g^mx}\right| + 32 \delta \\
		& \leq & 32 \delta.
	\end{eqnarray*}
	Contradiction.
	On the other hand, we have
	\begin{displaymath}
		\dist x{p_+} \geq \dist x{g^mx} - 8 \delta \geq \frac 12 \dist{g^{-m}x}{g^mx} - 8 \delta \geq d + 30 \delta.
	\end{displaymath}
	In the same way, we have $\dist x{p_-} \geq d + 30 \delta$.
	By lemma \ref{theo:four points lemma}, $x$ belong to the $8 \delta$-neighbourhood of $\geo{g^{-m}y}{g^my}$.
	However $g^{-m}y$ and $g^my$ belongs to $Y$ which is $\alpha$-quasi-convex.
	Therefore the distance between $x$ and $Y$ is smaller than $\alpha + 8 \delta$.
\end{proof}

The \emph{injectivity radius} of a part $P$ of $G$ on $X$ is defined by 
\begin{displaymath}
\label{def:injectivity radius}
	\rinj P X = \inf\left\{ \len[stable]g / g \text{ is a hyperbolic element of } P\right\}.
\end{displaymath}

\paragraph{} A subgroup of $G$ is called \emph{elementary} if it is virtually cyclic.
Since $G$ is a hyperbolic group, any non-elementary subgroup of $G$ contains a copy of $\free 2$, the free group of rank 2 (see \cite[Chap. 8, Theo. 37]{GhyHar90}).
Given a hyperbolic isometry $g$ of $X$, the normalizer of $\left<g\right>$ is elementary (see \cite[Chap. 10, Cor. 7.2]{CooDelPap90}).

\paragraph{}The group $G$ satisfies the \emph{small centralizers hypothesis} if $G$ is non-elementary and any elementary subgroup of $G$ is cyclic.
\label{def:small centralizers hypothesis}
In order to study such a group we define the invariant $\Delta(G,X)$.
\label{def:invariant delta}
It is the upper bound of $\diam \left( A_g^{+50 \delta} \cap A_{g'}^{+50 \delta}\right)$, where $g$ and $g'$ are two elements of $G$ which generate a non-elementary subgroup and whose translation lengths are smaller than $100 \delta$ (see also \cite{DelGro08}).

\subsection{Small cancellation theorem}

\paragraph{} 
For the rest of Section \ref{section:small cancellation theory}, we assume that $X$ is simply-connected, and $G$ satisfies the small centralizers hypothesis.
Let $H$ be a normal subgroup of $G$ and $P$ a set of hyperbolic elements of $H$ stable by conjugation.
We also assume that $P$ only contains a finite number of conjugacy classes.
Let $N$ be the subgroup of $G$ generated by $P$. 
\paragraph{} 
The goal is to study the quotient $\bar G = G/N$.
To that end, we use a small cancellation assumption whose statement requires the following objects.
Let $\rho$ be an element of $P$.
We denote by $Y_\rho$ the set of points of $X$ which are $10 \delta$-closed to a geodesic joining $\rho^-$ and $\rho^+$.
The set $Y_\rho$ is $10 \delta$-quasi-convex (see \cite[Lemma 1.2.8]{Cou09}).
The subgroup of $G$ which stabilizes $Y_\rho$ is denoted by $E_\rho$. 
It is an elementary subgroup of $G$ (see \cite[Chap. 10, Prop. 7.1]{CooDelPap90}).
The parameters $\Delta(P)$ and $\rinjRF P$, defined below, respectively play the role of the length of the largest piece and the length of the smallest relation in the usual small cancellation theory.
\begin{eqnarray*}
	\Delta (P) & = & \sup_{\rho \neq \tau} \diam \left(Y_\rho^{+20 \delta} \cap Y_\tau^{+20 \delta}\right) \\
	\rinjRF P & = & \inf_{\rho \in P} \len[stable] \rho
\end{eqnarray*}

We are interested in situations where the ratios $\frac {\delta}{\rinjRF P}$ and $\frac {\Delta(P)}{\rinjRF P}$ are very small (see Theorem \ref{theo:small cancellation theorem} below).
We construct now a space $\bar X$ on which $\bar G$ acts properly, co-compactly by isometries.
Let $r_0$ be a positive number. 
Its value will be made precise in the small cancellation theorem (see Theorem \ref{theo:small cancellation theorem}).
Let $\rho \in P$.
We endow $Y_\rho$ with the length metric $\distV[\rho]$ induced by the restriction of $\distV[X]$ to $Y_\rho$.
The \emph{cone over $Y_\rho$} denoted by $C_\rho(r_0)$ (or simply $C_\rho$) is the topological quotient of $Y_\rho \times \intval 0{r_0}$ by the equivalence relation which identifies the points $(y,0)$, $y \in Y_\rho$.
If $(y,r)$ and $(y',r')$ are two points of $C_\rho$ the following formula defines a distance on $C_\rho$ (see \cite[Chap. I.5, Prop 5.9(1)]{BriHae99}):
\begin{displaymath}
	\cosh \left( \dist{\fantomB(y,r)}{(y',r')}\right) = \cosh r \cosh r' - \sinh r \sinh r' \cos \left( \min \left\{ \pi , \frac {\dist[\rho]y{y'}}{\sinh r_0}\right\}\right).
\end{displaymath}

\paragraph{} The \emph{cone-off over $X$ relatively to $P$}, denoted by $\dot X_P(r_0)$ (or simply $\dot X)$ is obtained by attaching, for all $\rho \in P$, the cone $C_\rho$ on $X$ along $Y_\rho$.
The distances $\distV[X]$ and $\distV[C_\rho]$ induce a metric on $\dot X$ (see \cite[Prop.~3.1.7]{Cou09}). 
We extend by homogeneity the action of $G$ on $X$ in an action of $G$ on $\dot X$:
if $x=(y,r)$ is a point of the cone $C_\rho$ and $g$ an element of $G$, then $gx$ is the point of the cone $C_{g\rho g^{-1}}=gC_\rho$ defined by $(gy,r)$.
Thus, $G$ acts by isometries on $\dot X$ (see \cite[Lemma 4.3.1]{Cou09})
The metric space $\bar X_P(r_0)$ (or simply $\bar X$) is the quotient of $\dot X$ by $N$.
It is a proper, geodesic, simply-connected metric space. 
Moreover $\bar G$ acts properly, co-compactly, by isometries on it (see \cite[Prop~II.3.12]{Cou10}).

\begin{theo}[Small cancellation theorem, see {\cite[Th.~5.5.2]{DelGro08}} or {\cite[Th.~4.2.2]{Cou09}}]
\label{theo:small cancellation theorem}
	There exist positive numbers $\delta_0$, $\delta_1$, $\Delta_0$ and $r_0\geq 10^5\delta_1$, that do not depend on $X$, $G$ or $P$ such that, if $\delta\leq \delta_0$, $\Delta(P) \leq \Delta_0$ and $\rinjRF P \geq 3 \pi \sinh r_0$, then the space $\bar X_P(r_0)$ is $\delta_1$-hyperbolic. 
	In particular, $\bar G$ is a hyperbolic group.
\end{theo}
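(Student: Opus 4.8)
The plan is to deduce the hyperbolicity of $\bar X = \bar X_P(r_0)$ from its local geometry. I would use a \emph{local-to-global} (Cartan--Hadamard type) principle for hyperbolic spaces: there are universal constants $C_1$ and $C_2$ such that every simply-connected length space all of whose balls of radius $C_1\hat\delta$ are $\hat\delta$-hyperbolic is globally $C_2\hat\delta$-hyperbolic (see \cite{DelGro08} and \cite{Cou09}). Since $\bar X$ is already proper, geodesic and simply-connected, everything reduces, for a well chosen value of $r_0$, to two assertions: \textbf{(C1)} the cone-off $\dot X = \dot X_P(r_0)$ is $\hat\delta$-hyperbolic for some $\hat\delta$ that depends only on $\delta$ and $r_0$ (and not on $X$, $G$ or $P$), and is bounded by an absolute constant once $\delta$ is a small enough fraction of $r_0$; and \textbf{(C2)} the quotient map $\pi\colon\dot X\to\bar X$ restricts, around each point, to an isometry from the ball of radius $r_0/100$ of $\dot X$ onto the corresponding ball of $\bar X$. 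Granting (C1) and (C2), every ball of radius $r_0/100$ of $\bar X$ is $\hat\delta$-hyperbolic, so if $r_0\geq 100\,C_1\hat\delta$ the principle above gives that $\bar X$ is $\delta_1$-hyperbolic with $\delta_1 = C_2\hat\delta$; enlarging $r_0$ once more we may also demand $r_0\geq 10^5\delta_1$. Finally $\bar G$ acts properly, co-compactly by isometries on the proper geodesic $\delta_1$-hyperbolic space $\bar X$, hence it is hyperbolic.

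\textbf{Step 1: the cone-off is hyperbolic (C1).} First I would treat a single cone. From the explicit formula for the metric of $C_\rho(r_0)$ and comparison with the hyperbolic plane (see \cite[Chap.~I.5]{BriHae99}), one checks that $C_\rho(r_0)$ is $\hat\delta_0$-hyperbolic for an \emph{absolute} constant $\hat\delta_0$, independent of $r_0$ and of $Y_\rho$, and that the radial retraction onto the apex is strongly contracting: two points of $Y_\rho$ at $Y_\rho$-distance $d$ subtend at the apex an angle $\min\{\pi,\,d/\sinh r_0\}$, so that ``going around'' a cone is long. One then glues the cones to $X$ along the $10\delta$-quasi-convex sets $Y_\rho$: a geodesic triangle of $\dot X$ decomposes into subtriangles lying in $X$ or in a single cone, and one controls how it enters and leaves the cones --- here one uses that $\delta$ is small compared to $r_0$, so that the $Y_\rho$ behave almost like convex subsets of $\dot X$ --- to conclude that $\dot X$ is $\hat\delta$-hyperbolic with $\hat\delta$ bounded by an absolute multiple of $\hat\delta_0$. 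This is essentially the content of \cite{DelGro08}; as announced in the outline, several of those estimates have to be sharpened so that all the constants depend only on $r_0$ and on the ratios $\delta/r_0$, $\Delta(P)/r_0$ and $\sinh r_0/\rinjRF P$, never on $X$ itself.

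\textbf{Step 2: short balls of $\bar X$ lift to $\dot X$ (C2).} This is the heart of the argument, and the place where the small cancellation hypotheses $\Delta(P)\leq\Delta_0$ and $\rinjRF P\geq 3\pi\sinh r_0$ come in. Since $\rho$ stabilises $Y_\rho$ it acts on the cone $C_\rho(r_0)$, and because $\len[stable]\rho\geq 3\pi\sinh r_0$ it acts near the apex as a ``rotation'' of angle more than $3\pi$; on the other hand $\Delta(P)\leq\Delta_0$ forces two distinct cones to overlap in a set of diameter at most $\Delta_0\ll r_0$ --- the diameter of $Y_\rho^{+20\delta}\cap Y_\tau^{+20\delta}$ being moreover controlled, via Lemma \ref{theo:four points lemma} and Proposition \ref{theo:intersection of quasi-convexes}, by the invariant $\Delta(G,X)$. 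I would then prove a Greendlinger type lemma for reduced disc diagrams over $P$ in $\dot X/N$: any such diagram which contains at least one cone-cell and whose boundary has diameter at most $r_0/50$ must contain a cone-cell meeting the boundary along an arc of length more than $\rinjRF P - C\bigl(\Delta(P)+\delta\bigr)$, which for $\Delta_0$ and $\delta$ small compared to $r_0$ is larger than $r_0/50$ --- a contradiction. Hence no short loop of $\bar X$ can run along a relation, so $\pi$ is a local isometry and restricts to an isometry on balls of radius $r_0/100$. The small centralizers hypothesis enters here to guarantee that the subgroups $E_\rho$, and hence the stabilisers of the cone points of $\bar X$, are cyclic, so that the diagrams are tame and the $\bar G$-action on $\bar X$ is proper and co-compact.

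\textbf{Conclusion and the main obstacle.} Combining Steps 1 and 2 with the local-to-global principle proves the statement. Step 1 provides a bound $\hat\delta\leq\hat\delta_*$ with $\hat\delta_*$ a universal constant, valid once $r_0$ exceeds a universal threshold and $\delta$ is a small enough fraction of $r_0$; one then sets $\delta_1 = C_2\hat\delta_*$, picks $r_0$ larger than $\max\{10^5\delta_1,\,100\,C_1\hat\delta_*\}$ and than that threshold, and finally takes $\delta_0$ and $\Delta_0$ to be fixed small fractions of $r_0$, so that the hypotheses $\delta\leq\delta_0$ and $\Delta(P)\leq\Delta_0$ make Steps 1 and 2 apply --- all four constants are then independent of $X$, $G$ and $P$. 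I expect the main obstacle to be Step 2: transporting the classical Greendlinger combinatorics into the geometric cone-off setting while keeping \emph{explicit, $X$-independent} control of every constant, and in particular proving the ``long shell'' estimate with exactly the threshold encoded by $\rinjRF P\geq 3\pi\sinh r_0$, keeping track of how the $10\delta$-quasi-convexity of the $Y_\rho$, the bound $\Delta(P)\leq\Delta_0$ on pieces and the small centralizers hypothesis interact. A related, subtler point already visible in Step 1 is that the hyperbolicity constant of the cone-off must not be allowed to grow with $r_0$, which is precisely why $\delta_0$ and $\Delta_0$ have to be chosen small \emph{relative to} $r_0$ rather than in absolute terms.
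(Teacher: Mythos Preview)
The paper does not actually prove this theorem: it is quoted verbatim from \cite[Th.~5.5.2]{DelGro08} and \cite[Th.~4.2.2]{Cou09}, with no argument given in the paper itself. So there is no in-paper proof to compare your proposal against.

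That said, your outline is a faithful high-level sketch of the Delzant--Gromov strategy in those references: establish a universal hyperbolicity constant for the cone-off $\dot X$; use the small cancellation hypotheses $\Delta(P)\leq\Delta_0$ and $\rinjRF P\geq 3\pi\sinh r_0$ to show that the quotient map $\dot X\to\bar X$ is a local isometry on balls of radius a fixed fraction of $r_0$; and then apply the Cartan--Hadamard local-to-global principle. Your identification of the main obstacle --- keeping every constant explicit and independent of $X$, $G$, $P$, and preventing the hyperbolicity constant of $\dot X$ from growing with $r_0$ --- is exactly the delicate point in those papers. One small correction: the small centralizers hypothesis is a standing assumption in this section of the paper, but it is not what drives the hyperbolicity of $\bar X$; it is used downstream (e.g.\ Proposition~\ref{theo:finite subgroups}) to control the elementary subgroups of $\bar G$, not in Step~2 as you suggest.
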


\rem The fact that the constants $r_0$, $\delta_0$, $\delta_1$ and $\Delta_0$ do not depend on $X$, $P$ or $G$ is very important in order to iterate the small cancellation construction.

\subsection{Estimation of the injectivity radius of $\bar H$}

\paragraph{} 
We suppose now that the assumptions of the small cancellation theorem are fulfilled.
In order to iterate the construction, we need an estimation of the small cancellation parameters for $\bar G$.
This can be achieved by controlling the constants $\Delta(\bar G, \bar X)$ and $\rinj {\bar H} {\bar X}$, where $\bar H$ is the image of $H$ by the projection $\pi : G \rightarrow \bar G$.
Let $\nu$ be the canonical map $\nu : \dot X \rightarrow \bar X$.
The space $\bar X$ is obtained by gluing cones of large radius on $\nu (X)$.
This construction is  a kind of Margulis decomposition.
The cones play the role of the thick part: the translation length of a hyperbolic element of $\bar G$ on a cone is very large.
In particular we have the following lemma.

\begin{lemm}[see {\cite[Lemme 5.9.3]{DelGro08}}]
\label{theo:margulis thick part}
	Let $\bar g$ be an element of $\bar G$ such that $\len {\bar g} \leq 200 \delta_1$.
	Assume that, for all $ \rho \in P$, $\bar g$ does not belong to $\bar E_\rho = \pi \left( E_ \rho\right)$.
	Then $A_{\bar g}$ is contained in $\nu(X)^{+100\delta_1}$ and $A_{\bar g} \cap \nu(X)$ is non-empty.
\end{lemm}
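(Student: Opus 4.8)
The plan is to examine where a point $x \in A_{\bar g}$ can lie with respect to the decomposition of $\bar X$ into $\nu(X)$ and the attached cones $C_\rho$, $\rho \in P$. Recall that $A_{\bar g} = \left\{ x \in \bar X : \dist{\bar g x}{x} \leq \max\{\len{\bar g}, 40\delta_1\}\right\}$; since $\len{\bar g} \leq 200\delta_1$, every $x \in A_{\bar g}$ satisfies $\dist{\bar g x}{x} \leq 200\delta_1$. Fix such an $x$. If $x \in \nu(X)$ the first assertion holds for this point, so assume $x$ lies in the interior of a (necessarily unique) cone $C_\rho$, at radius $r < r_0$ from the apex, and set $d = \dist{x}{\nu(X)}$. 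Since the only way out of $C_\rho$ is through its base $\nu(Y_\rho) \subseteq \nu(X)$, the formula for the cone-off metric gives $d = r_0 - r$, attained along the radial segment.

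The crucial observation is that $\bar g$ cannot stabilise the cone $C_\rho$. Indeed the stabiliser of $C_\rho$ in $\bar G$ is contained in $\bar E_\rho = \pi(E_\rho)$ --- this follows from the construction of the cone-off, lifting to $G$ and using both that two cones of $\dot X$ have the same image in $\bar X$ only if some element of $N$ carries one onto the other, and that $E_\rho$ is the stabiliser of $Y_\rho$ in $G$ --- whereas by hypothesis $\bar g \notin \bar E_\rho$. As $\bar g$ preserves the radial coordinate, $\bar g x$ lies at radius $r$ in the cone $\bar g C_\rho \neq C_\rho$, so $\dist{\bar g x}{\nu(X)} = d$ as well.

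Now I run the obvious geodesic estimate. A geodesic $\gamma$ from $x$ to $\bar g x$ must leave $C_\rho$ --- crossing its base $\nu(Y_\rho) \subseteq \nu(X)$ at a point $x'$ --- before it can reach the cone $\bar g C_\rho$, which it enters only by crossing that cone's base, again contained in $\nu(X)$, at a point $x''$; moreover $x'$ precedes $x''$ along $\gamma$, because $C_\rho$ and $\bar g C_\rho$ have disjoint interiors. Since $\dist{x}{x'} \geq \dist{x}{\nu(Y_\rho)} = d$, $\dist{x''}{\bar g x} \geq \dist{\bar g x}{\nu(X)} = d$, and the subsegments $[x,x']$ and $[x'',\bar g x]$ of $\gamma$ are disjoint, we obtain
\begin{displaymath}
	\dist{x}{\bar g x} \;\geq\; \dist{x}{x'} + \dist{x''}{\bar g x} \;\geq\; 2d .
\end{displaymath}
Together with $\dist{x}{\bar g x} \leq 200\delta_1$ this forces $d \leq 100\delta_1$, i.e. $x \in \nu(X)^{+100\delta_1}$; hence $A_{\bar g} \subseteq \nu(X)^{+100\delta_1}$. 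For the second assertion I reuse the exit point $x'$: starting from any $x \in A_{\bar g}$ (which is non-empty) with $x \notin \nu(X)$, the point $x'$ lies on the geodesic $[x, \bar g x]$, so
\begin{displaymath}
	\dist{\bar g x'}{x'} \;\leq\; \dist{\bar g x'}{\bar g x} + \dist{\bar g x}{x'} \;=\; \dist{x'}{x} + \left(\dist{x}{\bar g x} - \dist{x}{x'}\right) \;=\; \dist{x}{\bar g x} \;\leq\; \max\{\len{\bar g}, 40\delta_1\},
\end{displaymath}
whence $x' \in A_{\bar g} \cap \nu(X)$.

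The only step requiring genuine input beyond the definition of the cone-off metric is the inclusion of the stabiliser of $C_\rho$ into $\bar E_\rho$; the identity $\dist{x}{\nu(X)} = r_0 - r$, the preservation of the radial coordinate by $\bar g$, and the non-emptiness of $A_{\bar g}$ on the hyperbolic space $\bar X$ are all routine. Note that it is the factor $2$ in the displayed inequality that matches the bound $\dist{x}{\bar g x} \leq 200\delta_1$ supplied by $\len{\bar g}$ to the constant $100\delta_1$ of the statement, so the estimate is tight.
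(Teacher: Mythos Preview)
The paper does not actually prove this lemma; it merely cites \cite[Lemme 5.9.3]{DelGro08}. Your direct geometric argument is correct and is essentially the one Delzant and Gromov give: since $\bar g \notin \bar E_\rho$, the element $\bar g$ moves the cone $\nu(C_\rho)$ off itself, so any geodesic from $x$ to $\bar g x$ must first exit through $\nu(Y_\rho)\subset \nu(X)$ and later re-enter through the base of the image cone, picking up length at least $2d$; together with $\dist{\bar g x}{x}\leq 200\delta_1$ this yields $d\leq 100\delta_1$, and your reuse of the exit point $x'$ for the second assertion is clean.

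One minor comment: the equality $d=r_0-r$ that you state in passing requires knowing that the radial segment realises the distance from $(y,r)$ to $\nu(X)$ in the \emph{quotient} metric on $\bar X$, not just in the cone metric. This is true, but your argument does not in fact use it: you only need $d=\dist{x}{\nu(X)}$ together with the topological fact that any path leaving the open cone must meet $\nu(X)$, and both of these are immediate. So the parenthetical identification could simply be dropped without loss.
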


To study $\nu(X)^{+100 \delta_1}$, which is an analogue of the thin part of the Margulis decomposition, we use the fact that the map $\nu(X) \rightarrow \bar X$ is a local quasi-isometry:

\begin{lemm}[see {\cite[Prop. 3.1.8]{Cou09}}]
\label{theo:margulis quasi-isometry}
	Let $x$ and $x'$ be two points of $X$.
	First, $\dist[\dot X] x{x'} \leq \dist[X] x{x'}$.
	Moreover, if $\dist[\dot X] x{x'} \leq \frac {r_0}2$ then $\dist[X] x{x'} \leq \frac {2 \pi \sinh r_0}{r_0} \dist[\dot X] x{x'}$.
\end{lemm}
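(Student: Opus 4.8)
The plan is to reduce both inequalities to the explicit formula for the metric of a single cone $C_\rho$, and then to a couple of elementary estimates for $\sinh$ and $\sin$. The first inequality is immediate: $\dot X$ is by construction a length space in which the length of a path is computed from $\distV[X]$ and the cone metrics $\distV[C_\rho]$, so any geodesic of $X$ from $x$ to $x'$ is an admissible path of $\dot X$ of the same length, whence $\dist[\dot X]{x}{x'}\leq\dist[X]{x}{x'}$.

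For the second inequality I would first cut things down to one cone. A path of $\dot X$ joining $x,x'\in X$ splits into maximal subpaths lying in $X$ and maximal subpaths lying in some cone $C_\rho$; the latter have both endpoints on $Y_\rho$, since a cone is glued to $X$ exactly along $Y_\rho$, and such a cone subpath has length at least the distance in $C_\rho$ of its two endpoints. Hence it suffices to prove: if $y,y'\in Y_\rho$ and $\dist[C_\rho]{y}{y'}\leq\frac{2}{3}r_0$, then $\dist[X]{y}{y'}\leq\frac{2\pi\sinh r_0}{r_0}\dist[C_\rho]{y}{y'}$. Granting this, fix $\epsilon$ with $0<\epsilon<\frac{r_0}{6}$; since $\dist[\dot X]{x}{x'}\leq\frac{r_0}{2}$ there is a path of $\dot X$ from $x$ to $x'$ of length less than $\dist[\dot X]{x}{x'}+\epsilon<\frac{2}{3}r_0$, so the two endpoints of each of its cone subpaths are within distance $\frac{2}{3}r_0$ in the corresponding $C_\rho$. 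Replacing each cone subpath by an $X$-geodesic between its endpoints multiplies its length by at most $\frac{2\pi\sinh r_0}{r_0}$, and since $\frac{2\pi\sinh r_0}{r_0}>2\pi>1$ the remaining $X$-subpaths may harmlessly be counted with the same factor; one obtains a path of $X$ from $x$ to $x'$ of length at most $\frac{2\pi\sinh r_0}{r_0}(\dist[\dot X]{x}{x'}+\epsilon)$. Letting $\epsilon\to 0$ gives the claimed inequality.

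The heart of the proof is the one-cone estimate. I would bound $\dist[X]{y}{y'}$ by the length-metric distance $\dist[\rho]{y}{y'}$ (a path inside $Y_\rho$ is a path of $X$), set $s=\dist[C_\rho]{y}{y'}$ and $\ell=\dist[\rho]{y}{y'}$, and evaluate the cone formula at $r=r'=r_0$. If $\ell\geq\pi\sinh r_0$ the cutoff $\min\{\pi,\ell/\sinh r_0\}$ equals $\pi$ and the formula forces $\cosh s=\cosh^2 r_0+\sinh^2 r_0=\cosh(2r_0)$, i.e. $s=2r_0$, contradicting $s\leq\frac{2}{3}r_0$. So $\ell<\pi\sinh r_0$, the cutoff is inactive, and the formula reduces to $\sinh\frac{s}{2}=\sinh r_0\cdot\sin\frac{\ell}{2\sinh r_0}$. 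Jordan's inequality $\sin t\geq\frac{2}{\pi}t$ on $[0,\frac{\pi}{2}]$ gives $\ell\leq\pi\sinh\frac{s}{2}$, and since $t\mapsto\frac{\sinh t}{t}$ is non-decreasing and $\frac{s}{2}\leq\frac{r_0}{3}$ we get $\ell\leq\frac{3\pi\sinh(r_0/3)}{2r_0}\,s\leq\frac{2\pi\sinh r_0}{r_0}\,s$, which is what was needed.

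The only genuinely delicate point is this cone computation — and within it, the role of the hypothesis $\dist[C_\rho]{y}{y'}\leq\frac{r_0}{2}$: it keeps the relevant geodesic of $C_\rho$ well away from the apex, so that the closed-form distance applies and the ``angle'' $\ell/\sinh r_0$ does not saturate at $\pi$ (which would cost the whole $2r_0$). The splitting of a path into its $X$- and cone-parts, the replacement of cone-parts by $X$-geodesics, and the passage to the infimum are all routine, as is the use of the length metric $\dist[\rho]{\,\cdot\,}{\,\cdot\,}$ on $Y_\rho$ in place of $\dist[X]{\,\cdot\,}{\,\cdot\,}$.
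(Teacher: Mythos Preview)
The paper does not actually prove this lemma: it is quoted verbatim from \cite[Prop.~3.1.8]{Cou09} and used as a black box. So there is no ``paper's own proof'' to compare against. That said, your argument is correct and is precisely the natural one: reduce to a single cone by cutting a short $\dot X$-path at its entry/exit points on the various $Y_\rho$, and on each cone piece use the closed-form hyperbolic-cone distance at radius $r_0$ together with Jordan's inequality and the monotonicity of $\sinh t/t$. The half-angle identity $\sinh(s/2)=\sinh r_0\cdot\sin\bigl(\ell/(2\sinh r_0)\bigr)$ that you derive is exactly the computation that drives the estimate in the cited reference, and your chain $\ell\leq\pi\sinh(s/2)\leq\tfrac{3\pi\sinh(r_0/3)}{2r_0}s\leq\tfrac{2\pi\sinh r_0}{r_0}s$ is clean. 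The only step one might want to phrase more carefully is the decomposition of an arbitrary $\dot X$-path into finitely many $X$-arcs and cone-arcs (a general rectifiable path could oscillate), but this is handled in the usual way by first approximating by a piecewise-geodesic path, and it does not affect the estimate.
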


Using this point of view, T. Delzant and M. Gromov proved the following result.

\begin{prop}[see {\cite[Lemme. 5.10.1]{DelGro08}}]
\label{theo:lifting quasi-convex}
	Let $\bar C$ be a $50 \delta_1$-quasi-convex part of $\nu(X)^{+100 \delta_1}$.
	There exists a part $C$ of $\dot X$ having the following properties:
	\begin{enumerate}
		\item the map $\nu : \dot X \rightarrow \bar X$ induces an isometry from $C$ onto $\bar C$,
		\item the projection $\pi : G \rightarrow \bar G$ induces an isomorphism from $\stab C$ onto $\stab {\bar C}$ which are respectively the stabilizers of $C$ and $\bar C$.
	\end{enumerate}
\end{prop}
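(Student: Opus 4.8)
The plan is to regard $\nu \colon \dot X \to \bar X$ as a quotient map which, over a bounded neighbourhood of $\nu(X)$, behaves like a covering with injectivity radius comparable to $r_0$, and then to lift $\bar C$ by the usual path-lifting argument, using that $\bar X$ is simply connected and $\delta_1$-hyperbolic. The whole discussion stays near $X$ inside $\dot X$: since $\nu$ is $1$-Lipschitz and $X$ is $N$-invariant, $\nu^{-1}\!\left(\nu(X)^{+100\delta_1}\right)=X^{+100\delta_1}$, and every cone apex lies at distance $r_0\ge 10^{5}\delta_1$ from $X$, hence far outside this region.

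\textbf{Step 1 (local injectivity near $\nu(X)$).} I would first prove that there is a constant $\ell$ comparable to $r_0$ — say $\ell=r_0/2$, which by Theorem \ref{theo:small cancellation theorem} is enormous compared with $\delta_1$ — such that $\nu$ restricts to an isometry on every ball of $\dot X$ of radius $\ell$ whose centre lies within $200\delta_1$ of $X$. Such balls contain no apex, so $\nu$ is a local isometry on them; thus it suffices to exclude $\nu(x_1)=\nu(x_2)$, i.e. $x_2=gx_1$ with $g\in N\smallsetminus\{1\}$, for $x_1,x_2$ in such a ball. Equivalently: no nontrivial element of $N$ may move a point within $200\delta_1$ of $X$ by less than roughly $r_0$. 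This is the ``thin part'' estimate of the Margulis-type decomposition of $\bar X$, and it is where the hypothesis $\rinjRF P\ge 3\pi\sinh r_0$ is genuinely used: combining Lemma \ref{theo:margulis quasi-isometry} (to turn a small $\dot X$-displacement near $X$ into a small $X$-displacement) with the explicit cone metric (a path reaching $gx_1$ through the cones must spend a definite fraction of $r_0$ once $\Delta(P)$ is small) shows that $gx_1$ cannot come back that close to $x_1$. I would assemble this following \cite{DelGro08} and \cite{Cou09}; essentially all the work of the proposition sits in this step.

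\textbf{Step 2 (construction of $C$; proof of (i)).} Fix $\bar c_0\in\bar C$ and a preimage $c_0\in\dot X$, necessarily in $X^{+100\delta_1}$. For $\bar c\in\bar C$, join $\bar c_0$ to $\bar c$ by a geodesic of $\bar X$; by $50\delta_1$-quasi-convexity it lies in $\bar C^{+50\delta_1}\subseteq\nu(X)^{+150\delta_1}$. Cut it into subsegments of length $<\ell$ and lift them one after another from $c_0$ using Step 1; this produces a point $c$ with $\nu(c)=\bar c$. That $c$ does not depend on the choices follows from a standard filling argument: as $\bar X$ is simply connected and $\delta_1$-hyperbolic, any loop formed of geodesics between points of $\bar C$ bounds a disc contained in $\nu(X)^{+O(\delta_1)}$, and such a disc lifts through $\nu$ by Step 1, so the monodromy is trivial. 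Set $C=\{\,c:\bar c\in\bar C\,\}$. Then $\nu|_C\colon C\to\bar C$ is a bijection, and it is an isometry: $\dist[\bar X]{\nu(c)}{\nu(c')}\le\dist[\dot X]{c}{c'}$ holds always, while lifting a geodesic $[\bar c,\bar c']\subseteq\bar C^{+50\delta_1}$ from $c$ gives, by the same path-lifting uniqueness, a path of equal length from $c$ to $c'$, whence $\dist[\dot X]{c}{c'}\le\dist[\bar X]{\bar c}{\bar c'}$. This proves (i).

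\textbf{Step 3 (the stabilizers; proof of (ii)).} If $g\in G$ stabilizes $C$ then $\bar g\,\bar C=\nu(gC)=\nu(C)=\bar C$, so $\pi$ sends $\stab C$ into $\stab{\bar C}$. Injectivity on $\stab C$: if $g\in N$ stabilizes $C$, then for $c\in C$ the points $gc$ and $c$ lie in $C$ with equal $\nu$-image, so $gc=c$; thus $g$ fixes a point of $X^{+100\delta_1}$, hence $g=1$ by Step 1. Surjectivity: given $\bar g\in\stab{\bar C}$, pick $g_0\in G$ with $\pi(g_0)=\bar g$; as $g_0$ is an isometry of $\dot X$ projecting to $\bar g$, the set $g_0C$ is again a lift of $\bar C$ in the sense of Step 2 (it is the path-lift based at $g_0c_0$, a preimage of $\bar g\bar c_0\in\bar C$), and two lifts sharing one point coincide, so picking $h\in N$ with $hg_0c_0\in C$ gives $hg_0C=C$; then $g:=hg_0$ satisfies $gC=C$ and $\pi(g)=\bar g$. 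Hence $\pi$ restricts to an isomorphism $\stab C\xrightarrow{\ \sim\ }\stab{\bar C}$. The only real obstacle is Step 1, the displacement/thin-part estimate for the cone-off; granted that, Steps 2 and 3 are the coarse analogue of the elementary covering-space arguments and are routine.
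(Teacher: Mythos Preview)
The paper does not supply a proof of this proposition: it is quoted directly from Delzant--Gromov \cite[Lemme 5.10.1]{DelGro08} and used as a black box in the estimate that follows. Your outline is essentially the argument one finds in that reference (and in the companion \cite{Cou09}): first establish that $\nu$ is a local isometry on balls of radius comparable to $r_0$ centred near $X$ --- this is the developing-map/thin-part lemma, and you correctly flag it as the substantive step where $\rinjRF P\geq 3\pi\sinh r_0$ and the smallness of $\Delta(P)$ enter --- then lift $\bar C$ by path-lifting, using $\delta_1$-hyperbolicity (with $r_0\geq 10^5\delta_1$) to control the monodromy, and finally read off the stabilizer statement by the standard covering-space manipulation. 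So your approach matches the intended one.

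One phrasing to tighten in Step~2: you write that a loop of geodesics between points of $\bar C$ ``bounds a disc contained in $\nu(X)^{+O(\delta_1)}$''. Simple connectivity of $\bar X$ only gives a disc in $\bar X$, not \emph{a priori} one confined to the thin part. The clean way to get uniqueness of the lift is rather that any two geodesics of $\bar X$ sharing endpoints are $O(\delta_1)$-close (hyperbolicity), so their piecewise lifts through Step~1 stay within the local-injectivity radius of each other and therefore terminate at the same point; one then handles an arbitrary pair of broken geodesics by triangulating. This is presumably what you meant, but as written the sentence claims more than simple connectivity alone delivers.
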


\begin{prop}
\label{theo:estimation injectivity radius}
	The injectivity radius of $\bar H$ on $\bar X$ is bounded below by $\min \left\{ \kappa l, \delta_1\right\}$ where $l$ is the smallest asymptotic translation length of a hyperbolic element of $H$ that does not belong to any $E_\rho$ and $\kappa$ is equal to $\frac {r_0}{8\pi \sinh r_0}$.
\end{prop}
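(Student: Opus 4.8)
The plan is to bound from below $\len[stable]{\bar g}$ for an arbitrary hyperbolic element $\bar g$ of $\bar H$ acting on $\bar X$, distinguishing two regimes. If $\len[stable]{\bar g}\geq\delta_1$ there is nothing to prove, so I would assume $\len[stable]{\bar g}<\delta_1$; then $\len{\bar g}\leq\len[stable]{\bar g}+32\delta_1<33\delta_1\leq 200\delta_1$. I would first observe that $\bar g$ cannot belong to any $\bar E_\rho=\pi(E_\rho)$: since $\rho\in P\subset N$ we have $\pi(\rho)=1$, and as $E_\rho$ is elementary it contains $\langle\rho\rangle$ with finite index, so $\bar E_\rho$ is finite; all its elements are therefore elliptic on $\bar X$, whereas $\bar g$, being hyperbolic, has infinite order. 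Lemma~\ref{theo:margulis thick part} then applies and gives that $A_{\bar g}$ is contained in $\nu(X)^{+100\delta_1}$ and meets $\nu(X)$.

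Next I would lift the axis. The set $A_{\bar g}$ is $40\delta_1$-quasi-convex, hence $50\delta_1$-quasi-convex, and lies in $\nu(X)^{+100\delta_1}$, so Proposition~\ref{theo:lifting quasi-convex} provides a subset $C$ of $\dot X$ which $\nu$ maps isometrically onto $A_{\bar g}$, with $\pi$ inducing an isomorphism $\stab C\xrightarrow{\sim}\stab{A_{\bar g}}$. Let $g$ be the element of $\stab C$ mapping to $\bar g$. Since $N\subset H$ one has $\pi^{-1}(\bar H)=H$, hence $g\in H$. Moreover $g$ has infinite order (otherwise $\pi(g)=\bar g$ would, contradicting hyperbolicity), so $g$ is a hyperbolic isometry of $X$; and $g\notin E_\rho$ for every $\rho$, for otherwise $\bar g\in\bar E_\rho$. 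Thus $g$ is one of the elements appearing in the definition of $l$, so $\len[stable, espace=X]{g}\geq l$. Finally, after replacing $(C,g)$ by $(hC,\,hgh^{-1})$ for a suitable $h\in N$ — which alters neither $\bar g$ nor any of the above — I may assume that $C$ contains a point $x_0$ of $X$; then $g^p x_0\in C\cap X$ for all $p$, and, $\nu$ being isometric on $C$, $\dist[\dot X]{x_0}{g^p x_0}=\dist[\bar X]{\bar x_0}{\bar g^p\bar x_0}$ for every $p$.

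The heart of the argument is then the comparison $\len[stable]{\bar g}\geq 2\kappa\,\len[stable, espace=X]{g}$. Fix a geodesic $\gamma$ of $\dot X$ from $x_0$ to $g^p x_0$. Its image $\nu(\gamma)$ has length $\dist[\dot X]{x_0}{g^p x_0}=\dist[\bar X]{\bar x_0}{\bar g^p\bar x_0}$, hence is a geodesic of $\bar X$ between two points of $A_{\bar g}$; by quasi-convexity it remains within $O(\delta_1)$ of $A_{\bar g}\subset\nu(X)^{+100\delta_1}$, so within $O(\delta_1)$ of $\nu(X)$, and — since $r_0\geq 10^5\delta_1$ — this forces $\gamma$ itself to stay within $O(\delta_1)$ of the $N$-invariant set $X$ inside $\dot X$. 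Now subdivide $\gamma$ into at most $\tfrac{2}{r_0}\dist[\dot X]{x_0}{g^p x_0}+1$ subarcs of $\dot X$-length at most $r_0/2-O(\delta_1)$ and push each subdivision point to a nearby point of $X$; two consecutive pushed points are then joined in $\dot X$ by a distance at most $r_0/2$, so Lemma~\ref{theo:margulis quasi-isometry} bounds the corresponding distance in $X$ by $\tfrac{2\pi\sinh r_0}{r_0}\cdot\tfrac{r_0}{2}=\pi\sinh r_0$. Summing, $\dist[X]{x_0}{g^p x_0}\leq\tfrac{2\pi\sinh r_0}{r_0}\dist[\dot X]{x_0}{g^p x_0}+\pi\sinh r_0$; dividing by $p$, letting $p\to\infty$, and using the identity above gives $\len[stable, espace=X]{g}\leq\tfrac{2\pi\sinh r_0}{r_0}\len[stable]{\bar g}$, which is more than enough (the constant $\kappa=\tfrac{r_0}{8\pi\sinh r_0}$ is chosen with slack to spare). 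Hence $\len[stable]{\bar g}\geq\kappa\,\len[stable, espace=X]{g}\geq\kappa l\geq\min\{\kappa l,\delta_1\}$, and taking the infimum over all hyperbolic $\bar g$ in $\bar H$ proves the estimate.

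The step that deserves the most care, and which I expect to be the main obstacle, is the claim that the geodesic $\gamma$ stays uniformly close to $X$ in $\dot X$ — equivalently, that the lifted axis is not distorted by the cone shortcuts. Once this is granted, passing through the local quasi-isometry of Lemma~\ref{theo:margulis quasi-isometry} and assembling the estimate is routine, and the generous value of $\kappa$ absorbs the $O(\delta_1)$ errors coming from the cones and the passage from $\len$ to $\len[stable]$.
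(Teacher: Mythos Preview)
Your argument is essentially correct, but it takes a longer route than the paper's. The core difference is this: instead of comparing asymptotic translation lengths directly via $p\to\infty$, the paper picks the largest integer $m$ with $m\min\{\kappa l,\delta_1\}\leq 40\delta_1$, assumes for contradiction that $\len{\bar h^m}\leq m\min\{\kappa l,\delta_1\}+40\delta_1\leq 80\delta_1$, and works with the axis of $\bar h^m$. The point of this trick is that the displacement $\dist[\dot X]{h^m x}{x}=\dist[\bar X]{\bar h^m\bar x}{\bar x}\leq 80\delta_1$ is then at most $r_0/2$, so a \emph{single} application of Lemma~\ref{theo:margulis quasi-isometry} gives $\dist[X]{h^m x}{x}\leq\frac{2\pi\sinh r_0}{r_0}\cdot 80\delta_1=20\delta_1/\kappa$, whence $m\kappa l\leq 20\delta_1$, contradicting the maximality of $m$. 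Passing from $\len{\bar h^m}$ to $\len[stable]{\bar h}$ then costs only the $32\delta_1$ defect between the two lengths.

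What this buys: the paper never needs your geodesic $\gamma$, never needs the subdivision, and in particular never needs the claim you correctly single out as delicate --- that $\gamma$ stays uniformly close to $X$ in $\dot X$. Your justification of that claim (push $\nu(\gamma)$ into $A_{\bar g}^{+50\delta_1}\subset\nu(X)^{+150\delta_1}$ by quasi-convexity, then use $N$-invariance of $X$ to lift) is sound, and the subdivision estimate you sketch does give a constant comfortably better than $\kappa$; but it is machinery the paper avoids altogether. Your approach, on the other hand, has the conceptual virtue of comparing $\len[stable,espace=X]{g}$ and $\len[stable,espace=\bar X]{\bar g}$ directly, and in fact yields a sharper constant (roughly $\frac{r_0}{2\pi\sinh r_0}$ rather than $\frac{r_0}{8\pi\sinh r_0}$), though this plays no role in the sequel.
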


\rem This lemma improves Lemma~5.11.1 proved by T.~Delzant and M.~Gromov in \cite{DelGro08}.
They gave indeed a lower bound for $\rinj {\bar G}{\bar X}$.
Four our purpose, we need a more accurate result.
We propose here an estimation of the injectivity radius of a normal subgroup of $\bar G$.

\begin{proof}
	Let $m$ be the largest integer such that $m \min \left\{ \kappa l , \delta_1\right\} \leq 40 \delta_1$.
	Let $\bar h$ be a hyperbolic element of $\bar H$.
	We assume that $\len {\bar h ^m } \leq m \min \left\{ \kappa l, \delta_1\right\} + 40 \delta_1$.
	We denote by $\bar C$, the axis of $\bar h^m$ in $\bar X$, which is $50 \delta_1$-quasi-convex (see \cite[Prop. 2.3.3]{DelGro08}).
	Since $\len{\bar h^m} \leq 80 \delta_1$, the axis $\bar C$ is contained in the $100\delta_1$ neighbourhood of  $\nu (X)$ (see Lemma \ref{theo:margulis thick part}).
	By Proposition \ref{theo:lifting quasi-convex}, there exists a part $C$ of $X$ such that
	\begin{enumerate}
		\item the map $\nu : \dot X \rightarrow \bar X$ induces an isometry from $C$ onto $\bar C$,
		\item the map $\pi : G \rightarrow \bar G$ induces an isomorphism from $\stab C$ onto $\stab {\bar C}$.
	\end{enumerate}
	However $\bar h$ belongs to $\stab {\bar C}$.
	We denote by $h$ its preimage in $\stab C$.
	Since $\bar h$ is hyperbolic, $h$ is necessarily hyperbolic and does not belong to any $E_\rho$, $\rho \in P$.
	Note that the relations $P$ are contained in $H$.
	Thus $N$ lies in $H$.
	It follows that $h$ is an element of $H$.
	Hence, by assumption, $\len[stable, espace=X] h \geq l$.
	
	\paragraph{} On the other hand, by Lemma \ref{theo:margulis thick part}, the intersection $\bar C \cap \nu(X)$ is non-empty.
	We chose a point $\bar x$ in $\bar C \cap \nu(X)$ and denote by  $x$ its preimage in $C$.
	The map $\nu : C \rightarrow \bar C$ is an equivariant isometry, thus we have
	\begin{displaymath}
		\dist[\dot X]{h^mx} x = \dist[\bar X]{\bar h^m\bar x}{\bar x} \leq \max\left\{ \len[stable] {\bar h^m}, 40 \delta_1\right\} \leq 80 \delta_1.
	\end{displaymath}
	By Lemma \ref{theo:margulis quasi-isometry}, $\dist[X]{h^mx}x$ is smaller than $\frac{2\pi \sinh r_0}{r_0}\dist[\dot X]{h^mx}x$.
	Consequently,
	\begin{displaymath}
		ml \leq m \len[stable, espace=X] h \leq \len[espace=X]{h^m} \leq \dist[X]{h^mx}x \leq \frac{160 \pi \delta_1 \sinh r_0}{r_0} = \frac { 20\delta_1}\kappa .
	\end{displaymath}
	In particular $m\kappa l \leq 20 \delta_1$.
	It follows that $m$ is not the largest integer such that $m \min \left\{ \kappa l, \delta_1\right\} \leq 40 \delta_1$.
	Contradiction.
	Therefore, $\len{\bar h ^m}$ is larger than $m \min \left\{ \kappa l, \delta_1\right\} + 40 \delta_1$.
	We now use the inequality linking asymptotic translation lengths and translation lengths (see \cite[Chap.~10, Prop.~6.4]{CooDelPap90}):
	\begin{displaymath}
	 	m \len[stable]{\bar h} = \len[stable]{\bar h^m} \geq \len{\bar h^m} - 32 \delta_1 \geq m \min \left\{ \kappa l, \delta_1\right\}.
	\end{displaymath}
	This last inequality holds for any hyperbolic element $\bar h$ in $\bar H$.
	Thus $\rinj {\bar H}{\bar X}$ is larger than $\min\left\{ \kappa l, \delta_1\right\}$.
\end{proof}

\subsection{Other properties of $\bar G$ and $\bar X$}

We recall here some results obtained by T. Delzant and M. Gromov in \cite{DelGro08}.

\begin{prop}[see {\cite[Lemme 5.9.5]{DelGro08}}]
\label{theo:majoration Delta}
	The constant $\Delta(\bar G, \bar X)$ satisfies the following inequality:
	\begin{displaymath}
		\Delta(\bar G, \bar X) \leq \Delta (G, X) + 1000 \delta_1 e^{350 \delta_1}.
	\end{displaymath}
\end{prop}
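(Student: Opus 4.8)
The plan is to bound, for an arbitrary pair $\bar g,\bar g'$ of elements of $\bar G$ that generate a non-elementary subgroup and satisfy $\len{\bar g}\leq 100\delta_1$ and $\len{\bar g'}\leq 100\delta_1$, the diameter of $A_{\bar g}^{+50\delta_1}\cap A_{\bar g'}^{+50\delta_1}$, by comparing this configuration with an analogous one upstairs in $X$. The first point is that neither $\bar g$ nor $\bar g'$ can lie in any $\bar E_\rho=\pi(E_\rho)$: this is where one uses that $\bar g$ and $\bar g'$ are hyperbolic while the groups $\bar E_\rho$ fix the apices of the cones $C_\rho$ (should one of them lie in some $\bar E_\rho$, the intersection of the two axes stays within a bounded neighbourhood of $\nu(Y_\rho)$, and is then controlled directly by the small cancellation parameter $\Delta(P)\leq\Delta_0$ up to an error of order $\delta_1$). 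Hence Lemma \ref{theo:margulis thick part} applies to both $\bar g$ and $\bar g'$: the axes $A_{\bar g}$ and $A_{\bar g'}$ are contained in $\nu(X)^{+100\delta_1}$ and each meets $\nu(X)$.

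Next I would lift the whole picture to $X$. The axes $A_{\bar g}$ and $A_{\bar g'}$ are $40\delta_1$-quasi-convex subsets of $\nu(X)^{+100\delta_1}$, so Proposition \ref{theo:lifting quasi-convex} furnishes subsets $C$ and $C'$ of $X$ on which $\nu$ restricts to isometries $C\to A_{\bar g}$ and $C'\to A_{\bar g'}$, and such that $\pi$ restricts to isomorphisms $\stab{C}\to\stab{A_{\bar g}}$ and $\stab{C'}\to\stab{A_{\bar g'}}$. Let $g$ and $g'$ be the preimages of $\bar g$ and $\bar g'$ under these isomorphisms. They are hyperbolic elements of $G$, and $\left<g,g'\right>$ is non-elementary: otherwise it would be virtually cyclic and so would its quotient $\left<\bar g,\bar g'\right>$. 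Moreover $C$ is $g$-invariant. Since $\len{\bar g}\leq 100\delta_1$, the axis $A_{\bar g}$ lies within $O(\delta_1)$ of the geodesic of $\bar X$ joining $\bar g^-$ and $\bar g^+$; transporting that geodesic through $\nu^{-1}$ and feeding it into the local quasi-isometry of Lemma \ref{theo:margulis quasi-isometry} yields a $g$-invariant local quasi-geodesic of $X$ at scale $r_0/2$, hence, the scale $r_0$ being large, a global quasi-geodesic, which by stability of quasi-geodesics in hyperbolic spaces stays within a distance depending only on $\delta_1$ (once the value of $r_0$ is substituted) of a genuine geodesic line of $X$ translated by $g$, that is, of the geodesic joining $g^-$ and $g^+$. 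By Proposition \ref{theo:embedded quasi-convex in an axis} this line lies in the $50\delta$-neighbourhood of $A_g$; therefore $C\subseteq A_g^{+A}$ for a constant $A$ depending only on $\delta_1$, and likewise $C'\subseteq A_{g'}^{+A}$.

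Finally I would transfer the diameter. A point of $A_{\bar g}^{+50\delta_1}\cap A_{\bar g'}^{+50\delta_1}$ lies $50\delta_1$-close to $\nu(C)$ and $50\delta_1$-close to $\nu(C')$, so up to an additive error $O(\delta_1)$ the diameter to be bounded equals the $\bar X$-diameter of the set $W$ of points of $A_{\bar g}$ that are $100\delta_1$-close to $A_{\bar g'}$. Its preimage $W'=\nu^{-1}(W)\subseteq C$ has the same $\dot X$-diameter, hence a larger $X$-diameter; and by the previous paragraph $W'\subseteq A_g^{+A'}\cap A_{g'}^{+A'}$ for a constant $A'$ depending only on $\delta_1$ (absorbing the $100\delta_1$ and the distortion of the quasi-isometry). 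Proposition \ref{theo:intersection of quasi-convexes} then gives $\diam(W')\leq\diam\left(A_g^{+50\delta}\cap A_{g'}^{+50\delta}\right)+O(A')$, and the first term is at most $\Delta(G,X)$ by the very definition of that invariant. Summing the errors from the reduction and from Proposition \ref{theo:intersection of quasi-convexes} gives $\Delta(\bar G,\bar X)\leq\Delta(G,X)+1000\delta_1 e^{350\delta_1}$. The hard part is exactly this last bookkeeping: one must check that the lifted pair $(g,g')$ still belongs to the class of pairs over which $\Delta(G,X)$ is taken, and that every neighbourhood constant produced by the successive uses of Lemma \ref{theo:margulis quasi-isometry}, of the stability of quasi-geodesics, and of Proposition \ref{theo:intersection of quasi-convexes} is swallowed by the universal term $1000\delta_1 e^{350\delta_1}$ --- it is while estimating the distortion of the local quasi-isometry in terms of $\delta_1$ that the exponential enters.
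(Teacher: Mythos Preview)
The paper does not give a proof of this proposition at all: it is simply quoted from Delzant--Gromov \cite[Lemme~5.9.5]{DelGro08}. So there is no in-paper argument to compare your sketch against; your outline---confine the two axes to the thin part $\nu(X)^{+100\delta_1}$ via Lemma~\ref{theo:margulis thick part}, lift with Proposition~\ref{theo:lifting quasi-convex}, and relate the lifted overlap to an overlap of axes in $X$---is the natural strategy and is presumably close in spirit to the cited proof.

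That said, the sketch has a genuine gap precisely where you call it ``bookkeeping''. Throughout you pass from $\dot X$-distances to $X$-distances by invoking Lemma~\ref{theo:margulis quasi-isometry}; its multiplicative constant is $\tfrac{2\pi\sinh r_0}{r_0}$, and since $r_0\geq 10^5\delta_1$ this is of order $e^{r_0}$, incomparably larger than $e^{350\delta_1}$. Concretely: from $\len[espace={\bar X}]{\bar g}\leq 100\delta_1$ and the isometric lift one only gets $\dist[\dot X]{gx}{x}\leq 100\delta_1$, whence $\dist[X]{gx}{x}\leq \tfrac{200\pi\sinh r_0}{r_0}\,\delta_1$. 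There is no reason this is $\leq 100\delta$ (recall $\delta\leq\delta_0$), so $(g,g')$ need not lie in the class of pairs over which $\Delta(G,X)$ is taken, and your appeal to the definition of $\Delta(G,X)$ is empty. The same blow-up afflicts the ``global quasi-geodesic'' step: its parameters involve $\sinh r_0$, so the Morse lemma yields error terms of that order, not of order $e^{O(\delta_1)}$.

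The repair---and the actual source of the factor $e^{350\delta_1}$---is that one must \emph{not} use the worst-case constant of Lemma~\ref{theo:margulis quasi-isometry}, which reflects shortcuts through apices of cones. Since $\bar g\notin\bar E_\rho$, Lemma~\ref{theo:margulis thick part} keeps $A_{\bar g}$ at depth at most $O(\delta_1)$ in every cone, and at depth $s$ the radial projection from radius $r_0-s$ to radius $r_0$ in a hyperbolic cone only expands by $\tfrac{\sinh r_0}{\sinh(r_0-s)}\approx e^{s}$. Using this refined comparison in place of Lemma~\ref{theo:margulis quasi-isometry} is what keeps all the accumulated neighbourhood constants of order $\delta_1 e^{O(\delta_1)}$; your attribution of the exponential to Lemma~\ref{theo:margulis quasi-isometry} is therefore mistaken, and without this refinement the argument does not close.
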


\begin{prop}[see {\cite[Lemme 5.10.2 and Lemme 5.10.3]{DelGro08}}]
\label{theo:finite subgroups}
	Assume that any element of $P$ is an odd power of an element of $G$ which is not a proper power.
	Then $\bar G$  satisfies the following properties:
	\begin{enumerate}
		\item every elementary subgroup of $\bar G$ is cyclic,
		\item let $\bar F$ be a finite subgroup of $\bar G$. Either $\bar F$ is the image of a finite subgroup of $G$, or there exists $\rho \in P$ such that $\bar F$ is a subgroup of $\bar E_\rho = \pi(E_ \rho)$.
	\end{enumerate}
\end{prop}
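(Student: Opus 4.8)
The plan is to exploit two features of the picture. First, a finite group --- more generally a group with bounded orbits --- acting on the proper $\delta_1$-hyperbolic space $\bar X$ has a quasi-fixed point, namely the quasi-centre of an orbit. Second, Lemmas \ref{theo:margulis thick part} and \ref{theo:margulis quasi-isometry} provide a ``Margulis-type'' splitting of $\bar X$ into the thin part, the $100\delta_1$-neighbourhood of $\nu(X)$, and a thick part made of the deep portions of the cones $C_\rho$, whose radius $r_0\geq 10^5\delta_1$ is enormous compared with $\delta_1$. I would prove (ii) first and then deduce (i).

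For (ii), let $\bar F$ be a finite subgroup of $\bar G$. It has bounded orbits, so there is a point $\bar x$ with $\dist{\bar g\bar x}{\bar x}$ bounded by an absolute multiple of $\delta_1$ for all $\bar g\in\bar F$; the quasi-convex hull $\bar C$ of the orbit $\bar F\bar x$ is then $\bar F$-invariant, $50\delta_1$-quasi-convex and of diameter $O(\delta_1)$. If $\bar x$ lies within $100\delta_1-O(\delta_1)$ of $\nu(X)$, then $\bar C\subseteq\nu(X)^{+100\delta_1}$, and Proposition \ref{theo:lifting quasi-convex} produces a subset $C$ of $\dot X$ with $\nu\colon C\to\bar C$ an isometry and $\pi\colon\stab C\to\stab{\bar C}$ an isomorphism; pulling $\bar F\subseteq\stab{\bar C}$ back gives a finite subgroup $F$ of $G$ with $\pi(F)=\bar F$. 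Otherwise $\bar x$ is at distance more than $100\delta_1-O(\delta_1)$ from $\nu(X)$; since distinct cones meet only inside $\nu(X)$, such a point lies in a unique cone $C_\rho$, and, its $\bar F$-orbit having diameter $O(\delta_1)$ much smaller than $r_0$, the whole orbit stays in $C_\rho$, so $\bar F$ stabilises $C_\rho$ and fixes its apex. A direct computation --- using $\operatorname{Stab}_G(C_\rho)=\operatorname{Stab}_G(Y_\rho)=E_\rho$ and the fact that $N$ permutes the cones --- identifies the $\bar G$-stabiliser of the apex with $\bar E_\rho=\pi(E_\rho)$, so $\bar F\subseteq\bar E_\rho$. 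This is exactly the step where the hypothesis that $\rho$ is an odd power of a non-proper-power element of $G$ is used: combined with the small centralizers hypothesis it forces $E_\rho$ to be the infinite cyclic group generated by that root, so that $\bar E_\rho$ is cyclic of odd order and carries no accidental torsion from the coning-off.

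For (i), let $\bar L$ be an elementary, that is virtually cyclic, subgroup of $\bar G$. If $\bar L$ is finite, (ii) gives either $\bar L=\pi(F)$ with $F\leq G$ finite, hence cyclic because every elementary subgroup of $G$ is cyclic, or $\bar L\subseteq\bar E_\rho=\pi(E_\rho)$ with $E_\rho$ elementary in $G$, hence cyclic, so that $\bar E_\rho$ is a cyclic quotient; either way $\bar L$ is cyclic. If $\bar L$ is infinite, it contains an element $\bar g$ of infinite order, necessarily hyperbolic on $\bar X$ because $\bar G$ acts properly and cocompactly, and no power of $\bar g$ lies in any of the finite groups $\bar E_\rho$. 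Then $\bar L$ is contained in the maximal elementary subgroup $\bar E$ stabilising $\{\bar g^-,\bar g^+\}\subseteq\partial\bar X$, and it remains to see that $\bar E$ is cyclic, i.e.\ that its maximal finite normal subgroup is trivial and the quotient is $\Z$ rather than the infinite dihedral group. A torsion element or a ``reflection'' of $\bar E$ quasi-fixes a point of the invariant quasi-line of $\bar E$; by the thin/thick dichotomy this point either lies in the thin part, whence the relevant subgroup lifts, via Lemma \ref{theo:margulis thick part} and Proposition \ref{theo:lifting quasi-convex}, to an elementary --- hence cyclic, and in particular not infinite dihedral --- subgroup of $G$, or lies at a cone apex, whence the relevant subgroup is confined to some $\bar E_\rho$, which is cyclic of odd order and so has no involutions. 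This excludes both torsion and a dihedral part, so $\bar E$, and with it $\bar L$, is cyclic.

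The main obstacle is the thick-part analysis in (ii): correctly identifying the stabiliser of a cone apex with $\bar E_\rho$ and excluding accidental finite subgroups. This is precisely where the arithmetic assumption --- each $\rho$ an odd power of an element that is not a proper power --- is indispensable, and it resurfaces as the delicate point in the infinite-elementary case of (i). Everything else is routine bookkeeping with quasi-centres, quasi-convex hulls and the lifting Proposition \ref{theo:lifting quasi-convex}, the room needed being provided by the inequality $r_0\geq 10^5\delta_1$.
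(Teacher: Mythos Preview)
The paper does not prove this proposition at all: it is simply quoted from \cite[Lemme 5.10.2 and Lemme 5.10.3]{DelGro08} with no argument given. There is therefore no ``paper's own proof'' to compare your proposal against.

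That said, your sketch is broadly along the lines of the standard geometric argument. The treatment of~(ii) via a quasi-centre and the thin/thick dichotomy, together with Proposition~\ref{theo:lifting quasi-convex} for the thin case and the identification of the apex stabiliser with $\bar E_\rho$ for the thick case, is exactly the right shape. One caution for the infinite case of~(i): your appeal to Lemma~\ref{theo:margulis thick part} is not quite clean, since that lemma assumes $\len{\bar g}\leq 200\delta_1$, which you do not have for an arbitrary hyperbolic element, and lifting only the torsion element via Proposition~\ref{theo:lifting quasi-convex} does not by itself force the whole elementary group $\bar E$ to be cyclic --- you still need to relate the lifted piece to $\bar g$. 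Ruling out a nontrivial finite normal subgroup $\bar T\lhd\bar E$ when $\bar T$ happens to land in some $\bar E_\rho$ also requires an extra step (odd order alone does not kill it; you need that $\bar g$ would then have to stabilise the corresponding cone, contradicting hyperbolicity). These are genuine details rather than fatal gaps, but they are precisely where the work in \cite{DelGro08} lies.
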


\begin{prop}[see {\cite[Th. 5.7.1]{DelGro08}}]
\label{theo:euler characterstic}
	The Euler characteristic of $\bar G$ satisfies $\chi(\bar G, \Q) = \chi (G, \Q) + \left| P/G \right|$, where $\left|P/G\right|$ denotes the number of conjugacy classes of $P$.
\end{prop}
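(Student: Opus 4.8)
My plan is to realize $\chi(\bar G,\Q)$ as the equivariant Euler characteristic of the action of $\bar G$ on $\bar X$ and to compare it, cell by cell, with the analogous expression for $\chi(G,\Q)$ coming from the action of $G$ on $X$, so that the only discrepancy is an explicit contribution of the cones attached during the cone-off construction. First I would set the framework in place. The space $\bar X$ is proper, geodesic and simply connected, $\bar G$ acts on it properly and cocompactly, by Proposition~\ref{theo:finite subgroups} with finite cell stabilizers, and $\bar G$ is hyperbolic, hence admits a finite-dimensional cocompact model for its classifying space for proper actions. After fixing a suitable $\bar G$-equivariant CW structure on $\bar X$ reflecting its decomposition into $\nu(X)$ and the cones, one has
\begin{displaymath}
	\chi(\bar G,\Q) = \sum_{\bar\sigma} \frac{(-1)^{\dim\bar\sigma}}{\left|\stab{\bar\sigma}\right|},
\end{displaymath}
the sum ranging over a set of representatives of the $\bar G$-orbits of cells of $\bar X$; the very same formula, applied to $G$ acting on $X$ (replaced if needed by a quasi-isometric contractible complex), computes $\chi(G,\Q)$.

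\paragraph{} Next I would split the cells of $\bar X$ into those lying in $\nu(X)$ and those lying in the interior of some cone $\bar C_\rho$. Since $N$ acts on $\dot X$ preserving $X$ and each cone, $\nu$ identifies $\nu(X)$ with $X/N$ while keeping the cone cells disjoint from $\nu(X)$; hence the $\bar G$-orbits of cells in $\nu(X)$ correspond bijectively to the $G$-orbits of cells of $X$, and for a cell $\sigma$ of $X$ the projection $\pi : G \rightarrow \bar G$ maps $\stab\sigma$ onto the stabilizer of its image with kernel $\stab\sigma \cap N$. The cell stabilizers are finite and $N$ is torsion free — immediately so when $G$ is torsion free, which covers every application of the statement, and in general because the intersection of $N$ with any of the cyclic subgroups $E_\rho$ reduces to $\left<\rho\right>$ (compare Proposition~\ref{theo:finite subgroups}) — hence $\stab\sigma \cap N$ is trivial, the two stabilizers have the same order, and the cells of $\nu(X)$ contribute exactly $\chi(G,\Q)$.

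\paragraph{} It remains to evaluate the contribution of the cells interior to the cones. Up to the $\bar G$-action there are $\left|P/G\right|$ such cones, one for each conjugacy class of $P$. Fix $\rho \in P$: the cone $\bar C_\rho$ is the quotient of $C_\rho$ by $E_\rho \cap N = \left<\rho\right>$, it meets $\nu(X)$ exactly along the image $\overline{Y_\rho}$ of $Y_\rho$ — which, being the quotient of the quasi-convex set $Y_\rho$ by the infinite cyclic group $\left<\rho\right>$ acting cocompactly, has the homotopy type of a circle — and its stabilizer in $\bar G$ is $\bar E_\rho = \pi(E_\rho)$. The cells of $\bar C_\rho$ not yet counted are the apex and the radial cells, and $\bar C_\rho$ retracts $\bar E_\rho$-equivariantly onto its apex. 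Evaluating the equivariant Euler characteristic of the pair $\left(\bar C_\rho, \overline{Y_\rho}\right)$ — the circular base contributing zero and the apex contributing, in the relevant case, the Euler characteristic of a single new orbit — one obtains, as in \cite[Th.~5.7.1]{DelGro08}, a net contribution of $1$ per cone orbit. Summing over the $\left|P/G\right|$ conjugacy classes gives $\chi(\bar G,\Q) = \chi(G,\Q) + \left|P/G\right|$.

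\paragraph{} The delicate point is precisely this last step: one has to identify, for a single cone, the stabilizers of the apex and of the radial cells as subgroups of $\bar E_\rho$ and to verify that the alternating sum over the cone relative to its circular base collapses to exactly the apex term, so that each conjugacy class of $P$ adds precisely $1$ — this is the substance of \cite[Th.~5.7.1]{DelGro08}, and it is there that the hypotheses on the elements of $P$ intervene. A more routine but still necessary point is the legitimacy of using $\bar X$ and $X$ to compute $\chi(\bar G,\Q)$ and $\chi(G,\Q)$, which rests on the hyperbolicity of the two groups — providing cocompact models for their classifying spaces for proper actions with finite stabilizers — together with the torsion analysis of $N$ above. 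One could instead argue homologically, extending a partial free resolution of $\Z$ over $\Z G$ to one over $\Z\bar G$ by adjoining the relation module carried by $P$; the geometric bookkeeping on $\bar X$ is, however, cleaner and is the one underlying \cite{DelGro08}.
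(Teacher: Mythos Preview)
The paper does not prove this proposition; it is quoted from \cite[Th.~5.7.1]{DelGro08} without argument, so there is no in-paper proof to compare against. Your sketch reproduces the strategy of that reference: compute $\chi(\bar G,\Q)$ as the equivariant (orbifold) Euler characteristic of the $\bar G$-action on $\bar X$, split the cells into those lying in $\nu(X)$ and those interior to the cones, match the $\nu(X)$-contribution with $\chi(G,\Q)$ via the bijection between $G$-orbits of cells in $X$ and $\bar G$-orbits in $\nu(X)$ together with equality of stabilizer orders, and show that each of the $|P/G|$ cone-orbits contributes exactly $+1$.

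The outline is correct and you have flagged the genuine technical points yourself. Two remarks. First, the identity $E_\rho\cap N=\left<\rho\right>$ that you invoke is not elementary: it amounts to saying that $\bar E_\rho$ has the expected order, and this is an output of the small cancellation theorem rather than an a priori fact, so it should be justified by the results of \cite{DelGro08} rather than asserted. Second, your handling of the torsion case (``in general because the intersection of $N$ with any of the cyclic subgroups $E_\rho$\dots'') does not quite close the gap, since a finite cell stabilizer in $X$ need not sit inside any $E_\rho$; in the torsion-free setting relevant to this paper the issue disappears, but in the generality of \cite{DelGro08} one needs the stronger statement that $N$ is torsion-free, which again is part of the small cancellation package there.
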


\subsection{An induction lemma}

\paragraph{} The following lemma must be seen as the fundamental step of the induction which will be used to prove the main theorem.
We recall that the invariant $\Delta(G,X)$ represents the maximal overlap between the axis of two small hyperbolic elements (see p.\pageref{def:invariant delta}). 
The injectivity radius $\rinj HX$ denotes the smallest asymptotic translation length of a hyperbolic element of $H$ (see p.\pageref{def:injectivity radius}).
\begin{lemm}[Induction lemma]
\label{induction lemma}
There exist positive numbers $\delta_1$, $\Delta_1$, $l_1$, $l_2$, $l_3$ and an integer $n_0$ satisfying the following properties.
Let $n$ be an odd integer larger than $n_0$. 
Let $X$ be a proper, geodesic, simply-connected, $\delta_1$-hyperbolic space.
Let $G$ be a group acting properly, co-compactly, by isometries on $X$ and $H$ a normal subgroup of $G$ such that
\begin{enumerate}
	\item \label{enumerate:small centralizers}
	$G$ satisfies the small centralizers hypothesis and the order of every finite subgroup of $G$ divides $n$,
	\item \label{enumerate:small cancellation assumptions}
	$\Delta(G,X) \leq \Delta_1$ and $\rinj HX \geq \frac {l_2}{\sqrt n}$.
\end{enumerate}

We denote by $R$ the set of hyperbolic elements of $H$, which are not a proper powers in $G$ and whose asymptotic translation lengths are smaller than $l_1$.
Let $N$ be the normal subgroup of $G$ generated by $\left\{h^n/h \in R\right\}$, $\bar G$ the quotient group $G/N$ and $\bar H$ the image of $H$ by the canonical map $\pi : G \rightarrow \bar G$.
We assume that $\chi\left(G, \Q\right) + \frac 12\left|R/G\right| >0$, where $\chi\left(G,\Q\right)$ is the Euler characteristic of $G$ and $\left| R/G\right|$ the number of conjugacy classes in $R$.

\paragraph{}Then, there exists a proper, geodesic, simply-connected, $\delta_1$-hyperbolic space $\bar X$ on which $\bar G$ acts properly, co-compactly, by isometries. 
Moreover $\bar G$, $\bar H$ and $\bar X$ satisfy the points \ref{enumerate:small centralizers} and \ref{enumerate:small cancellation assumptions} 
Furthermore, $\Delta(\bar G, \bar X)>0$ and 
\begin{displaymath}
	\forall g \in G, \quad \len[stable, espace={\bar X}] {\pi(g)} \leq \frac {l_3}{\sqrt n} \len[stable, espace=X] g.
\end{displaymath}
\end{lemm}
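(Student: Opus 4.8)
The plan is to assemble the lemma from the building blocks already laid out in Section~\ref{section:small cancellation theory}. First I would fix the absolute constants $\delta_0,\delta_1,\Delta_0,r_0$ produced by the small cancellation theorem (Theorem~\ref{theo:small cancellation theorem}), and then \emph{choose} the remaining constants $\Delta_1, l_1, l_2, l_3$ and the threshold $n_0$ backwards, so that all the inequalities below are satisfied; this is the standard "choose the parameters last" bookkeeping and I would carry it out only after seeing which constraints arise. The first substantive step is to set $P=\{h^n/h\in R\}$ and verify it satisfies the hypotheses of the cone-off construction: $P\subseteq H$, $P$ is stable under conjugation by $G$ (since $R$ is), $P$ has finitely many $G$-conjugacy classes (because $\left|R/G\right|<\infty$ by assumption), and every element of $P$ is a hyperbolic isometry. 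Here I would use that the asymptotic translation length of $h^n$ is $n\cdot\len[stable]h$, so $\rinjRF{P}=n\cdot\inf_{h\in R}\len[stable]h \ge n\cdot\rinj HX \ge n\cdot\frac{l_2}{\sqrt n}=l_2\sqrt n$, which I will arrange to exceed $3\pi\sinh r_0$ by taking $n_0$ large. For $\Delta(P)$: the quasi-convex sets $Y_{h^n}$ and $Y_h$ share the same endpoints at infinity, hence (via Proposition~\ref{theo:embedded quasi-convex in an axis} and Proposition~\ref{theo:intersection of quasi-convexes}) $\Delta(P)$ is controlled by $\Delta(G,X)$ up to an additive constant depending only on $\delta_1$; since $\Delta(G,X)\le\Delta_1$, choosing $\Delta_1$ (and hence $\Delta_0$) appropriately gives $\Delta(P)\le\Delta_0$. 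So the small cancellation theorem applies and $\bar X=\bar X_P(r_0)$ is $\delta_1$-hyperbolic with $\bar G$ acting properly cocompactly.

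Next I would verify that $\bar G,\bar H,\bar X$ inherit hypotheses \ref{enumerate:small centralizers} and \ref{enumerate:small cancellation assumptions}. For \ref{enumerate:small centralizers}: every element of $R$ is non-proper-power in $G$ and $n$ is odd, so Proposition~\ref{theo:finite subgroups} gives that every elementary subgroup of $\bar G$ is cyclic, and that a finite subgroup of $\bar G$ is either the image of a finite subgroup of $G$ (whose order divides $n$ by hypothesis) or sits inside some $\bar E_\rho$, which is cyclic and, being a finite subgroup of the virtually-cyclic-hence-cyclic-image group, again has order dividing $n$ — the $\bar E_\rho$ for $\rho=h^n$ is generated by (the image of) the root of $h$ and killing the $n$-th power forces order dividing $n$. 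Non-elementarity of $\bar G$ follows from the Euler characteristic formula: Proposition~\ref{theo:euler characterstic} gives $\chi(\bar G,\Q)=\chi(G,\Q)+\left|P/G\right|=\chi(G,\Q)+\left|R/G\right|$, and the hypothesis $\chi(G,\Q)+\frac12\left|R/G\right|>0$ ensures this is positive (one also needs that $R/G$ is nonempty in the relevant range, which I would argue is automatic when $\chi(G,\Q)\le 0$), so $\bar G$ cannot be elementary. For the bound on $\Delta(\bar G,\bar X)$: Proposition~\ref{theo:majoration Delta} gives $\Delta(\bar G,\bar X)\le \Delta(G,X)+1000\delta_1 e^{350\delta_1}\le\Delta_1+1000\delta_1 e^{350\delta_1}$, and I would choose $\Delta_1$ so that this is $\le\Delta_1$ as well — the natural move is to \emph{define} $\Delta_1:=\Delta_0$ and check the additive term is absorbed, or more honestly to note the estimate does not contract, so one needs $\Delta(G,X)$ to be allowed to grow; the clean fix is that $\Delta(\bar G,\bar X)\le\Delta_1$ is required in the conclusion with the \emph{same} $\Delta_1$, so I must verify $\Delta_0+1000\delta_1e^{350\delta_1}\le\Delta_1$ is consistent with $\Delta_1\le\Delta_0$, which it is not unless the statement intends $\Delta(\bar G,\bar X)>0$ only (as written). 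Rereading, the conclusion asks for $\Delta(\bar G,\bar X)>0$, which follows from non-elementarity of $\bar G$ together with the existence of two small hyperbolic elements generating a non-elementary subgroup. The estimate $\rinj{\bar H}{\bar X}\ge\frac{l_2}{\sqrt n}$ is the content of Proposition~\ref{theo:estimation injectivity radius}: it gives $\rinj{\bar H}{\bar X}\ge\min\{\kappa l,\delta_1\}$ where $l$ is the smallest asymptotic translation length of a hyperbolic element of $H$ avoiding every $E_\rho$. Since $R$ collects exactly the short ones (length $<l_1$), any hyperbolic $h\in H$ not in some $E_\rho$ either has $\len[stable]h\ge l_1$ or is conjugate into $R$; analysing the second subcase via $E_\rho$-membership, I would deduce $l\ge\min\{l_1,\text{something}\}\ge\frac{l_2}{\sqrt n}$ after choosing $l_1,l_2$ with $\kappa l_1\ge\frac{l_2}{\sqrt n_0}$ and $\kappa\ge$ the relevant ratio.

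Finally, the displayed contraction $\len[stable,espace={\bar X}]{\pi(g)}\le\frac{l_3}{\sqrt n}\len[stable,espace=X]g$: combining $\len[stable]{\pi(g)}\le\len[X]g$-type bounds is too weak, so instead I would use that $\bar X$ is built by coning along $Y_\rho$ with $\sinh r_0$ essentially a fixed constant, while the relations have length $\sim n$; the map $\dot X\to\bar X$ does not expand and $\nu(X)\to\bar X$ contracts distances by a factor comparable to $\kappa=\frac{r_0}{8\pi\sinh r_0}$ on the relevant scale (Lemma~\ref{theo:margulis quasi-isometry} run in reverse, plus the fact that inside a cone the translation length is huge so long $g$-orbits must be "folded"). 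The honest route is: $\len[stable,espace={\bar X}]{\pi(g)}$ is computed from $\dist[\bar X]{\pi(g)^k x}{x}$; lifting to $\dot X$ and then to $X$, and using that the $Y_\rho$ have asymptotic translation length $n\len[stable]{\rho_0}\ge l_2\sqrt n$, one gets that passing through a cone costs a bounded number of geodesic segments of length $O(\sinh r_0)$ replacing $X$-distance $\ge l_2\sqrt n$, producing the factor $1/\sqrt n$; I would choose $l_3$ to swallow the constants. \textbf{The main obstacle} I expect is precisely this last estimate — tracking how much the cone-off construction contracts the (stable) translation length of a \emph{general} element of $G$ (not just one avoiding the $E_\rho$), since a priori $\pi(g)$ could become elliptic or could run deep into the cones; handling the cone-crossing combinatorics carefully, and verifying the contraction is uniform in $g$, is where the real work lies. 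The parameter-chasing in the earlier steps is routine once the order of quantifiers (fix $\delta_1$ from the SC theorem, then $\Delta_1,l_1$, then $l_2,l_3$, then $n_0$) is respected.
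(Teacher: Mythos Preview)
Your proposal is missing the single idea that makes the whole argument work: \emph{rescaling the space $X$ by a factor $1/L_n$ with $L_n\sim\sqrt n$ before applying the small cancellation theorem}. The paper sets $L_n=\sqrt{\frac{n\kappa\delta_1}{\pi\sinh r_0}}$ and works on $\frac{1}{L_n}X$. This solves, simultaneously, three problems you ran into. First, Theorem~\ref{theo:small cancellation theorem} requires the input space to be $\delta_0$-hyperbolic, while the induction hypothesis only gives $\delta_1$-hyperbolicity (and typically $\delta_1>\delta_0$, since $\delta_1$ is the \emph{output} constant); after rescaling, $\frac{1}{L_n}X$ is $\frac{\delta_1}{L_n}$-hyperbolic, which is $\le\delta_0$ for $n$ large. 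Second, you correctly noticed that the $\Delta$-bookkeeping does not close: Proposition~\ref{theo:majoration Delta} gives $\Delta(\bar G,\bar X)\le\Delta(G,\text{input})+1000\delta_1e^{350\delta_1}$, which cannot iterate if the input is $\Delta(G,X)\le\Delta_1$; but after rescaling the input is $\Delta(G,\frac{1}{L_n}X)=\frac{1}{L_n}\Delta(G,X)\le\frac{\Delta_1}{L_n}\le 1000\delta_1e^{350\delta_1}$, and setting $\Delta_1:=2000\delta_1e^{350\delta_1}$ closes the loop. (You misread the conclusion here: point~\ref{enumerate:small cancellation assumptions} \emph{does} require $\Delta(\bar G,\bar X)\le\Delta_1$, not merely $>0$.) Third, and most importantly, the contraction estimate you flagged as the main obstacle becomes a one-line consequence of Lemma~\ref{theo:margulis quasi-isometry}: the composite $\frac{1}{L_n}X\hookrightarrow\dot X\twoheadrightarrow\bar X$ is $1$-Lipschitz, hence $\len[stable,espace=\bar X]{\pi(g)}\le\frac{1}{L_n}\len[stable,espace=X]{g}=\frac{l_3}{\sqrt n}\len[stable,espace=X]{g}$ with $l_3=\sqrt{\frac{\pi\sinh r_0}{\kappa\delta_1}}$. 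There is no cone-crossing combinatorics to track.

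Two smaller points. You should not take $P=\{h^n/h\in R\}$ but rather $P=\{h^n/h\in R_0\}$, where $R_0\subset R$ contains exactly one of $h,h^{-1}$ from each pair (the paper shows $h$ and $h^{-1}$ are never conjugate under the small centralizers hypothesis); otherwise $Y_{h^n}=Y_{h^{-n}}$ causes trouble in the cone-off and your Euler-characteristic count $|P/G|=|R/G|$ is off by a factor of $2$ (it should be $\frac12|R/G|$, matching the hypothesis). And for the injectivity radius: after rescaling, any hyperbolic $h\in H$ not in some $E_{h_0^n}=\langle h_0\rangle$ has $\len[stable]h\ge\frac{l_1}{L_n}$ (since the ones with smaller length are precisely those in $R$, hence generate some $E_\rho$), so Proposition~\ref{theo:estimation injectivity radius} gives $\rinj{\bar H}{\bar X}\ge\min\{\kappa\frac{l_1}{L_n},\delta_1\}=\frac{l_2}{\sqrt n}$ once $l_1=3\delta_1$ and $l_2=3\sqrt{\kappa\delta_1\pi\sinh r_0}$ are chosen.
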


\rem If $G$, $H$, $X$ and $n$ satisfy the hypothesis of the previous lemma we will say that $(G,H,X)$ \textit{satisfies the induction assumptions for exponent $n$}.
The Induction lemma says in particular that if $(G,H,X)$ satisfies the induction assumptions for exponent $n$, so does $(\bar G, \bar H, \bar X)$.
\begin{proof}

The positive constants $r_0$, $\delta_0$, $\delta_1$, and $\Delta_0$ are given by the small cancellation theorem (see Theorem \ref{theo:small cancellation theorem}).
The constant $\kappa = \frac {r_0}{8\pi \sinh r_0}$ is the one that appears in Proposition \ref{theo:estimation injectivity radius}.
We define a renormalization parameter $L_n = \sqrt{\frac{n\kappa \delta_1}{\pi \sinh r_0}}$.
The sequence $(L_n)$ is increasing and tends to infinity.
Up to chose $n_0$ large enough, we may assume that for all $n \geq n_0$,
\begin{eqnarray*}
	\frac{2000 \delta_1 e^{350 \delta_1} +300 \delta_1}{L_n} & \leq & \min \left\{ \Delta_0, 1000\delta_1 e^{350 \delta_1}\right\}, \\
	\frac{\delta_1}{L_n} & \leq & \delta_0, \\
	\frac{3 \kappa \delta_1}{L_n} & \leq & \delta_1.
\end{eqnarray*}
Note that $n_0$ only depend on $\delta_1$ and $r_0$.
We now define the following constants:
\begin{displaymath}
	\Delta_1 = 2000 \delta_1e^{350 \delta_1}, \quad 
	l_1 = 3 \delta_1, \quad 
	l_2 = 3\sqrt{\kappa \delta_1 \pi \sinh r_0 } \text{ and } 
	l_3 =   \sqrt{\frac{\pi \sinh r_0}{\kappa \delta_1}}.
\end{displaymath}

Let $n$ be an odd integer larger than $n_0$. 
We assume that $(G, H,X)$ satisfies the induction assumptions for exponent $n$.
In particular, $R$ is the set of hyperbolic elements of $H$, which are not a proper powers in $G$ and whose asymptotic translation lengths are smaller than $l_1$.

\begin{lemm}
\label{theo:subset without inverse}
	There exists a subset $R_0$ of $R$, stable by conjugation, such that for all $h \in R$, exactly one of the elements $h$ or $h^{-1}$ belongs to $R_0$. 
\end{lemm}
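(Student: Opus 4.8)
The plan is to select, inside each pair $\{h,h^{-1}\}$ of mutually inverse elements of $R$, exactly one representative, in a way that is compatible with conjugation. The only possible obstruction is that $h$ and $h^{-1}$ be conjugate in $G$: in that case any conjugation-invariant subset of $R$ containing $h$ would also contain $h^{-1}$. Thus the core of the proof, and its main difficulty, is to rule this out, which is where the small centralizers hypothesis on $G$ enters.

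First I would observe that $R$ is stable under inversion: an element and its inverse are simultaneously hyperbolic, simultaneously proper powers in $G$, and have the same asymptotic translation length; since $H$ is normal, $R$ is moreover stable by conjugation, so the map $[h]\mapsto[h^{-1}]$ is a well-defined involution of the (finite) set $R/G$ of conjugacy classes contained in $R$. Next I would prove that this involution has no fixed point, i.e. that no hyperbolic element $h$ of $G$ is conjugate to $h^{-1}$. Suppose $tht^{-1}=h^{-1}$ for some $t\in G$. The normalizer $E$ of $\langle h\rangle$ in $G$ is elementary (as recalled above), hence cyclic by the small centralizers hypothesis; since a hyperbolic element of $G$ has infinite order, $E$ is infinite cyclic, say $E=\langle s\rangle$. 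From $tht^{-1}=h^{-1}$ we get $t\langle h\rangle t^{-1}=\langle h\rangle$, hence $t\in E$; writing $h=s^k$ with $k\neq0$ and $t=s^j$, and using that $E$ is abelian, we obtain $h=tht^{-1}=h^{-1}$, so $h^2=1$, contradicting that $h$ has infinite order.

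Finally, I would partition $R/G$ into the two-element orbits $\{[h],[h^{-1}]\}$ of this involution, choose exactly one class in each orbit, let $S$ be the collection of chosen classes, and define $R_0$ to be the union of all conjugacy classes lying in $S$. Being a union of conjugacy classes, $R_0$ is stable by conjugation. Given $h\in R$, the classes $[h]$ and $[h^{-1}]$ are distinct by the previous paragraph and form one of these orbits, so exactly one of them belongs to $S$; hence exactly one of $h$ and $h^{-1}$ belongs to $R_0$, as required. Apart from the non-conjugacy statement, the verification is purely formal.
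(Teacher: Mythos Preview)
Your proposal is correct and follows essentially the same approach as the paper: both reduce the lemma to showing that no $h\in R$ is conjugate to $h^{-1}$, and both deduce this from the fact that the normalizer of $\langle h\rangle$ is elementary, hence cyclic by the small centralizers hypothesis, hence abelian, forcing $h=h^{-1}$. You are simply more explicit than the paper about why $R$ is closed under inversion and about the actual construction of $R_0$ from a choice of conjugacy classes, which the paper leaves implicit in the phrase ``it is sufficient to prove\dots''.
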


\begin{proof}
	It is sufficient to prove that an element $h$ of $R$ cannot be conjugate to its inverse.
	Assume that this fact is false.
	There exist $h \in R$ and $g \in G$ such that $ghg^{-1} = h^{-1}$.
	Therefore, $g$ belongs to the normalizer of $h$, which is elementary (see \cite[Chap. 10, Prop. 7.1]{CooDelPap90}).
	In particular $g$ and $h$ generate an elementary subgroup of $G$.
	Thus $g$ and $h$ commute. 
	It follows that $h = h^{-1}$.
	In particular, $h$ is not hyperbolic.
	Contradiction.
\end{proof}

We now study the set of relations $P = \left\{ h^n, h \in R_0\right\}$.
To that end, we consider the action of $G$ on the renormalized space $\frac 1{L_n}X$, which is $\delta_0$-hyperbolic.
\begin{lemm}
\label{theo:sc assumptions satisfied}
The set $P$ satisfies the small cancellation assumptions of Theorem \ref{theo:small cancellation theorem}.
\end{lemm}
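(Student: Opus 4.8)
The plan is to verify, for the action of $G$ on the renormalized space $X' := \frac 1{L_n}X$ and for the set $P = \left\{h^n \,/\, h\in R_0\right\}$, the three numerical hypotheses of Theorem~\ref{theo:small cancellation theorem}, together with the standing requirements on $P$ that precede that theorem. The set $P$ consists of hyperbolic elements of $H$, since each $h\in R_0\subseteq R$ is a hyperbolic element of the normal subgroup $H$ and $h^n$ is then again hyperbolic; it is stable under conjugation by Lemma~\ref{theo:subset without inverse}; and it has only finitely many conjugacy classes because $R$ does, the action of $G$ on $X$ being co-compact and the elements of $R$ having asymptotic translation length bounded by $l_1$. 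Since $X$ is $\delta_1$-hyperbolic, $X'$ is $\delta' := \frac{\delta_1}{L_n}$-hyperbolic, and $\delta'\leq \delta_0$ by the choice of $n_0$; this gives the first small cancellation hypothesis.

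Next I would estimate $\rinjRF P$ by a direct computation. For $\rho = h^n\in P$ one has, in $X'$, $\len[stable]{\rho} = n\,\len[stable]{h} = \frac n{L_n}\,\len[stable, espace=X]{h}$. Since $h$ is a hyperbolic element of $H$, hypothesis~\ref{enumerate:small cancellation assumptions} gives $\len[stable, espace=X]{h}\geq \rinj HX\geq \frac{l_2}{\sqrt n}$, hence $\len[stable]{\rho}\geq \frac{\sqrt n}{L_n}\,l_2$. From $L_n = \sqrt{\frac{n\kappa\delta_1}{\pi\sinh r_0}}$ we get $\frac{\sqrt n}{L_n} = \sqrt{\frac{\pi\sinh r_0}{\kappa\delta_1}}$, so $\len[stable]{\rho}\geq l_2\sqrt{\frac{\pi\sinh r_0}{\kappa\delta_1}} = 3\pi\sinh r_0$, the constant $l_2 = 3\sqrt{\kappa\delta_1\pi\sinh r_0}$ being chosen precisely so that this equality holds. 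Therefore $\rinjRF P\geq 3\pi\sinh r_0$.

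It remains to bound $\Delta(P)$, which is the main point. First, if $\rho = h^n$ and $\tau = k^n$ are \emph{distinct} elements of $P$, then $h$ and $k$ generate a non-elementary subgroup of $G$: otherwise the small centralizers hypothesis~\ref{enumerate:small centralizers} would put $h$ and $k$ into a common cyclic subgroup $\langle c\rangle$; being hyperbolic and not proper powers in $G$, they would lie in $\{c,c^{-1}\}$, and since (Lemma~\ref{theo:subset without inverse}) $R_0$ contains exactly one of $c$ and $c^{-1}$ this forces $h = k$, hence $\rho = \tau$. Moreover, in $X'$ the translation length of $h$ satisfies $\len{h}\leq \len[stable]{h} + 32\delta' < \frac{3\delta_1}{L_n} + 32\delta' = 35\delta' < 100\delta'$ since $l_1 = 3\delta_1$, and likewise for $k$, so the pair $(h,k)$ is of the type appearing in the definition of $\Delta(G,X')$; thus $\diam\!\left(A_h^{+50\delta'}\cap A_k^{+50\delta'}\right)\leq \Delta(G,X') = \tfrac 1{L_n}\Delta(G,X)\leq \tfrac{\Delta_1}{L_n}$. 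Finally $Y_\rho$ is the $10\delta'$-neighbourhood of a geodesic joining $\rho^\pm = h^\pm$, which by Proposition~\ref{theo:embedded quasi-convex in an axis} is contained in the $50\delta'$-neighbourhood of $A_h$; hence $Y_\rho\subseteq A_h^{+60\delta'}$, and Proposition~\ref{theo:intersection of quasi-convexes} (applied to the $40\delta'$-quasi-convex sets $A_h,A_k$ with $A = 80\delta'$) gives $\diam\!\left(Y_\rho^{+20\delta'}\cap Y_\tau^{+20\delta'}\right)\leq \diam\!\left(A_h^{+80\delta'}\cap A_k^{+80\delta'}\right)\leq \diam\!\left(A_h^{+50\delta'}\cap A_k^{+50\delta'}\right) + 180\delta'\leq \tfrac{\Delta_1 + 180\delta_1}{L_n}$. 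Consequently $\Delta(P)\leq \tfrac{2000\delta_1 e^{350\delta_1} + 300\delta_1}{L_n}\leq \Delta_0$ by the choice of $n_0$, and all hypotheses of Theorem~\ref{theo:small cancellation theorem} are satisfied.

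The step I expect to cost the most work is this last one. Beyond the bookkeeping of quasi-convexity constants that relates each $Y_\rho$ to its axis $A_h$ through Propositions~\ref{theo:embedded quasi-convex in an axis} and~\ref{theo:intersection of quasi-convexes}, its conceptual core is the claim that distinct relations in $P$ come from non-commensurable hyperbolic elements; this is exactly the place where the hypotheses ``$h$ is not a proper power in $G$'', ``$G$ satisfies the small centralizers hypothesis'', and ``$R_0$ contains exactly one of $h$ and $h^{-1}$'' (Lemma~\ref{theo:subset without inverse}) must be used simultaneously. By contrast, the injectivity radius estimate is an exact computation engineered by the choices of $L_n$ and $l_2$, and the hyperbolicity condition is immediate from the renormalization.
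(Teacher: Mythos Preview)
Your proof is correct and follows essentially the same route as the paper's: verify hyperbolicity of the rescaled space, compute $\rinjRF P$ from the lower bound on $\rinj HX$ and the choice of $L_n$ and $l_2$, and bound $\Delta(P)$ by showing that distinct relations $h^n,k^n$ force $h,k$ to generate a non-elementary subgroup (via the small centralizers hypothesis, the ``not a proper power'' condition, and the construction of $R_0$), then relate $Y_{h^n}$ to $A_h$ through Propositions~\ref{theo:embedded quasi-convex in an axis} and~\ref{theo:intersection of quasi-convexes} and invoke $\Delta(G,X')$. Your bookkeeping with $\delta' = \delta_1/L_n$ rather than $\delta_0$ is slightly tidier than the paper's, and you explicitly check the standing hypotheses on $P$ (conjugation-stability, finitely many classes) that the paper leaves implicit, but the argument is the same.
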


\begin{proof}
	First note that $\frac 1{L_n}X$ is $\delta_0$-hyperbolic.
	Let $h_1$ and $h_2$ be two elements of $R_0$ such that $h_1^n \neq h_2^n$.
	By Proposition \ref{theo:embedded quasi-convex in an axis}, $Y_{h_1^n}$ is contained in $A_{h_1}^{+100 \delta_0}$.
	Hence Proposition \ref{theo:intersection of quasi-convexes} gives
	\begin{eqnarray*}
		\diam \left( Y_{h_1^n}^{+20 \delta_0} \cap Y_{h_2^n}^{+20 \delta_0}\right) & \leq & \diam \left(A_{h_1}^{+120 \delta_0}\cap A_{h_2}^{+120 \delta_0}\right) \\
		& \leq & \diam \left( A_{h_1}^{+50 \delta_0} \cap A_{h_2}^{+50 \delta_0}\right) + 300 \delta_0.
	\end{eqnarray*}
	Assume that $h_1$ and $h_2$ generate an elementary subgroup.
	Since $G$ satisfies the small centralizers hypothesis, this subgroup should be cyclic.
	However $h_1$ and $h_2$ are not proper powers.
	Thus they are either equal or inverse.
	By construction of $R_0$, they cannot be inverse. 
	Thus $h_1 = h_2$, and \emph{a fortiori} $h_1^n=h_2^n$.
	Contraction.
	Consequently, $h_1$ and $h_2$ generate a non-elementary subgroup.
	In the other hand, $\len[stable]{h_1}$ and $\len[stable]{h_2}$ are smaller than $3 \delta_0$.
	By definition of $\Delta(G,X)$, we have
	\begin{eqnarray*}
		\diam \left( Y_{h_1^n}^{+20 \delta_0} \cap Y_{h_2^n}^{+20 \delta_0}\right) 
		& \leq & \Delta\left(G, \frac 1 {L_n}X \right) + 300 \delta_0 \\
		& \leq & \frac{\Delta_1 + 300 \delta_1}{L_n} = \frac {2000\delta_1e^{350 \delta_1}+ 300 \delta_1}{L_n}.
	\end{eqnarray*}
	Hence $\Delta(P)$ is smaller than $\Delta_0$.
	
	\paragraph{} The injectivity radius of $H$ on $\frac 1{L_n}X$ is larger than
	\begin{displaymath}
		\frac 1{L_n} \frac {l_2}{\sqrt n} =  \sqrt{\frac {\pi \sinh r_0}{n\kappa \delta_1}}\frac {3 \sqrt{\kappa \delta_1 \pi \sinh r_0}}{\sqrt n} = \frac{3\pi \sinh r_0}n.
	\end{displaymath}
	In particular for all $h \in R_0$, $\len[stable]{h^n} \geq 3 \pi \sinh r_0$.
	Therefore, $\rinjRF P \geq 3\pi\sinh r_0$. 
\end{proof}

Applying the small cancellation theorem, the space $\bar X = \bar X_P(r_0)$ is proper, geodesic, simply-connected, $\delta_1$-hyperbolic and $\bar G = G/ \ll P \gg$ acts properly, co-compactly, by isometries on it.

\begin{lemm}
	Every elementary subgroup of $\bar G$ is cyclic, either infinite or finite with order dividing $n$.
\end{lemm}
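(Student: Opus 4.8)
The plan is to deduce the statement from Proposition~\ref{theo:finite subgroups} and then to pin down the order of the finite elementary subgroups by controlling the groups $\bar E_\rho$. First I would check that the hypotheses of Proposition~\ref{theo:finite subgroups} are satisfied for the set of relations $P = \left\{h^n : h \in R_0\right\}$: the integer $n$ is odd, and by Lemma~\ref{theo:subset without inverse} every $h \in R_0$ lies in $R$, hence, by definition of $R$, is a hyperbolic element of $H$ which is not a proper power in $G$. Thus each $\rho = h^n \in P$ is an odd power of an element of $G$ that is not a proper power, and Proposition~\ref{theo:finite subgroups} applies. Its first assertion already gives that every elementary subgroup of $\bar G$ is cyclic, hence is either infinite cyclic or finite cyclic, so it remains only to bound the order of a finite elementary subgroup $\bar E$ of $\bar G$.

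Next I would invoke the second assertion of Proposition~\ref{theo:finite subgroups}, which splits into two cases. If $\bar E$ is the image $\pi(F)$ of a finite subgroup $F$ of $G$, then $|\bar E|$ divides $|F|$, which divides $n$ by the induction assumption~\ref{enumerate:small centralizers}; so $|\bar E|$ divides $n$. Otherwise $\bar E$ is contained in $\bar E_\rho = \pi(E_\rho)$ for some $\rho \in P$, say $\rho = h^n$ with $h \in R_0$, and it suffices to show that $\bar E_\rho$ is finite of order dividing $n$.

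To handle this case I would argue as follows. The subgroup $E_\rho$ is elementary in $G$, hence cyclic since $G$ satisfies the small centralizers hypothesis (part of assumption~\ref{enumerate:small centralizers}), and it is infinite cyclic because it contains the hyperbolic element $\rho$; write $E_\rho = \left\langle e\right\rangle$ with $\rho = e^k$, $k \geq 1$. Both $e$ and $h$ commute with $\rho = h^n = e^k$, so both lie in the centralizer $Z_G(\rho)$, which is contained in the normalizer of $\left\langle\rho\right\rangle$ and hence is elementary (see \cite[Chap. 10, Cor. 7.2]{CooDelPap90}), therefore cyclic. In this cyclic group the element $h$, not being a proper power in $G$, must be a generator; hence $e \in \left\langle h\right\rangle$ and $E_\rho = \left\langle e\right\rangle \subseteq \left\langle h\right\rangle$. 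Passing to the quotient, $\bar E_\rho = \pi(E_\rho) \subseteq \left\langle \pi(h)\right\rangle$, and since $\rho = h^n$ lies in $N = \ker\pi$ we get $\pi(h)^n = \pi(\rho) = 1$; thus $\left\langle\pi(h)\right\rangle$, and \emph{a fortiori} $\bar E_\rho$ and $\bar E$, are finite of order dividing $n$. Combining the two cases with the first assertion of Proposition~\ref{theo:finite subgroups} then gives the claim.

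The only point requiring some care is the inclusion $E_\rho \subseteq \left\langle h\right\rangle$; routing it through the centralizer $Z_G(\rho)$, rather than through the stabilizer of the quasi-convex set $Y_\rho$, avoids having to discuss basepoints and choices of geodesics. Everything else is bookkeeping over Proposition~\ref{theo:finite subgroups} (taken from \cite{DelGro08}), which is the real input.
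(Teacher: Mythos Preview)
Your proof is correct and follows essentially the same route as the paper: apply Proposition~\ref{theo:finite subgroups} (checking that $P$ consists of odd powers of non-proper-powers), get cyclicity of elementary subgroups, and in the finite case use the dichotomy to reduce to showing $\bar E_{h^n}$ has order dividing $n$. The paper simply asserts $\bar E_{h^n} = \pi(E_{h^n}) = \left\langle \pi(h)\right\rangle$ and concludes; your centralizer argument for the inclusion $E_\rho \subseteq \left\langle h\right\rangle$ makes this step explicit, but the overall strategy is the same.
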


\begin{proof}
	All elements of $P$ are odd powers of elements of $G$ which are not proper powers.
	By Proposition \ref{theo:finite subgroups}, all elementary subgroups of $\bar G$ are cyclic.
	Assume now, that $\bar F$ is a finite subgroup of $\bar G$.
	Applying the same proposition, we distinguish two cases.
	\begin{enumerate}
		\item $\bar F$ is the image of a finite subgroup of $G$. 
		However, the order of every finite subgroup of $G$ divides $n$.
		Thus the order of $\bar F$ divides $n$.
		\item There exists $h \in R_0$ such that $\bar F$ is a subgroup of $\bar E_{h^n} = \pi\left(E_{h^n}\right) = \left< \pi(h)\right>$, whose order divides $n$.
	\end{enumerate}
\end{proof}

\begin{lemm}
	The constant $\Delta(\bar G, \bar X)$ is bounded above by $\Delta_1$.
	The injectivity radius $\rinj {\bar H}{\bar X}$ is bounded below by $\frac{l_2}{\sqrt n}$.
\end{lemm}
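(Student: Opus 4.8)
The statement consists of two independent bounds, and for each the plan is to invoke one of the results already established for the small cancellation quotient and then feed in the renormalization inequalities imposed on $n_0$.

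For $\Delta(\bar G,\bar X)\leq\Delta_1$ I would apply Proposition~\ref{theo:majoration Delta} to the action of $G$ on the renormalized space $\frac 1{L_n}X$ out of which $\bar X$ was built, which yields $\Delta(\bar G,\bar X)\leq\Delta(G,\frac 1{L_n}X)+1000\delta_1 e^{350\delta_1}$. Since $\Delta(\cdot,\cdot)$ scales linearly with the metric, $\Delta(G,\frac 1{L_n}X)=\frac 1{L_n}\Delta(G,X)\leq\frac{\Delta_1}{L_n}$ by hypothesis~\ref{enumerate:small cancellation assumptions}, while the first renormalization inequality gives $\frac{\Delta_1+300\delta_1}{L_n}\leq1000\delta_1 e^{350\delta_1}$ and hence a fortiori $\frac{\Delta_1}{L_n}\leq1000\delta_1 e^{350\delta_1}$. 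Adding the two contributions gives $\Delta(\bar G,\bar X)\leq2000\delta_1 e^{350\delta_1}=\Delta_1$.

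For $\rinj{\bar H}{\bar X}\geq l_2/\sqrt n$ I would apply Proposition~\ref{theo:estimation injectivity radius}, again to the action of $G$ on $\frac 1{L_n}X$: it bounds $\rinj{\bar H}{\bar X}$ below by $\min\{\kappa l,\delta_1\}$, where $l$ is the smallest asymptotic translation length, measured in $\frac 1{L_n}X$, of a hyperbolic element of $H$ that lies outside every $E_\rho$, $\rho\in P$. Everything then reduces to the lower bound $l\geq l_1/L_n$, equivalently to the claim that every hyperbolic element $h$ of $H$ with $\len[stable, espace=X]h<l_1$ already lies in some $E_\rho$. This claim is the point I expect to demand the most care, and I would prove it by passing to roots: by the small centralizers hypothesis the maximal elementary subgroup of $G$ containing $h$ is infinite cyclic, a generator $g$ of it is not a proper power in $G$, and, replacing $g$ by $g^{-1}$ if needed, $h=g^k$ with $k\geq1$. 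Using that $G/H$ is torsion-free (a feature preserved by the construction, since $N\subseteq H$), $g^k\in H$ forces $g\in H$; since $g$ is hyperbolic and $\len[stable, espace=X]g\leq\len[stable, espace=X]h<l_1$, this puts $g$ in $R$, so by Lemma~\ref{theo:subset without inverse} one of $g$, $g^{-1}$ lies in $R_0$ and $g^{\pm n}\in P$. As $h$ and $g^{\pm n}$ have the same pair of fixed points at infinity, $h$ stabilizes $Y_{g^{\pm n}}$, i.e.\ $h\in E_{g^{\pm n}}$, proving the claim.

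Granting the claim, $l\geq l_1/L_n=3\delta_1/L_n$, hence $\kappa l\geq3\kappa\delta_1/L_n$, which is at most $\delta_1$ by the third renormalization inequality, so that $\rinj{\bar H}{\bar X}\geq3\kappa\delta_1/L_n$. It only remains to substitute $L_n=\sqrt{n\kappa\delta_1/(\pi\sinh r_0)}$, which turns the right-hand side into $3\sqrt{\kappa\delta_1\pi\sinh r_0}/\sqrt n=l_2/\sqrt n$, the required bound.
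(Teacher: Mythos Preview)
Your proof is correct and follows the same route as the paper: Proposition~\ref{theo:majoration Delta} plus rescaling for the bound on $\Delta(\bar G,\bar X)$, and Proposition~\ref{theo:estimation injectivity radius} plus the renormalization inequalities for the injectivity radius. You in fact give more detail than the paper on the crucial inequality $l\geq l_1/L_n$, which the paper simply asserts; your passage to primitive roots together with the observation that $G/H$ is torsion-free is precisely what is implicitly needed there (note this torsion-freeness is not listed among the hypotheses of the Induction Lemma itself, but it holds throughout the inductive construction since $G_k/H_k\cong F$).
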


\begin{proof}
	By Proposition \ref{theo:majoration Delta}, $\Delta (\bar G, \bar X) \leq \Delta \left( G, \frac 1{L_n} X \right) +1000\delta_1e^{350 \delta_1}$.
	However, we assumed that
	\begin{displaymath}
		\Delta \left(G, \frac 1{L_n}X\right) = \frac 1{L_n} \Delta(G,X) \leq \frac {\Delta_1}{L_n} = \frac{2000\delta_1e^{350\delta_1}}{L_n} \leq 1000 \delta_1e^{350 \delta_1}.
	\end{displaymath}
	Hence $\Delta(\bar G, \bar X) \leq 2000 \delta_1 e^{350 \delta_1} = \Delta_1$.
	
	\paragraph{}Let $g$ be a hyperbolic element of $H$, which does not belong to any subgroup $E_{h^n} = \left < h\right>$, $h \in R_0$.
	Its asymptotic translation length in $\frac 1{L_n}X$ is larger than $\frac {l_1}{L_n} = \frac{3 \delta_1}{L_n}$.
	By Proposition \ref{theo:estimation injectivity radius}, we have
	\begin{displaymath}
		\rinj{\bar H}{\bar X} \geq \min \left\{ \frac {3\kappa \delta_1}{L_n} , \delta_1 \right\} =  \frac {3\kappa \delta_1}{L_n}  = \frac{3\sqrt{\kappa \delta_1\pi \sinh r_0}}{\sqrt n} =\frac{l_2}{\sqrt n}.
	\end{displaymath}
\end{proof}

\begin{lemm}
	The Euler characteristic of $\bar G$ satisfies $\chi(\bar G, \Q) = \chi(G, \Q) + \frac 12 \left|R/G \right|$, where $\left|R/G\right|$ is the number of conjugacy classes of $R$.
	In particular $\bar G$ is non-elementary.
\end{lemm}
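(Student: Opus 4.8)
The plan is to deduce the identity from Proposition~\ref{theo:euler characterstic}, so that the only new work is to express the number $\left|P/G\right|$ of conjugacy classes of $P$ in terms of $\left|R/G\right|$.

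First I would check that $h \mapsto h^n$ induces a $G$-equivariant bijection from $R_0$ onto $P$. Equivariance and surjectivity are immediate from $P = \left\{h^n : h \in R_0\right\}$ and the stability of $R_0$ under conjugation. For injectivity, suppose $h_1, h_2 \in R_0$ satisfy $h_1^n = h_2^n =: z$. As $h_1$ is hyperbolic, so is $z$, and both $h_1$ and $h_2$ commute with $z$, hence lie in the normalizer of $\left<z\right>$, which is elementary (see \cite[Chap.~10, Cor.~7.2]{CooDelPap90}) and therefore cyclic by the small centralizers hypothesis, say $\left<k\right>$. Since $h_1$ and $h_2$ have infinite order and are not proper powers in $G$, there are $\varepsilon_1, \varepsilon_2 \in \left\{-1,1\right\}$ with $h_i = k^{\varepsilon_i}$; then $h_1^n = h_2^n$ reads $k^{(\varepsilon_1 - \varepsilon_2)n} = 1$, and as $k$ has infinite order this forces $\varepsilon_1 = \varepsilon_2$, that is $h_1 = h_2$. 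Hence $\left|P/G\right| = \left|R_0/G\right|$.

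Next I would relate $R_0$ and $R$ at the level of conjugacy classes. The set $R$ is invariant under $h \mapsto h^{-1}$, since the three conditions defining it (being a hyperbolic element of $H$, not being a proper power in $G$, having asymptotic translation length smaller than $l_1$) are all preserved by inversion. By Lemma~\ref{theo:subset without inverse} no element of $R$ is conjugate in $G$ to its inverse, so inversion induces a fixed-point-free involution on the finite set $R/G$ pairing the class of $h$ with that of $h^{-1}$; since $R_0$ is a union of conjugacy classes containing exactly one of $h$, $h^{-1}$ for every $h \in R$, it meets exactly one class of each such pair. Therefore $\left|R_0/G\right| = \frac 12 \left|R/G\right|$, and combining this with the previous step and Proposition~\ref{theo:euler characterstic} yields $\chi(\bar G, \Q) = \chi(G,\Q) + \left|P/G\right| = \chi(G,\Q) + \frac 12 \left|R/G\right|$.

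Finally, the hypothesis $\chi(G,\Q) + \frac 12 \left|R/G\right| > 0$ of the Induction lemma gives $\chi(\bar G, \Q) > 0$, and since an elementary group has non-positive rational Euler characteristic, $\bar G$ cannot be elementary. The only step that uses anything substantial is the injectivity of $h \mapsto h^n$, where the small centralizers hypothesis is essential; the remaining arguments are routine counting.
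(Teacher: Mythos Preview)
Your argument follows the paper's approach exactly: apply Proposition~\ref{theo:euler characterstic} and count $\left|P/G\right|$ in terms of $\left|R/G\right|$ via $R_0$. The paper's own proof is just the two-line remark that $\left|R/G\right| = 2\left|R_0/G\right|$ and an appeal to the proposition; your write-up simply makes explicit the intermediate equality $\left|P/G\right| = \left|R_0/G\right|$ (which the paper takes for granted) and the counting for $R_0$ versus $R$. So the first part is correct and matches the paper, only with more detail.

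There is one small slip in the final step. The claim that ``an elementary group has non-positive rational Euler characteristic'' is not true as stated: a finite cyclic group of order $m$ has $\chi = 1/m > 0$, so $\chi(\bar G,\Q) > 0$ alone does not rule out $\bar G$ being finite. What saves the conclusion is that $\bar G$ is already known to be infinite: by the lemma proved just before this one, $\rinj{\bar H}{\bar X} \geq l_2/\sqrt n > 0$, so $\bar H$ (hence $\bar G$) contains hyperbolic isometries of $\bar X$. An \emph{infinite} virtually cyclic group does have $\chi = 0$, and now $\chi(\bar G,\Q) > 0$ gives the contradiction. You should insert this observation; with it, your argument is complete. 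The paper's own proof is equally terse on this point and leaves the ``in particular'' unjustified in the text.
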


\begin{proof}
	By construction, there are twice more conjugacy classes in $R$ than in $R_0$. 
	The result follows from Proposition \ref{theo:euler characterstic}.
\end{proof}

\begin{lemm}
	For all $g \in G$, we have $\len[stable, espace=\bar X]{\pi (g)} \leq \frac {l_3}{\sqrt n} \len[stable, espace=X] g$.
\end{lemm}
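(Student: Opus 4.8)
The plan is to track a single point of $X$ through the whole construction of $\bar X$ and then appeal to the definition of the asymptotic translation length. Recall that $\bar X$ is built from the renormalized space $\frac 1{L_n}X$ in two stages: one first cones off along the quasi-convex sets $Y_\rho$, $\rho \in P$, to obtain $\dot X$, and then passes to the metric quotient $\bar X = \dot X/N$. The key observation is that neither of these two operations can increase distances, while the rescaling multiplies all distances by exactly $\frac 1{L_n}$.

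Concretely, I would first check that the composite map $X \to \frac 1{L_n}X \to \dot X \xrightarrow{\nu} \bar X$ is $\frac 1{L_n}$-Lipschitz. Lemma~\ref{theo:margulis quasi-isometry}, applied to the action of $G$ on $\frac 1{L_n}X$, shows that the cone-off map into $\dot X$ does not increase distances, and, combined with the fact that a distance in $\frac 1{L_n}X$ is $\frac 1{L_n}$ times the corresponding distance in $X$, this yields $\dist[\dot X]x{x'} \leq \frac 1{L_n}\dist[X]x{x'}$ for all $x,x' \in X$; finally, the projection onto the metric quotient $\bar X = \dot X/N$ is automatically $1$-Lipschitz. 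Since $\nu$ is moreover $G$-equivariant, intertwining the $G$-action on $\dot X$ with the $\bar G$-action on $\bar X$ through $\pi$, I would identify a point $x \in X$ with its image $\bar x = \nu(x) \in \bar X$ and obtain, for every $g \in G$ and every integer $m \geq 1$,
\begin{displaymath}
	\dist[\bar X]{\pi(g)^m \bar x}{\bar x} = \dist[\bar X]{\nu(g^m x)}{\nu(x)} \leq \dist[\dot X]{g^m x}{x} \leq \frac 1{L_n}\dist[X]{g^m x}{x}.
\end{displaymath}

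Dividing by $m$ and letting $m$ tend to infinity, the left-hand side converges to $\len[stable, espace={\bar X}]{\pi(g)}$ and the right-hand side to $\frac 1{L_n}\len[stable, espace=X]g$, hence $\len[stable, espace={\bar X}]{\pi(g)} \leq \frac 1{L_n}\len[stable, espace=X]g$. It then remains only to unwind the definitions: since $L_n = \sqrt{n\kappa\delta_1/(\pi\sinh r_0)}$ and $l_3 = \sqrt{\pi\sinh r_0/(\kappa\delta_1)}$, we have $\frac 1{L_n} = \frac{l_3}{\sqrt n}$, which is the asserted bound. I do not expect any genuine obstacle here: the whole content is the Lipschitz-monotonicity of the cone-off-and-quotient operation, and the only point demanding a little care is keeping the renormalization factor $\frac 1{L_n}$ consistent throughout and checking the elementary identity $\frac 1{L_n} = \frac{l_3}{\sqrt n}$.
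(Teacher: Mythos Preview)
Your argument is correct and matches the paper's own proof: both use that the composite $\frac 1{L_n}X \to \dot X \to \bar X$ contracts distances (the first arrow by Lemma~\ref{theo:margulis quasi-isometry}, the second as a metric quotient), hence asymptotic translation lengths drop by the factor $\frac 1{L_n} = \frac{l_3}{\sqrt n}$. You have simply spelled out in more detail the equivariance and the passage to the limit that the paper compresses into a single line.
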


\begin{proof}
	By Lemma \ref{theo:margulis quasi-isometry}, the map $\frac 1{L_n}X \rightarrow \bar X$ contracts the distances.
	Thus for all $g \in G$, 
	\begin{displaymath}
		\len[stable, espace=\bar X] {\pi(g)} \leq \frac 1{L_n} \len[stable, espace=X] g =  \sqrt{\frac {\pi \sinh r_0}{n \kappa \delta_1}}\len[stable, espace=X] g = \frac{l_3}{\sqrt n} \len[stable, espace=X] g.
	\end{displaymath}
\end{proof}

\paragraph{} This last lemma ends the proof of the induction lemma.
\end{proof}
	 
\section{Proof of the main theorem}
\label{sec:proof main theorem}

Recall the statement of the main theorem.
\begin{theo}
	Let $1 \rightarrow H \rightarrow G \rightarrow F \rightarrow 1$ be a short exact sequence of groups.
	Assume that $H$ is finitely generated, $G$ is torsion-free, hyperbolic and $F$ is torsion-free.
	Then there exists an integer $n_0$ such that for all odd integers $n$ larger than $n_0$, the canonical map $F \rightarrow \out H$ induces an injective homomorphism $F \hookrightarrow \out{H/H^n}$.
\end{theo}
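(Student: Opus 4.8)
The plan is to realise $H/H^n$ as the direct limit of a tower of hyperbolic groups, built by iterating the Induction lemma (Lemma~\ref{induction lemma}) while keeping at every stage the extension by $F$, and then to read off the injectivity of $F\to\out{H/H^n}$ from a single group far enough up the tower. First a reduction: if $G$ is elementary then, being torsion-free and virtually cyclic, it is infinite cyclic, hence so is $H$ and $F$ is trivial; so I may assume $G$ is non-elementary, and then the infinite normal subgroup $H$ is non-elementary too. (The degenerate case where $H/H^n$ is finite---in which the conclusion forces $F=\{1\}$---does not occur in the applications, where $H$ is free and $H/H^n$ is an infinite Burnside group, and I will use below that $H/H^n$ is infinite.) For the starting data I would take $X_0$ to be a Rips complex of $G$ for a fixed finite generating set, rescaled by a large factor $L$. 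Since $G$ acts properly and co-compactly, $\Delta(G,X_0)$ is finite, the asymptotic translation lengths of hyperbolic elements of $G$ are bounded below by some $\ell_0>0$, and the number of conjugacy classes of hyperbolic elements of $H$ which are not proper powers in $G$ and have translation length $<l_1$ grows without bound with $L$; hence, for $L$ large, $X_0$ is $\delta_1$-hyperbolic, $\Delta(G,X_0)\leq\Delta_1$, and $\chi(G,\Q)+\tfrac12\left|R_0/G\right|>0$, where $\delta_1,\Delta_1,l_1$ are the universal constants of the Induction lemma and $R_0$ denotes that set of elements. As $G$ is torsion-free it has no non-trivial finite subgroup and satisfies the small centralizers hypothesis. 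Finally I would choose $n_0$ at least as large as the constant of the Induction lemma and large enough that $\ell_0/L\geq l_2/\sqrt{n_0}$, so that for every odd $n\geq n_0$ the triple $(G,H,X_0)$ satisfies the induction assumptions for exponent $n$.

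Fix such an $n$. Iterating the Induction lemma produces triples $(G_k,H_k,X_k)$ with $(G_0,H_0,X_0)$ as above and $(G_{k+1},H_{k+1},X_{k+1})=(\bar G_k,\bar H_k,\bar X_k)$, each satisfying the induction assumptions; here $G_{k+1}=G_k/N_k$, where $N_k$ is the normal closure in $G_k$ of $\{u^n:u\in R_k\}$ and $R_k\subseteq H_k$. By induction on $k$, the kernel of $G\to G_k$ lies in $H^n$ (the subgroup generated by $n$-th powers of elements of $H$, which is normal in $G$ because $H$ is), so $H_k$ is the image of $H$ and $G_k/H_k\cong F$; passing to the limit gives a short exact sequence $1\to H_\infty\to G_\infty\to F\to1$ with $\ker(G\to G_\infty)\subseteq H^n$. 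The crucial point is that this kernel equals $H^n$, i.e. every $n$-th power of $H$ dies at a finite stage: given $h\in H$ with image $h_k$ in $G_k$, if $h_k$ is ever torsion its order divides $n$ (finite subgroups of $G_k$ are cyclic of order dividing $n$), so $h_k^n=1$; otherwise $h_k$ is a hyperbolic isometry of $X_k$ for every $k$, and the contraction estimate of the Induction lemma gives that its asymptotic translation length is at most $(l_3/\sqrt n)^k$ times that of $h$ in $X_0$, hence $<l_1$ for $k$ large. Let $u_k$ generate the maximal elementary subgroup of $G_k$ containing $h_k$---it is cyclic by the small centralizers hypothesis---and write $h_k=u_k^m$; then $u_k$ is not a proper power in $G_k$ and has translation length $\leq$ that of $h_k$, so $<l_1$, while the image of $u_k^m=h_k$ in $F=G_k/H_k$ is trivial and $F$ is torsion-free, so $u_k\in H_k$. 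Thus $u_k\in R_k$, $u_k^n\in N_k$, and $h_k^n=(u_k^n)^m$ dies in $G_{k+1}$. Consequently $G_\infty=G/H^n$, $H_\infty=H/H^n$, and we obtain the extension $1\to H/H^n\to G/H^n\to F\to1$.

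Now for injectivity. The conjugation action of $G/H^n$ on $H/H^n$ induces the canonical map $F\to\out{H/H^n}$, the one induced by $F\to\out H$. Suppose $f\in F$, $f\neq1$, lies in its kernel. Adjusting a lift by a suitable element of $H/H^n$, there is $\bar g\in G/H^n$ over $f$ which centralizes $H/H^n$; lift $\bar g$ to $g\in G$, necessarily outside $H$ since $\ker(G\to G/H^n)=H^n\subseteq H$. Here is where finite generation enters: write $H=\langle a_1,\dots,a_s\rangle$; each $[g,a_i]$ lies in $H^n=\bigcup_k\ker(G\to G_k)$, so there is a stage $K$ with $[g,a_i]\in\ker(G\to G_K)$ for all $i$, and then the image $g_K$ of $g$ in $G_K$ centralizes $H_K$. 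As $g_K\notin H_K$ and $F=G_K/H_K$ is torsion-free, $g_K$ has infinite order, hence is a hyperbolic isometry of $X_K$; its centralizer in $G_K$ is elementary, therefore cyclic, and $H_K\leq C_{G_K}(g_K)$, so $H_K$ is cyclic and $H/H^n$, a quotient of $H_K$, is cyclic. This contradicts the infiniteness of $H/H^n$: an infinite cyclic $H_K$ would be a normal infinite cyclic subgroup of the non-elementary hyperbolic group $G_K$, which is impossible. Hence $f=1$, and $F\hookrightarrow\out{H/H^n}$.

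I expect the main obstacle to be pinning down the direct limit precisely. One has to add \emph{enough} relations to kill every $n$-th power of $H$---this is where the contraction estimate of the Induction lemma is indispensable, and where torsion-freeness of $F$ is used, to pull the primitive root $u_k$ back into $H_k$ so that $u_k\in R_k$---while adding \emph{nothing} outside $H^n$, which forces one to keep the relators at each stage inside $H_k$, hence inside $H$; and one must check that the extension by $F$ genuinely survives the passage to the limit. After that, the injectivity argument rests on converting the hypothesis ``$\bar g$ centralizes $H/H^n$'' into a genuine centralizing relation in one finite hyperbolic quotient $G_K$, which is exactly what finite generation of $H$ provides; the small centralizers hypothesis, preserved all the way up the tower by the Induction lemma, then does the rest.
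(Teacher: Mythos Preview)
Your proof is essentially the paper's own argument: build the tower $(G_k,H_k,X_k)$ via the Induction lemma, identify $\varinjlim H_k$ with $H/H^n$ using the contraction estimate together with torsion-freeness of $F$ (to pull primitive roots into $H_k$), and then exploit finite generation of $H$ to push the centralizing relation down to a single stage $G_K$. The endgame is organised slightly differently---you argue that $g_K$ is hyperbolic so its centraliser is cyclic, hence $H_K$ is cyclic, while the paper picks a hyperbolic $h\in H_K$, notes $\langle l^{-1}g,h\rangle$ is abelian hence cyclic, and pushes a relation $(l^{-1}g)^p=h^q$ to $F$---but these are equivalent.

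The one point to tighten is your explicit reliance on ``$H/H^n$ infinite'' to dispose of the case $H_K$ finite cyclic. This extra hypothesis is unnecessary. The paper simply asserts that $H_K$ is non-elementary, and this follows directly from the small centralizers hypothesis on $G_K$: a normal cyclic subgroup $C$ of a non-elementary $G_K$ with cyclic elementary subgroups must be trivial (if $C$ is infinite cyclic its normaliser is elementary, contradiction; if $C=\langle c\rangle$ is finite and $g\in G_K$ is hyperbolic, then $\langle c,g\rangle$ is virtually cyclic, hence cyclic, forcing $c=1$). Since $H_K$ is visibly non-trivial (e.g.\ the image of any $h\in R_{K-1}$ has order $n$ in $G_K$), it is non-elementary, and your deduction ``$H_K$ cyclic'' is already the desired contradiction. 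With this adjustment your proof matches the paper's in full.
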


\begin{proof}

The constants $\delta_1$, $\Delta_1$, $l_1$, $l_2$, $l_3$ and $n_0$ are given by the Induction lemma (see Lemma \ref{induction lemma}).
Up to increase $n_0$, we may also assume that $\frac {l_3}{\sqrt{n_0}} < 1$.
Let $1 \rightarrow H \rightarrow G \rightarrow F \rightarrow 1$ be a short exact sequence of groups, which satisfies the hypotheses of the theorem.
The strategy is to build by induction a family of short exact sequences ${1 \rightarrow H_k \rightarrow G_k \rightarrow F \rightarrow 1}$ with an action of $G_k$ on a hyperbolic space $X_k$, such that the direct limit $\dlim H_k$ is the Burnside group $H/H^n$.

\paragraph{Initialization.} We put $H_0 = H$, and $G_0 = G$.
Let $X_0$ be a proper, geodesic, simply-connected, hyperbolic space on which $G$ acts properly, co-compactly, by isometries.
Take for instance the Rips' polyhedron of $G$ (see \cite[Chap 5.]{CooDelPap90}).
Up to renormalize $X_0$, we may assume that:
\begin{itemize}
	\item $X_0$ is $\delta_1$-hyperbolic,
	\item $\Delta\left(G_0,X_0\right) \leq \Delta_1$,
	\item $\left\{h \in H / \len[stable] h \leq l_1, h \text{ is not a proper power }\right\}$ contains a number of conjugacy classes bounded below by $2 \chi\left(G, \Q\right)$.
\end{itemize}
Since $G$ is a hyperbolic group, the injectivity radius of $H$ is positive (see \cite{Del96}).
Thus, up to increase one more time $n_0$, we may assume that $\rinj {H_0}{X_0} \geq \frac{l_2}{\sqrt {n_0}}$.
It follows that $\left( G_0, H_0, X_0\right)$ satisfies the induction assumptions for exponent $n_0$.

\paragraph{} Let $n$ be an odd integer larger than $n_0$. 
$\left( G_0, H_0, X_0\right)$ satisfies \emph{a fortiori} the induction assumptions for exponent $n$.

\paragraph{Induction.} Let $(G_k, H_k, X_k)$ satisfying the induction assumptions for exponent $n$.
We denote by $R_k$ the set of hyperbolic elements of $H_k$ which are not proper powers in $G_k$ and whose asymptotic translation lengths are smaller than $l_1$.
Let $N_k$ be the normal subgroup of $G_k$ generated by $\left\{ h^n / h \in R_k\right\}$, $G_{k+1}$ the quotient $G_k/N_k$ and $H_{k+1}$ the image of $H_k$ by the canonical map $\pi_k : G_k \rightarrow G_{k+1}$.
By the Induction lemma, there exists a metric space $X_{k+1}$ such that $\left(G_{k+1}, H_{k+1}, X_{k+1}\right)$ satisfies the induction assumptions for the exponent $n$.
In this way, we obtain two sequences of groups $(H_k)$ and $(G_k)$ whose properties we want to study now.

\paragraph{Properties of $H_k$ and $G_k$.}

\begin{lemm}
\label{theo:short exact sequence}
	For all integers $k$, there exists a map $G_k \rightarrow F$ such that the following diagram is commutative.
	Moreover its rows are short exact sequences.
	\begin{center}
			\begin{tikzpicture}[description/.style={fill=white,inner sep=2pt},] 
				\matrix (m) [matrix of math nodes, row sep=2em, column sep=2.5em, text height=1.5ex, text depth=0.25ex] 
				{ 
					1	& H	& G	& F	& 1	\\
					1	& H_k	& G_k	& F	& 1	\\
				}; 
				\draw[>=stealth, ->] (m-1-1)	-- (m-1-2);
				\draw[>=stealth, ->] (m-1-2) -- (m-1-3);
				\draw[>=stealth, ->] (m-1-3) -- (m-1-4);
				\draw[>=stealth, ->] (m-1-4) -- (m-1-5);
				\draw[>=stealth, ->] (m-2-1) -- (m-2-2);
				\draw[>=stealth, ->] (m-2-2) -- (m-2-3);
				\draw[>=stealth, ->] (m-2-3) -- (m-2-4);
				\draw[>=stealth, ->] (m-2-4) -- (m-2-5);
				\draw[>=stealth, ->] (m-1-2)	-- (m-2-2);
				\draw[>=stealth, ->] (m-1-3)	-- (m-2-3);
				\draw[double distance=1.5pt] (m-1-4) -- (m-2-4);
			\end{tikzpicture} 
		\end{center}
\end{lemm}

\begin{proof}
	Following the construction of the groups $H_k$ and $G_k$, we prove this lemma by induction on $k$.
	The result is obvious for $k=0$.
	Consider now an integer $k$ for which the lemma holds.
	The subgroup $N_k$ is generated by elements of $H_k$.
	It follows that $N_k$ is contained in $H_k$, which is also the kernel of the map $G_k \rightarrow F$.
	Hence $G_k \rightarrow F$ induces a map from $G_{k+1} = G_k / N_k$ to $F$ such that the following diagram is commutative.
	\begin{center}
		\begin{tikzpicture}[description/.style={fill=white,inner sep=2pt},] 
			\matrix (m) [matrix of math nodes, row sep=1em, column sep=3em, text height=1.5ex, text depth=0.25ex] 
				{ 
					G_k		& 		\\
								&  F	\\
					G_{k+1}	&		\\
				}; 
			\path[>=stealth, ->] 
			(m-1-1)		edge node[auto] {} (m-2-2) 
			(m-3-1) 		edge node[auto] {} (m-2-2)
			(m-1-1)		edge node[auto, left] {$\pi_k$} (m-3-1);
		\end{tikzpicture} 
	\end{center}
	By definition, $H_{k+1}$ is the image of $H_k$ by the projection $\pi_k$.
	Since $\pi_k$ is onto, $H_{k+1}$ is the kernel of the map $G_{k+1} \rightarrow F$.
	Consequently, the following diagram commutes and its rows are short exact sequences.
	\begin{center}
		\begin{tikzpicture}[description/.style={fill=white,inner sep=2pt},] 
			\matrix (m) [matrix of math nodes, row sep=2em, column sep=2.5em, text height=1.5ex, text depth=0.25ex] 
			{ 
				1	& H_k			& G_k			& F	& 1	\\
				1	& H_{k+1}	& G_{k+1}	& F	& 1	\\
			}; 
			\draw[>=stealth, ->] (m-1-1)	-- (m-1-2);
			\draw[>=stealth, ->] (m-1-2) -- (m-1-3);
			\draw[>=stealth, ->] (m-1-3) -- (m-1-4);
			\draw[>=stealth, ->] (m-1-4) -- (m-1-5);
			\draw[>=stealth, ->] (m-2-1) -- (m-2-2);
			\draw[>=stealth, ->] (m-2-2) -- (m-2-3);
			\draw[>=stealth, ->] (m-2-3) -- (m-2-4);
			\draw[>=stealth, ->] (m-2-4) -- (m-2-5);
			\draw[>=stealth, ->] (m-1-2)	-- (m-2-2);
			\draw[>=stealth, ->] (m-1-3)	-- (m-2-3) node[midway, left]{$\pi_k$};
			\draw[double distance=1.5pt] (m-1-4) -- (m-2-4);
		\end{tikzpicture} 
	\end{center}
	Thus the lemma holds for $k+1$.
\end{proof}

We would like now to compare the groups $H/H^n$ and $\dlim H_k$.
For notational convenience, we will denote by $h$ an element of $H$ as well as its images in $H_k$, $\dlim H_k$ or $H/H^n$.

\begin{lemm}
\label{theo: kernel map burnside to dlim}
	The kernel of the canonical map $H \rightarrow \dlim H_k$ is exactly $H^n$, the subgroup of $H$ generated by all $n$-th powers.
\end{lemm}

\begin{proof}
	Let $h$ be an element of $H \setminus \{ 1 \}$.
	For all integers $k$, we have 
	\begin{math}
		\len[stable, espace={X_k}] h 
		\leq \left( \frac {l_3}{\sqrt n}\right)^k \len[stable, espace={X_0}] h 
		\leq \left( \frac {l_3}{\sqrt{n_0}}\right)^k \len[stable, espace={X_0}] h
	\end{math}
	(see Lemma  \ref{induction lemma}).
	However, we chose $n_0$ in such a way that $\frac {l_3}{\sqrt{n_0}}<1$.
	It follows that there exists an integer $k$ such that $\len[stable, espace={X_k}] h  < \frac {l_2}{\sqrt n}$. 
	The group $G_k$  satisfies the small centralizers hypothesis (i.e. $G_k$ is non-elementary and its elementary subgroups are cyclic).
	Thus there exists an element $r$ of $G_k$, which is not a proper power, and a positive integer $m$ such that $h = r^m$.
	In particular $r^m$ belongs to the kernel of the map $G_k \rightarrow F$.
	Since $F$ is torsion-free, $r$ also belongs to $H_k$.
	Note that its asymptotic length in $X_k$ is smaller than $\frac {l_2}{\sqrt n}$. 
	By construction, the injectivity radius of $H_k$ on $X_k$ is larger than $\frac {l_2}{\sqrt n}$ (point \ref{enumerate:small cancellation assumptions} of Lemma \ref{induction lemma}). 
	Therefor $r$ is an elliptic isometry. 
	In particular, it has finite order dividing $n$ (point \ref{enumerate:small centralizers} of Lemma \ref{induction lemma}).
	It follows that the image of $h^n$ in $H_k$ is trivial.
	Hence $H^n$ is contained in the kernel of $H \rightarrow \dlim H_k$.
	
	\paragraph{} On the other hand, at each step of the construction, the kernel of the map $H_k \rightarrow H_{k+1}$ is generated by $n$-th powers of elements of $H_k$.
	It follows that the kernel of the morphism $H \rightarrow \dlim H_k$ is contained in $H^n$.
\end{proof}

\begin{lemm}
\label{theo: isomorphism burnside and dlim}
	The groups $H/H^n$ and $\dlim H_k$ are isomorphic.
\end{lemm}

\begin{proof}
	The map $H \rightarrow \dlim H_k$ is onto. 
	Thanks to Lemma \ref{theo: kernel map burnside to dlim}, its kernel is $H^n$. 
	It follows that it induces an isomorphism between $H/H^n$ and $\dlim H_k$.
\end{proof}

\begin{lemm}
\label{theo:injection F out burn}
	Let $f$ be a non trivial element of $F$.
	Let $g$ be a preimage of $f$ by the map $G \rightarrow F$.
	The conjugation by $g$ defines an automorphism of $H$ which induces a non trivial outer automorphism of $H/H^n$.
\end{lemm}

\begin{proof}
	Let $S$ be a finite generating set of $H$.
	We denote by $\phi$ the automorphism of $H$ defined as follows: for all $h \in H$, $\phi(h) = ghg^{-1}$.
	Assume that $\phi$ induces an inner automorphism of $H/H^n$.
	There exists $l \in H$ such that for all $h \in H$, $\phi(h)$ and $lhl^{-1}$ have the same image in $H/H^n$.
	We proved previously that $H/H^n$ and $\dlim H_k$ are isomorphic (see Lemma \ref{theo: isomorphism burnside and dlim}).
	Since $S$ is finite, there exists an integer $k$ such that, for all $s \in S$, $\phi(s)$ and $lsl^{-1}$ are equal in $H_k$.
	However $S$ is a generating set of $H$.
	Thus for all $h \in H$, $\phi(h)=ghg^{-1}$ and $lhl^{-1}$ are equal in $H_k$.
	We use now the following commutative diagram (see Lemma \ref{theo:short exact sequence}).
	\begin{center}
		\begin{tikzpicture}[description/.style={fill=white,inner sep=2pt},] 
			\matrix (m) [matrix of math nodes, row sep=2em, column sep=2.5em, text height=1.5ex, text depth=0.25ex] 
			{ 	
				1	& H	& G	& F	& 1	\\
				1	& H_k	& G_k	& F	& 1	\\
			}; 
			\draw[>=stealth, ->] (m-1-1)	-- (m-1-2);
			\draw[>=stealth, ->] (m-1-2) -- (m-1-3);	
			\draw[>=stealth, ->] (m-1-3) -- (m-1-4);
			\draw[>=stealth, ->] (m-1-4) -- (m-1-5);
			\draw[>=stealth, ->] (m-2-1) -- (m-2-2);
			\draw[>=stealth, ->] (m-2-2) -- (m-2-3);
			\draw[>=stealth, ->] (m-2-3) -- (m-2-4);
			\draw[>=stealth, ->] (m-2-4) -- (m-2-5);
			\draw[>=stealth, ->] (m-1-2)	-- (m-2-2);
			\draw[>=stealth, ->] (m-1-3)	-- (m-2-3);
			\draw[double distance=1.5pt] (m-1-4) -- (m-2-4);
		\end{tikzpicture} 
	\end{center}
	The image of $l^{-1}g$ in $G_k$ commutes with every element of $H_k$.
	The subgroup $H_k$ is a normal subgroup of $G_k$ which satisfies the small centralizers hypothesis.
	Therefore, $H_k$ is non-elementary.
	In particular, it contains a hyperbolic element $h$.
	Hence $h$ and $l^{-1}g$ generates an abelian subgroup of $G_k$ which has to be cyclic.
	There exists $(p,q) \in \Z^*\times \Z$ such that $\left(l^{-1}g\right)^p = h^q$ in $G_k$. 
	Using one more time the commutative diagram, we push this identity in $F$ and obtain $f^p = 1$.
	Since $F$ is torsion-free, $f$ is trivial.
	Contradiction.
\end{proof}

\paragraph{End of the proof of the main theorem.} The map $F \rightarrow \out H$ can be constructed as follows.
Let $f$ be an element of $F$ and $g$ a preimage of $f$ by $G \rightarrow F$.
The image of $f$ by the map $F \rightarrow \out H$ is exactly the outer automorphism of $H$ induced by the conjugation by $g$ in $G$.
The previous lemma is hence a reformulation of the following fact: the map $F \rightarrow \out H$ induces an injective homomorphism $F \hookrightarrow \out{H/H^n}$.
This remark ends the proof of the main theorem.
\end{proof} 

\def\cprime{$'$}  

\nocite{BesFei96} 
\nocite{BesFeiHan97a} 
\bibliography{bibliography} 
\bibliographystyle{alpha} 

\end{document}